\documentclass[a4paper,sort&compress,3p]{elsarticle}

\usepackage{mathtools}
\usepackage{amsthm}
\usepackage{amsmath}
\usepackage{amssymb}
\usepackage{bm}
\usepackage{subcaption}
\usepackage{enumitem}
\usepackage{stmaryrd}

\newtheorem{ass}{Assumption}
\newtheorem{prop}{Proposition}
\newtheorem{thm}{Theorem}
\newtheorem{defi}{Defintion}
\newtheorem{rmk}{Remark}
\newtheorem{problem}{Problem}

\newtheorem{con}{Conjecture}

% New commands
\newcommand{\f}[1]{\mathbf{#1}}
\newcommand{\norm}[1]{\left\lVert#1\right\rVert}
\newcommand{\jump}[1]{\left\llbracket#1\right\rrbracket}
\newcommand{\average}[1]{\left\{#1\right\}}

\journal{}

\begin{document}

\begin{frontmatter}

\title{Construction of approximate $C^1$ bases for isogeometric analysis on two-patch domains} 

\author[JKU]{Pascal Weinm\"uller}
\ead{pascal.weinmueller@jku.at}
\author[JKU]{Thomas Takacs}
\ead{thomas.takacs@jku.at}
\address[JKU]{Institute of Applied Geometry, Johannes Kepler University Linz, Altenberger Str. 69, 4040 Linz, Austria}

\begin{abstract}
In this paper, we develop and study approximately smooth basis constructions for isogeometric analysis over two-patch domains. One key element of isogeometric analysis is that it allows high order smoothness within one patch. However, for representing complex geometries, a multi-patch construction is needed. In this case, a $C^0$-smooth basis is easy to obtain, whereas $C^1$-smooth isogeometric functions require a special construction. Such spaces are of interest when solving numerically fourth-order PDE problems, such as the biharmonic equation and the Kirchhoff-Love plate or shell formulation, using an isogeometric Galerkin method.
 
With the construction of so-called analysis-suitable $G^1$ (in short, AS-$G^1$) parametrizations, as introduced in~\cite{collin2016analysis}, it is possible to construct $C^1$ isogeometric spaces which possess optimal approximation properties, cf.~\cite{kapl2019argyris}. These geometries need to satisfy certain constraints along the interfaces and additionally require that the regularity $r$ and degree $p$ of the underlying spline space satisfy $1 \leq r \leq p-2$. The problem is that most complex geometries are not AS-$G^1$ geometries. Therefore, we define basis functions for isogeometric spaces by enforcing approximate $C^1$ conditions following the basis construction from~\cite{kapl2017dimension}. For this reason, the defined function spaces are not exactly $C^1$ but only approximately.

We study the convergence behaviour and define function spaces that converge optimally under $h$-refinement, by locally introducing functions of higher polynomial degree and lower regularity. The convergence rate is optimal in several numerical tests performed on domains with non-trivial interfaces. While an extension to more general multi-patch domains is possible, we restrict ourselves to the two-patch case and focus on the construction over a single interface.
\end{abstract}

\begin{keyword}
fourth order partial differential equation \sep biharmonic equation \sep geometric continuity \sep $C^1$ continuity \sep approximate $C^1$ continuity
\end{keyword}
\end{frontmatter}

\section{Introduction}

Isogeometric Analysis (IGA), which is introduced in \cite{hughes2005isogeometric}, is a method for numerical simulation combining Finite Elemente Analysis (FEA) with Computer Aided Design (CAD). Within the IGA framework, the same spline functions are used for the exact representation of the CAD geometry and for the approximation of the FEA solution. IGA offers many advantages over classical (piecewise linear) finite elements by providing a basis of high smoothness and high polynomial degree. It is therefore ideal for solving high order partial differential equations (PDEs) over geometries comprised of a single patch. However, most geometries of interest are not given as a single patch, but are represented by a collection of patches forming a so-called multi-patch domain. Note that, in general, CAD models are composed of trimmed patches, cf.~\cite{marussig2018review}, which we do not address here.

In this paper, we assume that the geometry is represented by a two-patch parametrization where the patch parametrizations are matching along the interface. On such $C^0$-matching, two- or multi-patch domain, one can construct a $C^0$-smooth basis in a rather straighforward way, see e.g.~\cite{scott2014isogeometric,beiraodaveiga2014mathematical}. Enforcing higher order smoothness over multi-patch domains is however non-trivial, except in regular regions, as in~\cite{sangalli2016unstructured,buchegger2016adaptively}. As a consequence, standard basis constructions, which are only $C^0$-smooth over patch interfaces, cannot be used directly for solving high-order PDEs. In the following, we focus on fourth order problems, such as the biharmonic equation or a Kirchhoff-Love plate or shell formulation. There are several different methods to overcome the problem of reduced smoothness.

One way is to use a $C^0$ multi-patch basis and to solve the fourth order problem using Nitsche's method. This approach is studied e.g. in~\cite{apostolatos2014nitsche, ruess2014weak, guo2015nitsche}. Due to the reduced regularity, additional integral terms are derived over all interfaces and a penalty term is introduced to the problem statement. In this way, the $C^1$-error, i.e., the jump of the normal derivative across the interface, is penalized. Thus, using Nitsche's method, one has to derive a more complicated variational formulation, depending on the underlying PDE and discretization space, and one has to find a good choice for the penalty parameter, which is also not always straightforward.
Another approach is using the mortar method, see e.g.~\cite{horger2019hybrid} or~\cite{brivadis2015isogeometric} for $C^0$-coupling. The mortar method is based on constraint minimization where the coupling constraints are enforced using Lagrange multipliers. The correct choice of the discrete Lagrange multiplier space, leading to a stable formulation, is non-trivial. Moreover, the mortar method results in a saddle-point problem of larger size than the original problem.

A different possibility to solve forth order problems over multi-patch domains is to perform strong $C^1$-coupling, where the basis functions are coupled strongly across the interfaces, thus creating a $C^1$-smooth space over the multi-patch domain. The first work, which is following the idea of strong $C^1$-coupling, is the so-called bending strip method, see \cite{kiendl2009isogeometric, kiendl2010bending}. The idea was later generalized to construct $C^1$ bases over multi-patch domains as in~\cite{nguyen2014comparative,KaViJu15,collin2016analysis,mourrain_dimension_2016,kapl2017isogeometric,karvciauskas2016generalizing}. See also~\cite{hna2021} for a summary of related approaches.

We follow the constructions in~\cite{kapl2017dimension,kapl2019argyris}, which are based upon the findings in~\cite{KaViJu15,collin2016analysis}, where an explicit formula for constructing a $C^1$ basis at the interface is stated. As developed in~\cite{GrPe15}, the $C^1$ continuity of an isogeometric function is equivalent to the $G^1$ geometric continuity of its graph surface. This geometric continuity, cf.~\cite{Pe02}, yields so-called gluing data for each interface from which one can construct a $C^1$ basis. However, within the isogeometric framework, this construction is only possible for analysis-suitable $G^1$ (in short AS-$G^1$) geometries which were characterized in~\cite{collin2016analysis}. AS-$G^1$ is defined by having linear gluing data for each interface. This class of geometries contains for instance bilinear patches. However, for most geometries the gluing data is not linear.

Hence, all approaches based on strong $C^1$-coupling across interfaces have a significant problem: they can only be applied to certain geometries. If the geometry is not AS-$G^1$, one may locally raise the polynomial degree or reduce the continuity requirements to obtain a sufficiently large space. In~\cite{chan2018isogeometric, chan2019strong} the authors follow the former strategy, by constructing a $C^1$-smooth spline space at the interface using spline functions of a higher polynomial degree. A similar strategy is also proposed in~\cite{kapl2019isogeometric}. In this article, we intend to follow the latter strategy by properly reducing the continuity requirements.

In Figure~\ref{fig::motivation} we compare the approximation powers of different example parametrizations. While the geometry depicted in Subfigure~\ref{fig::ASGeo} is AS-$G^1$, the geometry in Subfigure~\ref{fig::nonASGeo} is not. Constructing the $C^1$-smooth space for both geometries and solving the biharmonic equation, we observe the following behaviour as plotted in Subfigure~\ref{fig::motivationerrorplot}: the discretization using the $C^1$-smooth space over the AS-$G^1$ geometry yields optimal convergence rates and the $C^1$-smooth space over the non AS-$G^1$ geometry does not allow any convergence. This lack of convergence can be circumvented as follows: instead of using the gluing data, we introduce so-called \textit{approximated gluing data} which is then used to construct the basis functions at the interface. Since the gluing data is now approximated, the resulting space is only approximately $C^1$-smooth. However, with the correct choice of approximated gluing data, the optimal convergence rate is restored. This can be observed in Subfigure~\ref{fig::motivationerrorplot}. In this paper, we focus on two-patch domains and extend the construction of $C^1$ basis functions from AS-$G^1$ geometries to general two-patch geometries.

\begin{figure}[h!]
  \begin{subfigure}[b]{0.223\textwidth}
  \centering
        \includegraphics[width=\textwidth]{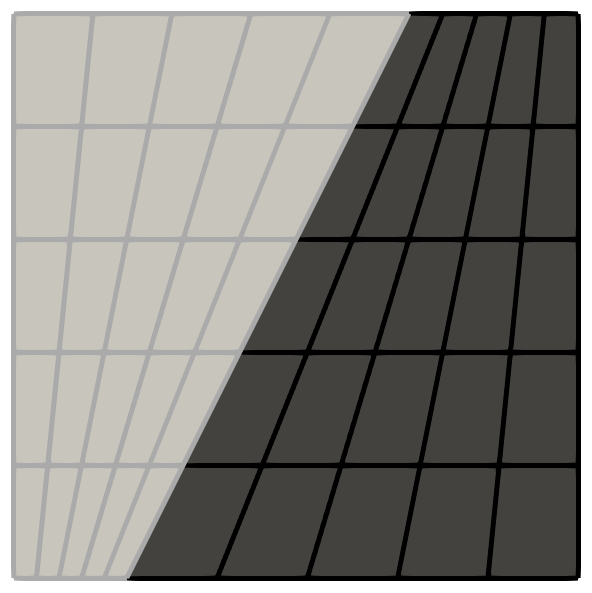}
        \caption{AS-$G^1$ geometry} \label{fig::ASGeo}
  \end{subfigure}
  \begin{subfigure}[b]{0.223\textwidth}
  \centering
        \includegraphics[width=\textwidth]{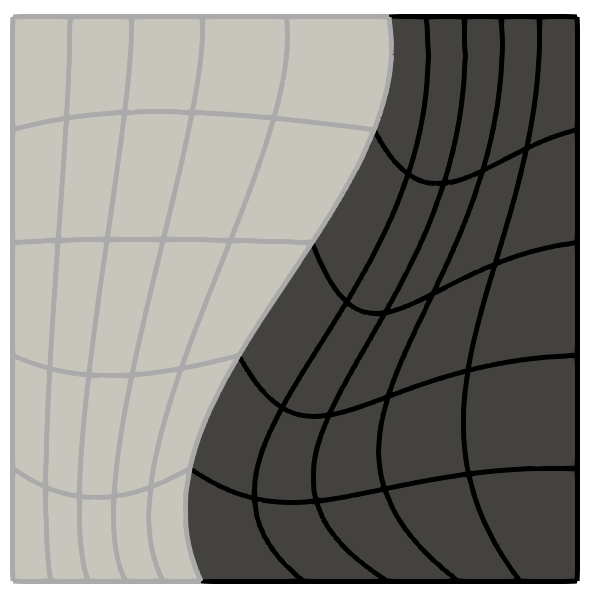}
        \caption{Non AS-$G^1$ geometry} \label{fig::nonASGeo}
  \end{subfigure}
  \begin{subfigure}[b]{0.55\textwidth}
  \centering
    \includegraphics[width=\textwidth]{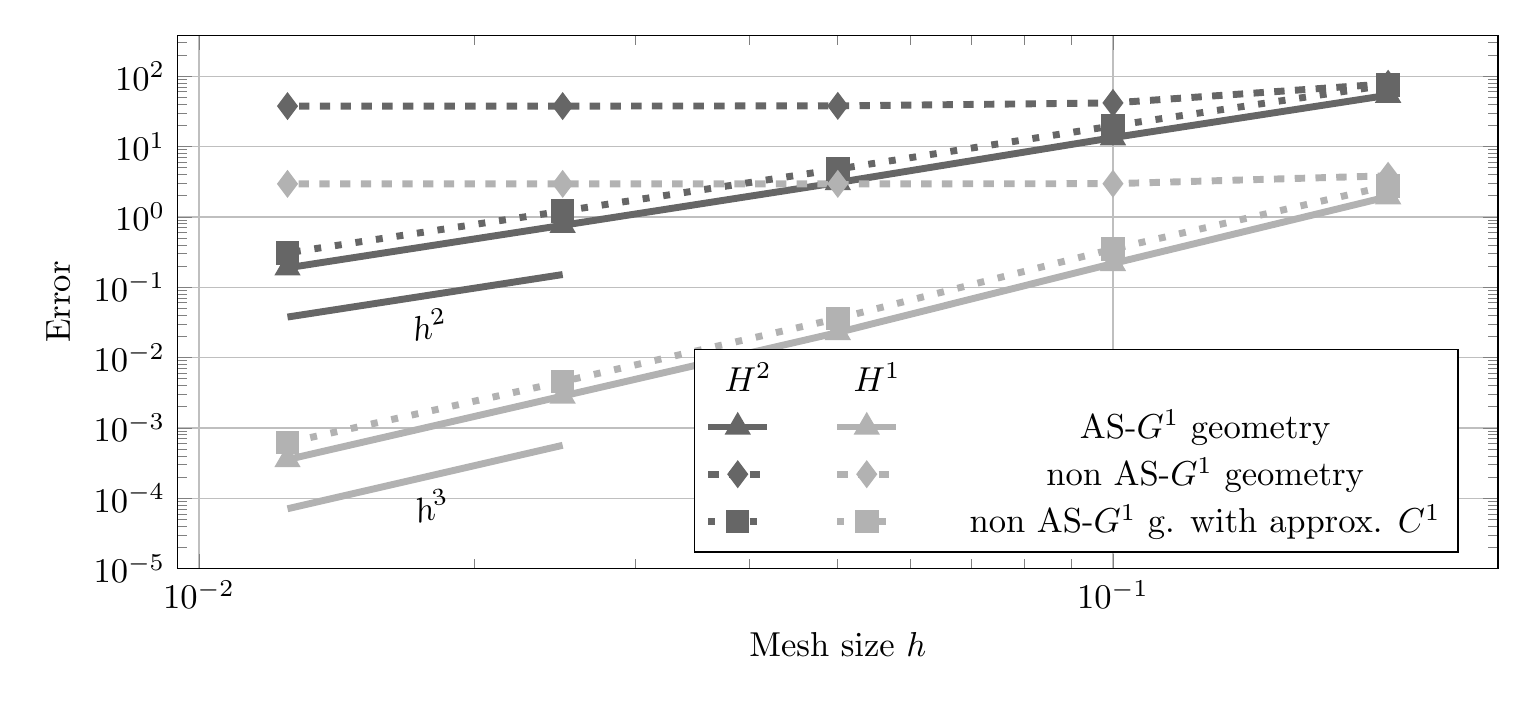}
    \caption{$H^1$- and $H^2$-errors of the discrete solutions for polynomial degree $p=3$.} \label{fig::motivationerrorplot}
  \end{subfigure}

     \caption{A numerical example to motivate the idea: we solve the biharmonic equation for the two parametrizations depicted in (a) and (b). In (c) we compare the resulting convergence rates. While the convergence rate is optimal for the AS-$G^1$ geometry, the $C^1$-smooth discretization over the non AS-$G^1$ geometry does not allow convergence. However, a properly constructed approximate $C^1$-smooth discretization over the same geometry again yields optimal rates.} \label{fig::motivation}
\end{figure}

The outline of the paper is as follows. We start with the definition of the model problem, more precisely the biharmonic equation, in Section~\ref{sec::modelproblem}. In Section~\ref{sec::bsplines}, we recall the definition of B-splines and introduce the notation we use. The description of the geometry mapping is given in Section~\ref{sec::twopatchdomain}. The main part of the basis construction is explained in Section~\ref{sec::isogeometricdiscretization}. Here, we define the gluing data and the spaces which are used for solving the biharmonic problem. In Section~\ref{sec::propertiesinterfacespaces} the properties of the approximate $C^1$ spaces are discussed. The discrete problem is stated in Section~\ref{sec::twopatchformulation} which is used to obtain the numerical results shown in Section~\ref{sec::numericalexperiments}.

\section{Model problem}\label{sec::modelproblem}

In this paper, we focus on the biharmonic equation. Let $\Omega$ be a bounded open subset of $\mathbb{R}^2$ with a sufficiently smooth (piecewise Lipschitz) boundary $\partial \Omega$ and a given source function $f$. We consider the fourth order problem
\begin{align}
  \Delta^2 \varphi = f \quad \text{ in } \Omega \label{eq::biharmonic}
\end{align}
with the boundary conditions
\begin{align}
  \varphi &= g_0 \quad \text{ on } \partial \Omega \quad \text{ and } \label{eq::boundary1}\\
  \Delta \varphi &= g_1 \quad \text{ on } \partial \Omega , \label{eq::boundary2}
\end{align}
where $g_0$ and $g_1$ are given. One can use different boundary conditions such as $\varphi = g_0$ and $\partial_n \varphi = g_1$ on $\partial \Omega$, but in this paper we focus on the boundary conditions stated in \eqref{eq::boundary1} and \eqref{eq::boundary2}. We assume that all functions $f$, $g_0$ and $g_1$ are sufficiently smooth, i.e., $f\in H^{-2}(\Omega)$, $g_0 \in H^{-3/2}(\partial\Omega)$, $g_1 \in H^{-1/2}(\partial\Omega)$. Note that~\eqref{eq::boundary1} is enforced as an essential boundary condition, which can be eliminated. Hence, we assume from now on that the problem is homogeneous. Let 
\begin{equation*}
\begin{array}{lll}
\mathcal{V}_0 &\coloneqq H^2 (\Omega) \cap H^1_0 (\Omega) = \{ \psi \in H^2 (\Omega) \;|\; \psi = 0 \text{ on } \partial \Omega \}.
\end{array}
\end{equation*}
The weak formulation of the problem (\ref{eq::biharmonic})-(\ref{eq::boundary2}) is the following.
\begin{problem}\label{problem:model-problem}
Find $\varphi \in \mathcal{V}_0$ such that
\begin{align}
  a(\varphi, \psi ) = \langle F,\psi \rangle, \qquad \forall \; \psi \in \mathcal{V}_0, \label{eq::weakformulation}
\end{align}
where the bilinear form is defined as 
\begin{align*}
a(\varphi,\psi) & \coloneqq \int_{\Omega} \Delta \varphi \; \Delta \psi \; \mathrm{d}\f{x}
\end{align*}
and the right hand side as
\begin{align*}
\langle F,\psi \rangle  & \coloneqq \int_{\Omega} f \psi  \; \mathrm{d}\f{x} + \int_{\partial \Omega} g_1 \partial_{\f n} \psi \; \mathrm{d}\f{s} ,
\end{align*}
where $\partial_{\f n}$ is the normal derivative at the boundary.
\end{problem}

We solve Problem~\ref{problem:model-problem} using an isogeometric discretization, cf.~\cite{hughes2005isogeometric}. As it is common in IGA, we assume that the domain $\Omega$ is parametrized with B-spline patches. A discretization space can then be defined on $\Omega$ based on the same B-spline space as the geometry parametrization. Thus, we recall the definition of B-splines in the next section. A more detailed introduction to IGA can be found, e.g., in \cite{beiraodaveiga2014mathematical,CottrellBook}.

\section{B-spline spaces} \label{sec::bsplines}

In this section, a brief overview of B-splines is stated. Given positive integers $p$, $r$ and $n$, and an (uniform) mesh, with mesh size $h = 1/n$,
we define the (univariate) spline space $\mathcal{S} (p,r,h)$ of degree $p$ and regularity $r$, with $r<p$, as
\begin{equation}
\mathcal{S} (p,r,h)  \coloneqq \{ f \in C^r([0,1]) : f|_{(i h,(i+1) h)} \in \mathbb{P}^p \mbox{ for all }i=0,\ldots,n-1 \}. \label{eq::uniform-spline-space}
\end{equation}
The open knot vector $\Xi \coloneqq ( \xi_1,...,\xi_{N+p+1} )$ with $N=p+1+(p-r)(n-1)$ satisfies 
\begin{equation*}  
\Xi =
(\underbrace{0,\ldots,0}_{(p+1)-\mbox{\scriptsize times}},
\underbrace{\textstyle h,\ldots ,h}_{(p-r) - \mbox{\scriptsize times}}, 
\underbrace{\textstyle 2h,\ldots ,2h}_{(p-r) - \mbox{\scriptsize times}},\ldots, 
\underbrace{\textstyle (n-1)h,\ldots ,(n-1)h}_{(p-r) - \mbox{\scriptsize times}},
\underbrace{1,\ldots,1}_{(p+1)-\mbox{\scriptsize times}}).
\end{equation*}
We mention here that, for simplicity, the regularity is assumed to be the same at all interior knots and consequently all the knots have the same multiplicity. Given the knot vector $\Xi$ and a polynomial degree $p$ one can define the B-spline functions denoted as $b_i$, $1 \leq i \leq N$, using the Cox--de Boor recursion, see~\cite{prautzsch2002}. We have
\begin{align*}
\mathcal{S} (p,r,h)  = \text{span} \{ b_i, \; i = 1,...,N \}.
\end{align*}
A conforming discretization of Problem~\ref{problem:model-problem} requires $H^2$-regularity of the discretization space. To this end, we assume that the underlying spline space is in $H^2$, which is equivalent to $C^1$-smoothness.
\begin{ass}[Minimum regularity] \label{ass::minimumregularity}
  We assume that the spline space $\mathcal{S} (p,r,h)$ is at least $C^1$-smooth, i.e., $r \geq 1$.
\end{ass}
The definitions can be extended to the two-dimensional case by means of a tensor-product structure. Let $\f h = \left( h_1, h_2 \right)$ be the pair of (uniform) mesh-sizes and $\left( \Xi_1, \Xi_2 \right)$ be the two knot vectors, one for each direction. Additionally, we define the bivariate B-spline functions as $b_{\boldsymbol{i}} \coloneqq b_{1,i_1} \otimes b_{2,i_2}$ where any univariate B-spline function has the degree $p_{s}$ and the regularity $r_{s}$, $s \in \{1,2\}$. The tensor-product spline space $ \boldsymbol{ \mathcal{S} }  (\f p, \f r, \f h) $ is spanned by the bivariate B-spline functions, yielding
\begin{align*}
 \boldsymbol{ \mathcal{S} } (\f p, \f r, \f h) = \mathcal{S}_1 (p_1,r_1,h_1) \otimes \mathcal{S}_2 (p_2,r_2,h_2) = \text{span} \{ b_{\boldsymbol{i}} \}_{(1,1)\leq \f i\leq (N_1,N_2)},
\end{align*}
where $\f p=(p_1,p_2)$ and $\f r=(r_1,r_2)$.

\section{The two-patch geometry} \label{sec::twopatchdomain}

We assume that the domain $\Omega$ is given as the union of two non-overlapping subdomains, i.e., we have open subdomains $\Omega^{(S)}$, for $S \in \{L,R\}$, such that
\begin{align*}
  \overline{\Omega} = \bigcup_{S \in \{L,R\}} \overline{\Omega}^{(S)}, \quad \Omega^{(L)} \cap \Omega^{(R)} = \emptyset,
\end{align*}
with a single interface $\Gamma$ which is defined as 
\begin{align*}
  \Gamma = \partial \Omega^{(L)} \cap \partial \Omega^{(R)} \cap \Omega.
\end{align*}
Here $\overline{\Omega}^{(S)}$ denotes the closure of ${\Omega}^{(S)}$. In this paper, we always consider the notation $S \in \{L,R\}$ where $L$ denotes the left patch and $R$ the right patch. Furthermore, each $\Omega^{(S)}$ is a spline patch with the geometry mapping $\f F^{(S)} \in (\boldsymbol{ \widehat{\mathcal{S}} }^{(S)})^2 $ with 
\begin{align*}
  \f F^{(S)} : \widehat{\Omega} \to \overline{\Omega}^{(S)},
\end{align*}
where $\boldsymbol{ \widehat{\mathcal{S}} }^{(S)} = \boldsymbol{ \mathcal{S} } (\hat{\f p}^{(S)}, \hat{\f r}^{(S)}, \hat{\f h}^{(S)})$ is a tensor-product spline space as defined in Section~\ref{sec::bsplines} and $\widehat{\Omega} = [0,1]^2$.  We assume that the mappings are regular, i.e.,
\[
 | \det (\nabla \f F^{(S)})(u,v) | \geq C > 0, \qquad \forall \; (u,v)\in\widehat\Omega.
\]
Moreover, we assume that the patch interface is along an entire edge of both patches. Without loss of generality, on each patch the interface is parametrized by $(u,v)\in \{0\}\times (0,1)$, which can be achieved by a simple reparametrization (a combination of translation, rotation and symmetry). Furthermore, we assume that the patch parametrizations agree along the interface, summarized in the following.
\begin{ass}[$C^0$-conformity at the interface] \label{ass::c0conformityatinterface}
The parametrizations of the two
patches meet $C^0$ along the interface, i.e.,
  \begin{align}
  \f F^{(L)} (0,v) = \f F^{(R)} (0,v) \quad \forall \; v \in [0,1]. \label{eq::c0assumption}
\end{align}
\end{ass}
For simplicity, we assume $\widehat{\mathcal{S}}_2^{(L)} = \widehat{\mathcal{S}}_2^{(R)}$. As a consequence, the left and the right patch share the same tangential derivative
\begin{align}
  \boldsymbol{t}(v) \coloneqq \partial_v \f F^{(L)} (0,v) = \partial_v \f F^{(R)} (0,v) \label{eq::tangentvector}
\end{align}
along the interface. Hence, the unit tangent vector is given by
\begin{align}
  \boldsymbol{t}_0(v) \coloneqq \frac{\boldsymbol{t}(v)}{\tau(v)}, \label{eq::unittangentvector}
\end{align}
where $\tau(v) = \|\boldsymbol{t}(v)\|$. We denote the outward pointing unit normal vector to $\Omega^{(S)}$ by $\f n^{(S)}$. Along the interface $\Gamma$, one can compute the normal vector $\f n = \f n^{(L)} = - \f n^{(R)} $, which satisfies the following proposition.
\begin{prop} \label{prop::normalvector}
Given the geometry mapping $\f F^{(S)}$, the normal vector can be expressed as
\begin{align}
 \f n = \frac{1}{\det(\partial_u \f F^{(S)}, \f t_0 )} \left(\partial_u \f F^{(S)} - \left(\partial_u \f F^{(S)} \cdot \f t_0 \right) \; \f t_0 \right). \label{eq::normalvector}
\end{align}
\end{prop}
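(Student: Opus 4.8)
The plan is to verify formula~\eqref{eq::normalvector} directly by checking the two defining properties of the outward unit normal: it must be orthogonal to the tangent direction $\f t_0$ along the interface, and it must have unit length and point outward. First I would observe that along the interface the two-dimensional tangent plane to $\Omega^{(S)}$ at a point $\f F^{(S)}(0,v)$ is spanned by the two partial derivatives $\partial_u \f F^{(S)}(0,v)$ and $\partial_v \f F^{(S)}(0,v) = \f t(v)$, equivalently by $\partial_u \f F^{(S)}$ and $\f t_0$; regularity of $\f F^{(S)}$ guarantees these are linearly independent, so $\det(\partial_u \f F^{(S)}, \f t_0) \neq 0$ and the expression is well-defined.

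Next I would denote the right-hand side of~\eqref{eq::normalvector}, before dividing by the determinant, by $\f m \coloneqq \partial_u \f F^{(S)} - (\partial_u \f F^{(S)} \cdot \f t_0)\,\f t_0$. This is simply the component of $\partial_u \f F^{(S)}$ orthogonal to $\f t_0$, i.e. the Gram--Schmidt projection; hence $\f m \cdot \f t_0 = \partial_u \f F^{(S)} \cdot \f t_0 - (\partial_u \f F^{(S)} \cdot \f t_0)(\f t_0 \cdot \f t_0) = 0$ since $\|\f t_0\| = 1$. So $\f n$ as defined is indeed orthogonal to the interface tangent. It remains to check the normalization. I would compute $\|\f m\|^2 = \|\partial_u \f F^{(S)}\|^2 - (\partial_u \f F^{(S)} \cdot \f t_0)^2$, which by the two-dimensional identity relating dot and cross products equals $\det(\partial_u \f F^{(S)}, \f t_0)^2$ (the squared area of the parallelogram spanned by $\partial_u \f F^{(S)}$ and the unit vector $\f t_0$). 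Therefore $\|\f m\| = |\det(\partial_u \f F^{(S)}, \f t_0)|$, and dividing by $\det(\partial_u \f F^{(S)}, \f t_0)$ yields a unit vector.

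Finally I would address the sign, i.e. that dividing by the signed determinant (rather than its absolute value) produces the \emph{outward} normal for $S = L$ and consistently $\f n = \f n^{(L)} = -\f n^{(R)}$. Since the interface is parametrized by $(u,v) \in \{0\} \times (0,1)$ with the patch $\widehat\Omega = [0,1]^2$ lying in the direction of increasing $u$, the inward direction corresponds to $+\partial_u \f F^{(S)}$ and the outward normal points opposite; tracking the orientation convention on the two patches, the factor $1/\det(\partial_u \f F^{(S)}, \f t_0)$ carries exactly the sign needed so that $\f n^{(L)}$ and $\f n^{(R)}$ come out opposite, consistent with $\f n^{(L)} = -\f n^{(R)}$. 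I expect this sign/orientation bookkeeping to be the only subtle point; the orthogonality and normalization are routine Gram--Schmidt and the standard $\|a\|^2\|b\|^2 - (a\cdot b)^2 = \det(a,b)^2$ identity in $\mathbb{R}^2$. The rest is direct substitution.
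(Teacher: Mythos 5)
Your proposal is correct and is essentially the paper's own argument viewed from the other direction: the paper writes $\partial_u \f F^{(S)} = \lambda \f t_0 + \mu \f n$ and solves $\lambda = \partial_u \f F^{(S)}\cdot\f t_0$, $\mu^2 = \det(\partial_u \f F^{(S)},\f t_0)^2$, while you verify directly that the Gram--Schmidt remainder is orthogonal to $\f t_0$ with norm $|\det(\partial_u \f F^{(S)},\f t_0)|$ --- the same identity and the same appeal to the outward-orientation convention $\f n=\f n^{(L)}=-\f n^{(R)}$ to fix the sign. No gap; the level of detail on the sign matches the paper's.
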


\begin{proof}
Since $\f n \perp \f t_0$, the vector $\partial_u \f F^{(S)}$ can be  uniquely described as a linear combination of $\f n$ and $\f t_0$, i.e.,
\begin{align*}
\partial_u \f F^{(S)}   = \lambda \f t_0 + \mu \f n, 
\end{align*}
where $\lambda$ and $\mu$ are the two unknown factors. Using the vector projection of $\partial_u \f F^{(S)}$ onto $\f t_0$ gives us the first unknown
\[
 \lambda = \partial_u \f F^{(S)} \cdot \f t_0.
\]
Since $\f n$ and $\f t_0$ are unitary vectors, we have
\[
  \mu^2 = \| \partial_u \f F^{(S)} \|^2 \|\f t_0\|^2 - (\partial_u \f F^{(S)} \cdot \f t_0)^2 = \det(\partial_u \f F^{(S)}, \f t_0)^2.
\]
Then, the desired result, including the sign of $\mu$, follows directly from the definition of $\f n = \f n^{(L)} = - \f n^{(R)} $.
\end{proof}
Figure \ref{fig::geometrical_mapping} gives an overview of the domain setting.
\begin{figure}[ht!]
    \centering
    \includegraphics[scale=1]{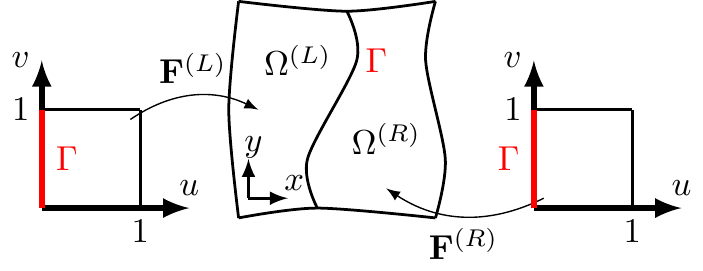}
    \caption{Example of the general setting for the two-patch parametrization.}
    \label{fig::geometrical_mapping}
\end{figure}

\section{The isogeometric discretization} \label{sec::isogeometricdiscretization}

In this section, we define isogeometric functions over two-patch domains and discuss their continuity conditions. As developed in~\cite{KaViJu15,collin2016analysis, kapl2017dimension, kapl2019argyris}, the class of analysis-suitable $G^1$ (in short AS-$G^1$) geometries allows optimal approximation. In~\cite{kapl2019argyris}, the Argyris isogeometric space $\mathcal{A}$ is introduced as the direct sum of the single patch-interior, edge and vertex components. Its name is derived from the fact that the vertex space is obtained from interpolating $C^2$-data at every vertex, similar to the Argyris finite element. Moreover, the edge space can be split in degrees of freedom for function values as well as normal derivative (or general crossing derivative) values along the interface. 

However, the construction for the space $\mathcal{A}$ is only possible for certain geometries, i.e., for AS-$G^1$ parametrizations, which are discussed in Section~\ref{sec::propertiesinterfacespaces}, Definition~\ref{def::asg1}. For general geometries, a different approach to construct an (approximate) $C^1$ isogeometric space is introduced in this section and discussed in more detail in Section~\ref{sec::propertiesinterfacespaces}. Since only two-patch domains are considered, a slight modification of the structure of the space is performed: there is no need to define separate vertex spaces, thus the space $\mathcal{A}$ is split into the patch-interior spaces and the interface space containing all functions that have non-vanishing trace or crossing derivative at the entire interface, including the vertices.

\subsection{Spline spaces of mixed regularity}

In Definition~\ref{defi::subsetwithhat} we introduce spline spaces of mixed regularity. Following the definition in~\eqref{eq::uniform-spline-space}, the uniform 
spline space $\mathcal{S} (p, r, h)$ has $n = 1 / h$ polynomial segments with the distinct inner knots
\[
 \{ h, 2h, ..., (n-1)h \}.
\]
The regularity of each knots is given by ${r}$. Similarly, the spline space $ \mathcal{S} (\hat{p}, \hat{r}, \hat{h})$ is $C^{\hat{r}}$-smooth across the inner knots
\[
 \{ \hat{h}, 2\hat{h}, ..., (\hat{n}-1)\hat{h} \},
\]
with $\hat{h} = k \cdot h$ and $\hat{n} = n / k$ for some positive integer $k$. We construct isogeometric functions based on the space $\mathcal{S} (p, (r,\hat{r}), (h,\hat{h}))$, satisfying 
\[
 \mathcal{S} (p, r, h) \subseteq \mathcal{S} (p, (r,\hat{r}), (h,\hat{h})) \quad \mbox{ and }\quad \mathcal{S} (\hat{p}, \hat{r}, \hat{h}) \subseteq \mathcal{S} (p, (r,\hat{r}), (h,\hat{h})),
\]
for $p\geq \hat{p}$, which is defined in the following.
\begin{defi} \label{defi::subsetwithhat}
 Let $\hat{h} = k \cdot h$, with $k\in \mathbb{N}^+$. We denote by 
\begin{align*}
 \mathcal{S} (p, (r,\hat{r}), (h,\hat{h})) 
\end{align*}
the space of splines that are polynomial of degree $p$ on each interval $(ih,(i+1)h)$, for $i=0,\ldots,n-1$, and across each inner knot $ih$, for $i=1,\ldots,n-1$ continuous of order 
\begin{align*}
 \begin{cases}
       \hat{r}, & \text{if } \exists \; j \in \mathbb{N} \text{ such that } ih = j\hat{h}, \\ 
       r, & \text{otherwise}.\\
 \end{cases}
\end{align*}
\end{defi}

\begin{figure}[h!]
\centering
 \begin{center}
  \includegraphics[width=0.5\textwidth]{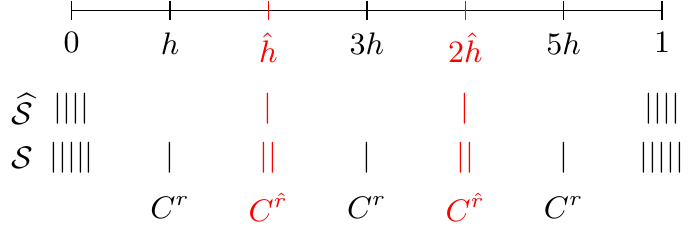}
\end{center}
\caption{We show an example of a knot vector corresponding to the space $\mathcal{S} = \mathcal{S} (4, (3,2), (1/6,1/3))$. Let us assume that the geometry is from the space $\widehat{\mathcal{S}} = {\mathcal{S}} (\hat{p} , \hat{r}, \hat{h}) = {\mathcal{S}} (3, 2, 1/3)$. Following Definition~\ref{defi::subsetwithhat}, the space $\mathcal{S}$ has regularity $r=3$ at the inner knots $h$, $3h$ and $5h$ and regularity $\hat{r}=2$ at the inner knots $2h = \hat{h}$ and $4h = 2\hat{h}$.} \label{fig::spacesExample}
\end{figure}
Note that the notation $ \mathcal{S}(p, (r,\hat{r}), (h,\hat{h})) $ simplifies to $\mathcal{S}(p, r, h) $ if the geometry space $\mathcal{S} (\hat{p}, \hat{r}, \hat{h})$ has no inner knots, i.e., $\hat{h}=1$, or if $r=\hat{r}$. In Figure~\ref{fig::spacesExample} an example is depicted.

Corresponding to the spaces $\boldsymbol{ \widehat{\mathcal{S}} }^{(S)}$ containing the geometry mappings, we define the discretization spaces as $\boldsymbol{ \mathcal{S} }^{(L)}$ and $\boldsymbol{ \mathcal{S} }^{(R)}$, with
\[
  \boldsymbol{ \mathcal{S} }^{(S)} = \mathcal{S}_1^{(S)} \otimes \mathcal{S}_2^{(S)} = \mathcal{S} (p_1^{(S)}, (r_1^{(S)},\hat{r}_1^{(S)}), (h_1^{(S)},\hat{h}_1^{(S)})) \otimes \mathcal{S}(p_2^{(S)}, (r_2^{(S)},\hat{r}_2^{(S)}), (h_2^{(S)},\hat{h}_2^{(S)})),      \quad S \in \{L,R\}.
\]
For the sake of simplicity, we only consider discretizations where the spline spaces of both patches are matching at the interface. This leads to the following restriction on the discrete space.
\begin{ass}[Matching two-patch discretization]
  We assume that the discrete spline spaces are matching at the interface, i.e., we have $\mathcal{S}_2 \coloneqq \mathcal{S} (p_2, (r_2,\hat{r}_2), (h_2,\hat{h}_2)) = \mathcal{S}_2^{(L)} = \mathcal{S}_2^{(R)}$, with $p_2 \coloneqq   p_2^{(L)} = p_2^{(R)}$, $r_2 \coloneqq r_2^{(L)} = r_2^{(R)}$, $\hat{r}_2 \coloneqq \hat{r}_2^{(L)} = \hat{r}_2^{(R)}$, $h_2 \coloneqq h_2^{(L)} = h_2^{(R)}$ and $\hat{h}_2 \coloneqq \hat{h}_2^{(L)} = \hat{h}_2^{(R)}$.
  \label{ass::c0conforming}
\end{ass}
In the following we define the $C^0$ isogeometric space, followed by the definition of the $C^1$ isogeometric space. Section~\ref{sec::gluingdata} introduces the gluing data which is needed for the construction of the approximate $C^1$ basis, which is described in Section~\ref{sec::constructionapproximateC1basis}.

\subsection{The space of $C^0$ isogeometric functions}

The space of $C^0$ isogeometric functions on $\Omega$ is given as
\begin{align*}
  \mathcal{V}^0_h = \{ \varphi \in C^0(\Omega) \text{ such that } f^{(S)} = \varphi \circ \f F^{(S)} \in \boldsymbol{\mathcal{S}}^{(S)}, \; S \in \{ L,R \} \}.
\end{align*}
  Following standard FEM notation, we denote the discrete space with a subscript $h$. Here $h$ represents the mesh size of the spline spaces $\boldsymbol{\mathcal{S}}^{(S)}$, for $S \in \{ L,R \}$. The mesh size of $\mathcal{V}^0_h$ in physical space is always of the same order as $h$. 
As the spline spaces are matching at the interface, one can easily construct a basis for the $C^0$ isogeometric space since for each function with non-vanishing trace on one side there exists exactly one function on the other side having the same trace.

\subsection{The space of $C^1$ isogeometric functions}

The space of $C^1$ isogeometric functions on $\Omega$ is given as   $\mathcal{V}^1_h = \mathcal{V}^0_h \cap C^1(\Omega)$. One can describe the $C^1$ continuity of a function at the interface by studying the geometric continuity of its graph surface. The graph surface $\boldsymbol{\Sigma} \subset \Omega \times \mathbb{R}$ of an isogeometric function $\varphi : \Omega \to \mathbb{R}$ consists of the two graph surface patches
\[
\boldsymbol{\Sigma}^{(S)} : [0,1]^2 \to \Omega^{(S)} \times \mathbb{R}, \quad S \in \{ L,R \}, \quad \boldsymbol{\Sigma}^{(S)} (u,v) = \left(\f F^{(S)} (u,v),f^{(S)} (u,v) \right)^T. 
\]
Considering only regularly parametrized patches, one can see that the $C^1$ continuity of an isogeometric function at the interface $\Gamma$ is equivalent to the $G^1$ geometric continuity of its graph parametrization, i.e., there exists for each point at the interface a well-defined tangent plane to the graph surface. The tangent plane is well-defined if and only if the graph surfaces fullfill for all $v \in [0,1]$
\begin{align*}
   \det \left(\partial_u \boldsymbol{\Sigma}^{(L)} (0,v) , \; \partial_u \boldsymbol{\Sigma}^{(R)} (0,v) , \; \partial_v \boldsymbol{\Sigma}^{(R)} (0,v)\right) = 0.
\end{align*}
This condition is known as $G^1$ (geometric) continuity, cf.~\cite{Pe02}. Note that $\partial_v \boldsymbol{\Sigma}^{(R)} (0,v) = \partial_v \boldsymbol{\Sigma}^{(L)} (0,v)$, due to the $C^0$ condition in Assumption~\ref{ass::c0conformityatinterface} together with the definition of $\mathcal{V}^0_h$. Figure~\ref{fig::g1continuity} illustrates the $G^1$ continuity. 
\begin{prop}
  An isogeometric function $\varphi \in \mathcal{V}^0_h$ belongs to $\mathcal{V}^1_h$ if and only if its graph surface $\boldsymbol{\Sigma}$ is geometrically continuous of order $1$, in short~$G^1$, at the interface $\Gamma$. \label{prop::g1continuity}
\end{prop}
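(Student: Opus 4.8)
The plan is to prove the equivalence by a direct pointwise computation at the interface, translating the statement ``$\varphi$ is $C^1$ across $\Gamma$'' into a condition on the graph parametrizations $\boldsymbol{\Sigma}^{(S)}$ and then into the vanishing of the $3\times 3$ determinant. First I would recall that $\varphi\in\mathcal{V}^0_h$ already guarantees $C^0$-continuity of $\varphi$ across $\Gamma$, so the only thing to check is the agreement of the gradients of $\varphi$ from the two sides along $\Gamma$. Since each $\f F^{(S)}$ is a regular parametrization, the physical gradient $\nabla\varphi$ restricted to $\Omega^{(S)}$ is uniquely determined by the two directional derivatives $\partial_u(\varphi\circ\f F^{(S)})=\partial_u f^{(S)}$ and $\partial_v(\varphi\circ\f F^{(S)})=\partial_v f^{(S)}$ together with the push-forward by $(\nabla\f F^{(S)})^{-1}$. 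The tangential derivative is automatically shared because of Assumption~\ref{ass::c0conformityatinterface} (differentiating $\f F^{(L)}(0,v)=\f F^{(R)}(0,v)$ and $f^{(L)}(0,v)=f^{(R)}(0,v)$ in $v$), so $C^1$-continuity reduces to a single scalar condition relating the two transversal derivatives $\partial_u f^{(L)}(0,v)$ and $\partial_u f^{(R)}(0,v)$ for all $v\in[0,1]$.

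The next step is to show that this scalar condition is exactly equivalent to $G^1$-continuity of the graph surface. I would argue geometrically: the tangent plane to $\boldsymbol{\Sigma}^{(S)}$ at an interface point is spanned by $\partial_u\boldsymbol{\Sigma}^{(S)}(0,v)$ and $\partial_v\boldsymbol{\Sigma}^{(S)}(0,v)$, and because the underlying geometry patches are regular these two vectors are linearly independent, so the tangent plane is genuinely two-dimensional. The graph surface has a well-defined (single) tangent plane along $\Gamma$ if and only if the three vectors $\partial_u\boldsymbol{\Sigma}^{(L)}(0,v)$, $\partial_u\boldsymbol{\Sigma}^{(R)}(0,v)$ and the common tangential vector $\partial_v\boldsymbol{\Sigma}^{(L)}(0,v)=\partial_v\boldsymbol{\Sigma}^{(R)}(0,v)$ are linearly dependent, i.e.\ their determinant vanishes; since each pair $\{\partial_u\boldsymbol{\Sigma}^{(S)},\partial_v\boldsymbol{\Sigma}^{(S)}\}$ is already independent, the determinant condition is equivalent to $\partial_u\boldsymbol{\Sigma}^{(R)}(0,v)$ lying in the plane spanned by $\partial_u\boldsymbol{\Sigma}^{(L)}(0,v)$ and $\partial_v\boldsymbol{\Sigma}^{(L)}(0,v)$, which is the tangent plane from the left.

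Finally I would connect the two pictures. Writing $\boldsymbol{\Sigma}^{(S)}=(\f F^{(S)},f^{(S)})^T$, the first two components of the determinant condition involve only the geometry: $\det(\partial_u\f F^{(L)},\partial_u\f F^{(R)},\partial_v\f F^{(L)})$ expanded along the last row recovers a linear relation of the form $\partial_u\f F^{(R)}(0,v)=\alpha(v)\,\partial_u\f F^{(L)}(0,v)+\beta(v)\,\f t(v)$ with $\alpha,\beta$ determined by the geometry (the ``gluing data''), using regularity to see $\alpha(v)\neq 0$. Substituting this into the third-component (scalar) part of the determinant condition yields precisely the relation $\partial_u f^{(R)}(0,v)=\alpha(v)\,\partial_u f^{(L)}(0,v)+\beta(v)\,\partial_v f^{(L)}(0,v)$, which — combined with the already-established agreement of the tangential derivatives and the push-forward formula for $\nabla\varphi$ — is equivalent to $\nabla\varphi$ being continuous across $\Gamma$, hence to $\varphi\in\mathcal{V}^1_h$. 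The main obstacle, and the step requiring the most care, is the bookkeeping that shows the two a priori different scalar conditions (equality of physical gradients versus vanishing of the graph determinant) really coincide for \emph{all} $v$ simultaneously; this hinges on using regularity of $\f F^{(S)}$ to guarantee the relevant denominators and leading coefficients never vanish, so that the $C^0$ datum plus one transversal datum determine $\varphi$ uniquely on each side. I would also note that this equivalence is essentially the content of~\cite{GrPe15}, so the proof can be kept brief by citing it for the general principle and only spelling out the two-patch specialization.
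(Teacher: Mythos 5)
Your argument is correct, but note that the paper itself does not prove Proposition~\ref{prop::g1continuity} at all: it states the equivalence and defers to~\cite{KaViJu15,GrPe15,collin2016analysis}, so your direct two-patch computation is genuinely additional content rather than a re-derivation of an in-text proof. Your route is the natural elementary one: tangential derivatives match automatically from the $C^0$ datum, so $C^1$ reduces to one scalar condition on the transversal derivatives; on the graph side, regularity of $\f F^{(S)}$ makes $\{\partial_u\boldsymbol{\Sigma}^{(S)},\partial_v\boldsymbol{\Sigma}^{(S)}\}$ independent, so the vanishing of the $3\times 3$ determinant is equivalent to $\partial_u\boldsymbol{\Sigma}^{(R)}$ lying in the left tangent plane, and since $\{\partial_u\f F^{(L)},\f t\}$ is a basis of $\mathbb{R}^2$ the coefficients in the expansion $\partial_u\f F^{(R)}=a\,\partial_u\f F^{(L)}+b\,\f t$ are unique and purely geometric, which turns coplanarity into exactly the scalar relation $\partial_u f^{(R)}=a\,\partial_u f^{(L)}+b\,\partial_v f^{(L)}$ that expresses equality of the one-sided gradients. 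Two small points to tighten when writing this up: the expression ``$\det(\partial_u\f F^{(L)},\partial_u\f F^{(R)},\partial_v\f F^{(L)})$'' is an abuse (these are three vectors in $\mathbb{R}^2$); what you mean is the unique expansion above, with $a\neq 0$ because $\partial_u\f F^{(R)}\notin\operatorname{span}(\f t)$ by regularity (the sign $a<0$, i.e.\ the patches lying on opposite sides, is not needed for the equivalence itself, only for the normalization $\alpha^{(L)}<0<\alpha^{(R)}$ used later). Also state explicitly the standard fact you use implicitly at the end: a function that is continuous on $\Omega$, smooth up to the boundary on each closed subdomain, and whose one-sided gradients agree along the (Lipschitz) interface is $C^1$ on $\Omega$. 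With these two clarifications your proof is complete and consistent with the general principle of~\cite{GrPe15} that the paper cites.
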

For a discussion and generalizations of the equivalence above, see~\cite{KaViJu15,GrPe15,collin2016analysis}. 

\begin{figure}[h!]
 \centering
 \includegraphics[width=0.35\textwidth]{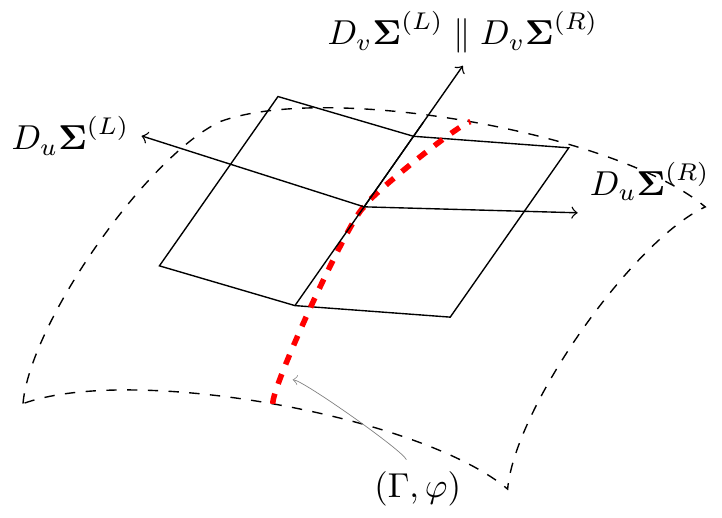}
 \caption{A visualization of $G^1$ continuity of the graph surface. To obtain $G^1$ continuity, the four vectors, two of them being equal, are coplanar (and span the tangent plane) for each point along the interface.} \label{fig::g1continuity}
\end{figure}

\subsection{The $C^1$ condition across the interface $\Gamma$}\label{sec::gluingdata}

If the graph surface is $G^1$ continuous, then there exist functions $\alpha^{(L)}$, $\alpha^{(R)}$, $\beta : [0,1] \to \mathbb{R}$, with
\begin{align*}
 \alpha^{(L)} (v) < 0 \quad \mbox{ and } \quad \alpha^{(R)} (v) > 0
\end{align*}
satisfied for all $v \in [0,1]$, such that 
\begin{align}
     \alpha^{(R)} (v) \partial_u \boldsymbol{\Sigma}^{(L)} (0,v) - \alpha^{(L)} (v) \partial_u \boldsymbol{\Sigma}^{(R)} (0,v) + \beta (v) \partial_v \boldsymbol{\Sigma}^{(R)} (0,v) = \boldsymbol{0} .\label{eq::exactgluingdatag1graph}
\end{align}
One can uniquely determine the functions $\alpha^{(L)}$, $\alpha^{(R)}$ and $\beta$ up to a common function $\gamma : [0,1] \to \mathbb{R}$ (with $\gamma (v) \neq 0$) by 
\begin{align}
 \left. \begin{array}{r l}
 \alpha^{(R)} (v) &= \gamma (v) \det \left( \partial_u \f F^{(R)} (0,v), \f t(v) \right), \\
 \alpha^{(L)} (v) &= \gamma (v) \det \left( \partial_u \f F^{(L)} (0,v), \f t(v) \right), \\
 \beta (v) &= \gamma (v) \det \left( \partial_u \f F^{(L)} (0,v), \partial_u \f F^{(R)} (0,v) \right), \end{array} \right. \label{def::alpha_beta}
\end{align}
where $\f t$ is defined as in~\eqref{eq::tangentvector}. Furthermore, there exist non-unique functions $\beta^{(L)}$, $\beta^{(R)} : [0,1] \to \mathbb{R}$ such that
  \begin{align}
   \beta (v) = \alpha^{(L)} (v) \beta^{(R)} (v) - \alpha^{(R)} (v) \beta^{(L)} (v) \label{eq::betaLbetaR}
  \end{align}
is satisfied for all $v \in [0,1]$. One possible choice for the functions is
\begin{align}
\left. \begin{array}{r l}
  \beta^{(L)} (v) &= \frac{\partial_u \f F^{(L)} (0,v) \cdot \f t_0(v)}{\tau (v)}, \\
  \beta^{(R)} (v) &= \frac{\partial_u \f F^{(R)} (0,v) \cdot \f t_0(v)}{\tau (v)}, \end{array} \right. \label{def::beta_S}
\end{align}
cf.~\cite[Proposition 1]{collin2016analysis}. The functions $\alpha^{(S)}$ and $\beta$ or more generally, the functions $\alpha^{(S)}$ and $\beta^{(S)}$ are called \emph{gluing data}. The first two lines of~\eqref{eq::exactgluingdatag1graph} are equivalent to
\begin{align}
     \alpha^{(R)} (v) \partial_u \f F^{(L)} (0,v) - \alpha^{(L)} (v) \partial_u \f F^{(R)} (0,v) + \beta (v) \partial_v \f F^{(R)} (0,v) = \boldsymbol{0} . \label{eq::exactgluingdatag1smooth}
\end{align}
Proposition \ref{prop::g1continuity} can be reformulated with the help of the gluing data.
\begin{prop}
  The isogeometric function $\varphi$ belongs to $\mathcal{V}^1_h$ if and only if the functions $f^{(S)} = \varphi \circ \f F^{(S)}$ fulfill 
\begin{align}
     \frac{1}{\alpha^{(L)} (v)} \left( \partial_u f^{(L)} (0,v) -\beta^{(L)} (v) \partial_v f^{(L)} (0,v) \right) = \frac{1}{\alpha^{(R)} (v)} \left( \partial_u f^{(R)} (0,v) - \beta^{(R)} (v) \partial_v f^{(R)} (0,v) \right). \label{eq::C1conditiontest}
\end{align}
\end{prop}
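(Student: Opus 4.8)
The plan is to deduce the statement from Proposition~\ref{prop::g1continuity}, which already tells us that $\varphi\in\mathcal{V}^1_h$ holds if and only if, for every $v\in[0,1]$,
\[
\det\left(\partial_u\boldsymbol{\Sigma}^{(L)}(0,v),\ \partial_u\boldsymbol{\Sigma}^{(R)}(0,v),\ \partial_v\boldsymbol{\Sigma}^{(R)}(0,v)\right)=0 .
\]
The task is therefore to rewrite this vanishing determinant, a condition on three vectors in $\mathbb{R}^3$, as the scalar identity~\eqref{eq::C1conditiontest}. Recall that $\partial_u\boldsymbol{\Sigma}^{(S)}(0,v)=\left(\partial_u\f F^{(S)}(0,v),\,\partial_u f^{(S)}(0,v)\right)^T$ and $\partial_v\boldsymbol{\Sigma}^{(R)}(0,v)=\left(\f t(v),\,\partial_v f^{(R)}(0,v)\right)^T$, so the projections of these three vectors onto their first two coordinates are exactly $\partial_u\f F^{(L)}(0,v)$, $\partial_u\f F^{(R)}(0,v)$ and $\f t(v)$, which satisfy the linear relation~\eqref{eq::exactgluingdatag1smooth} with coefficients $\alpha^{(R)}(v)$, $-\alpha^{(L)}(v)$, $\beta(v)$ determined by the geometry through~\eqref{def::alpha_beta}.

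The key step is a short piece of linear algebra. Since the parametrizations are regular and $\tau(v)=\|\f t(v)\|>0$, the pair $\partial_u\f F^{(L)}(0,v)$, $\f t(v)$ is linearly independent in $\mathbb{R}^2$; hence $\partial_u\boldsymbol{\Sigma}^{(L)}(0,v)$ and $\partial_v\boldsymbol{\Sigma}^{(R)}(0,v)$ are already linearly independent in $\mathbb{R}^3$, and the determinant above vanishes if and only if $\partial_u\boldsymbol{\Sigma}^{(R)}(0,v)$ belongs to their span. Dividing~\eqref{eq::exactgluingdatag1smooth} by $\alpha^{(L)}(v)$ (which is nonzero, as $\alpha^{(L)}(v)<0$) shows that $\partial_u\f F^{(R)}(0,v)$ is the unique linear combination $\tfrac{\alpha^{(R)}(v)}{\alpha^{(L)}(v)}\partial_u\f F^{(L)}(0,v)+\tfrac{\beta(v)}{\alpha^{(L)}(v)}\f t(v)$; comparing the third coordinates then shows that $\partial_u\boldsymbol{\Sigma}^{(R)}(0,v)$ lies in that span precisely when
\[
\alpha^{(R)}(v)\,\partial_u f^{(L)}(0,v)-\alpha^{(L)}(v)\,\partial_u f^{(R)}(0,v)+\beta(v)\,\partial_v f^{(R)}(0,v)=0 ,
\]
and conversely this scalar equation together with~\eqref{eq::exactgluingdatag1smooth} makes the three graph vectors coplanar. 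Thus $\varphi\in\mathcal{V}^1_h$ is equivalent to the displayed identity holding for all $v\in[0,1]$.

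Finally I would bring this identity into the symmetric form~\eqref{eq::C1conditiontest}: divide by the nonzero product $\alpha^{(L)}(v)\alpha^{(R)}(v)$, use~\eqref{eq::betaLbetaR} to write $\tfrac{\beta(v)}{\alpha^{(L)}(v)\alpha^{(R)}(v)}=\tfrac{\beta^{(R)}(v)}{\alpha^{(R)}(v)}-\tfrac{\beta^{(L)}(v)}{\alpha^{(L)}(v)}$, and replace one occurrence of $\partial_v f^{(R)}(0,v)$ by $\partial_v f^{(L)}(0,v)$, which is legitimate because $f^{(L)}(0,v)=f^{(R)}(0,v)$ for $\varphi\in\mathcal{V}^0_h$ (a consequence of Assumption~\ref{ass::c0conformityatinterface}) and differentiation in $v$ gives $\partial_v f^{(L)}(0,v)=\partial_v f^{(R)}(0,v)$. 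Collecting the $(L)$-terms on one side and the $(R)$-terms on the other yields~\eqref{eq::C1conditiontest}. I expect the only subtle point to be the linear-algebra step, where one must use that $\alpha^{(L)}(v)\neq0$ and that the relevant pair of geometry vectors is linearly independent, so that vanishing of the $3\times3$ determinant is genuinely equivalent to the single third-coordinate equation rather than being vacuous; the remaining manipulations are routine.
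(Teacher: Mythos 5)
Your argument is correct and is exactly the derivation the paper leaves implicit: the paper states this proposition without proof, as a reformulation of Proposition~\ref{prop::g1continuity} via the gluing data, and your route (vanishing determinant $\Leftrightarrow$ the third graph coordinate satisfies $\alpha^{(R)}\partial_u f^{(L)}-\alpha^{(L)}\partial_u f^{(R)}+\beta\,\partial_v f^{(R)}=0$, since the 2D relation~\eqref{eq::exactgluingdatag1smooth} holds identically and $\partial_u\boldsymbol{\Sigma}^{(L)},\partial_v\boldsymbol{\Sigma}^{(R)}$ are independent by regularity, followed by division by $\alpha^{(L)}\alpha^{(R)}\neq0$, the splitting~\eqref{eq::betaLbetaR} and $\partial_v f^{(L)}(0,v)=\partial_v f^{(R)}(0,v)$) is precisely the intended argument, cf.~\cite{collin2016analysis}. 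The linear-algebra step you flag as subtle is handled correctly, and all manipulations are reversible, so both directions of the equivalence are covered.
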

\begin{rmk} \label{rmk::generalspacegd}
  Note that for general patches $\f F^{(S)} \in \boldsymbol{ \widehat{\mathcal{S}} }^{(S)}$, assuming $\gamma(v)\equiv 1$, the functions $\alpha^{(S)}$ and $\beta$ fulfill $\alpha^{(S)} \in \mathcal{S}(2\hat{p}^{(S)}_2-1,\hat{r}^{(S)}_2-1,\hat{h}_2^{(S)})$ and $\beta \in \mathcal{S}(2\hat{p}_2^{(S)},\hat{r}_2^{(S)},\hat{h}_2^{(S)})$. The functions $\beta^{(L)}$ and $\beta^{(R)}$ are in general piecewise rational functions with regularity $\hat{r}^{(S)}_2-1$.
\end{rmk}
In order to obtain an optimal convergence rate for the gluing data, see Proposition~\ref{prop::bounds-E1-E2}, we need the following smoothness condition for the gluing data.
\begin{ass} \label{ass::smoothgluingdata}
 We assume that the gluing data satisfies $\alpha^{(S)} (v),\beta^{(S)} (v) \in C^1([0,1])$, for $S\in\{L,R\}$.
\end{ass}
Note that $\hat{r}^{(S)}_2 \geq 2$ is a sufficient condition for Assumption~\ref{ass::smoothgluingdata}.

\subsection{Construction of an approximate $C^1$ basis} \label{sec::constructionapproximateC1basis}

A basis construction for AS-$G^1$ two-patch geometries was developed in~\cite{kapl2017dimension}. In the following, we provide a variation of that approach, which extends the construction to general geometries by relaxing the smoothness condition.
Instead of constructing the $C^1$-smooth isogeometric space exactly, we define a basis of isogeometric functions which are only approximately $C^1$-smooth. We call the resulting space $\widetilde{\mathcal{V}}^{1}_h$ the \emph{approximate $C^1$ isogeometric space} on $\Omega$. It is defined as 
\begin{align*}
\widetilde{\mathcal{V}}^{1}_h = \widetilde{\mathcal{A}}_{\Gamma} \oplus \mathcal{A}^{(L)}_{\circ} \oplus \mathcal{A}^{(R)}_{\circ},
\end{align*}
where the \emph{interface space} $\widetilde{\mathcal{A}}_{\Gamma}$ is the space of functions which have non-vanishing traces or derivatives at the interface and $\mathcal{A}^{(S)}_{\circ}$ are the \emph{patch-interior spaces}, which have support on $\Omega^{(S)}$ and have vanishing value and normal derivative on $\Gamma$, hence, they satisfy $\mathcal{A}^{(S)}_{\circ}\subset \mathcal{V}^1_h$. The interface space $\widetilde{\mathcal{A}}_{\Gamma}$ is of the form 
\begin{align*}
\widetilde{\mathcal{A}}_{\Gamma} = \widetilde{\mathcal{A}}_{\Gamma,+} \oplus \widetilde{\mathcal{A}}_{\Gamma,-},
\end{align*}
where $\widetilde{\mathcal{A}}_{\Gamma,+}$ spans certain traces along the interface $\Gamma$ and $\widetilde{\mathcal{A}}_{\Gamma,-}$ has vanishing trace and (approximately) spans certain normal derivatives along $\Gamma$. 
We use the notation $\widetilde{\cdot}$ to signify that the spaces are only approximately $C^1$ and in general $\widetilde{\mathcal{V}}^{1}_h \not\subset \mathcal{V}^1_h$. Details on the behaviour of the normal derivative across the interface are discussed in Section~\ref{sec::propertiesinterfacespaces}. 
In Figure~\ref{fig::constructionC1space} the construction of the spaces is illustrated.

\begin{figure}[h!]
\centering
\begin{subfigure}[t]{0.3\textwidth}
 \begin{center}
  \includegraphics[width=\textwidth]{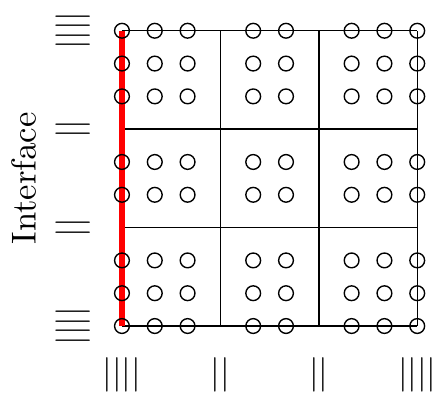}
\end{center}
\caption{The basis for the $C^0$ space.} \label{fig::initialSpace}
\end{subfigure}
~
\begin{subfigure}[t]{0.3\textwidth}
 \begin{center}
  \includegraphics[width=\textwidth]{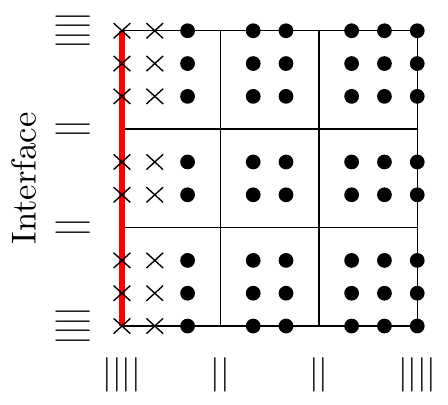}
\end{center}
\caption{Marking of basis functions to be eliminated.} \label{fig::selectionBasisfunctions}
\end{subfigure}
~
\begin{subfigure}[t]{0.3\textwidth}
 \begin{center}
  \includegraphics[width=\textwidth]{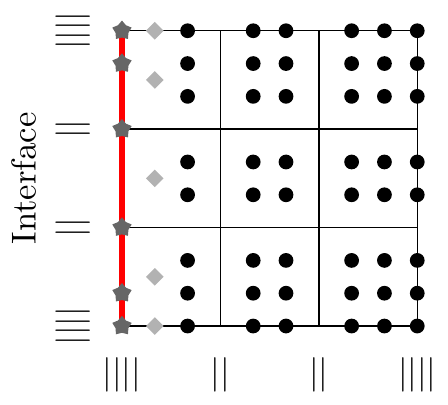}
\end{center}
\caption{New functions spanning the approximate $C^1$ space.} \label{fig::addingnewbasisfunctions}
\end{subfigure}
\caption{An example for the construction of the approximate $C^1$ space: we consider the spline space with $p_1 = p_2 = 3$ and $r_1 = r_2 =1$. The basis for the initial $C^0$~space is depicted in~(a), where each circle represents one basis function. Then all the basis functions which have non-vanishing values and normal derivatives at the interface are selected and eliminated. In~(b) the corresponding DOFs are depicted as crosses. The remaining black dots correspond to the DOFs of the patch-interior space $\mathcal{A}_\circ^{(S)}$. The eliminated basis functions are replaced with approximate $C^1$ basis functions which span the space $\widetilde{\mathcal{A}}_{\Gamma}$, see~(c). Here, the basis functions, which are represented with stars, span certain traces along the interface and those, which are visualized as diamonds, span certain (approximate) normal derivatives along the interface. The corresponding spaces are denoted with $\widetilde{\mathcal{A}}_{\Gamma,+}$ and $\widetilde{\mathcal{A}}_{\Gamma,-}$, respectively.} \label{fig::constructionC1space}
\end{figure}

\subsubsection{The patch-interior spaces}

The patch-interior spaces are spanned by those isogeometric basis functions which have vanishing function values and derivatives at the interface $\Gamma$, that is,
\begin{align}
\mathcal{A}^{(S)}_{\circ} = \text{span} \{ B^{(S)}_{\boldsymbol{j}} : \boldsymbol{j} \in \mathcal{I}^{(S)}_{\circ} \} \label{eq::spaceInteriorBasisfunction}
\end{align}
with
\begin{align*}
B^{(S)}_{\boldsymbol{j}} = \begin{cases}
                ( b^{(S)}_{\boldsymbol{j}} \circ (\f F^{(S)})^{-1} ) (\boldsymbol{x}) & \text{if } \boldsymbol{x} \in \Omega^{(S)} \\
                0 & \text{otherwise},
                  \end{cases}
\end{align*}
where $\mathcal{I}^{(S)}_{\circ} = \{(i_1,i_2)\in\mathbb{Z}^2:3\leq i_1 \leq N^{(S)}_1, 1 \leq i_2 \leq N^{(S)}_2\}$ and $\{b^{(S)}_{\boldsymbol{j}}\}$ are the basis functions of the space~$\boldsymbol{\mathcal{S}}^{(S)}$ of dimension $N^{(S)}_1\times N^{(S)}_2$. In contrast to the patch-interior space in \cite{kapl2019argyris}, the basis functions at the boundary (with no influence at the interface) are included in the space $\mathcal{A}^{(S)}_{\circ}$.

\subsubsection{The approximated gluing data}

In order to construct the space $\widetilde{\mathcal{A}}_{\Gamma}$ we introduce an approximation of the gluing data. The approximated gluing data is taken from the spline space $\mathcal{S}(\widetilde{p},(\widetilde{r},\hat{r}_2-1),(h_2,\hat{h}_2))$, where we prescribe the polynomial degree $\widetilde{p} \geq 2$ and the regularity $\widetilde{r} \geq 1$ in advance. Then $\widetilde{\alpha}^{(S)},\widetilde{\beta}^{(S)}$ are computed by a projection operator $P_{h}$ onto $\mathcal{S}(\widetilde{p},(\widetilde{r},\hat{r}_2-1),(h_2,\hat{h}_2))$ with 
\begin{align}
\widetilde{\alpha}^{(S)} \coloneqq P_{h}({\alpha}^{(S)}) \quad \text{and} \quad \widetilde{\beta}^{(S)} \coloneqq P_{h}({\beta}^{(S)}), \label{eq:projector-gluingdata}
\end{align}
where $\alpha^{(S)}$ is defined in~\eqref{def::alpha_beta} and $\beta^{(S)}$ in~\eqref{def::beta_S}. The functions $\widetilde{\alpha}^{(S)}$ and $\widetilde{\beta}^{(S)}$ are called the \textit{approximated gluing data}. They do not fulfill \eqref{eq::exactgluingdatag1graph} exactly, but approximately. For a suitable projection operator, we have
\begin{align}
     \norm{\widetilde{\alpha}^{(R)} (v) \partial_u \f F^{(L)} (0,v) - \widetilde{\alpha}^{(L)} (v) \partial_u \f F^{(R)} (0,v) +  \widetilde{\beta} (v) \partial_v \f F^{(R)} (0,v)} = O(h_2^{\widetilde{p}+1}) \label{eq::approxgluingdatag1smooth}
\end{align}
where $\widetilde{\beta} = \widetilde{\alpha}^{(L)} \widetilde{\beta}^{(R)} - \widetilde{\alpha}^{(R)} \widetilde{\beta}^{(L)}$. If the gluing data satisfies ${\alpha}^{(S)},{\beta}^{(S)} \in \mathcal{S}(\widetilde{p},(\widetilde{r},\hat{r}_2-1),(h_2,\hat{h}_2))$, then \eqref{eq::approxgluingdatag1smooth} is exactly zero. One can also set $\widetilde{p} = \widetilde{r} \geq 0$, if the gluing data is a polynomial function of degree $\widetilde{p}$, i.e., $\alpha^{(S)}, \beta^{(S)} \in \mathbb{P}^{\widetilde{p}}$. In this case, the requirement $\widetilde{p} \geq 2$ can be dropped. Using the approximated gluing data we construct basis functions along the interface.

\subsubsection{The interface space}

We define a basis following the approach presented in~\cite[Section~5.2]{kapl2017dimension}, where we replace the gluing data with the approximated gluing data. Let $\{ b_j^+\}_{1 \leq j \leq N_+}$ be the basis for the spline space
\[
\mathcal{S}^+ = \mathcal{S}(p^+,(r^+,\hat{r}_2),(h_2,\hat{h}_2))                                                                                                                                                                                                                    \]
and let $\{b_j^-\}_{1 \leq j \leq N_-}$ be the basis for the spline space
\[
 \mathcal{S}^- = \mathcal{S}(p^-,(r^-,\hat{r}_2-1),(h_2,\hat{h}_2)) ,                                       
\]
where $N_\pm$ are the dimensions of the corresponding spaces. The optimal choice for the degrees $p^+$ and $p^-$ and the regularities $r^+$ and $r^-$ will be discussed later. The interface space with approximate $C^1$ continuity is given as
\begin{align}
\widetilde{\mathcal{A}}_{\Gamma} = \widetilde{\mathcal{A}}_{\Gamma,+} \oplus \widetilde{\mathcal{A}}_{\Gamma,-}, \label{eq::spaceOfApproxBasisfunction} 
\end{align}
with
\begin{align*}
\widetilde{\mathcal{A}}_{\Gamma,+} = \text{span} \{ \widetilde{B}_{(j, +)} : 1 \leq j \leq N_+ \} \quad\mbox{ and }\quad \widetilde{\mathcal{A}}_{\Gamma,-} = \text{span} \{ \widetilde{B}_{(j, -)} : 1 \leq j \leq N_- \},
\end{align*}
where
\begin{align*}
\widetilde{B}_{(j, \pm)} = \begin{cases}
                ( \widetilde{f}^{(L)}_{(j, \pm)} \circ (\f F^{(L)})^{-1} ) (\boldsymbol{x}) & \text{if } \boldsymbol{x} \in \Omega^{(L)} \\
                ( \widetilde{f}^{(R)}_{(j, \pm)} \circ (\f F^{(R)})^{-1} ) (\boldsymbol{x}) & \text{if } \boldsymbol{x} \in \Omega^{(R)}
                  \end{cases}
\end{align*}
and
\begin{align}
\begin{split}
  \widetilde{f}^{(S)}_{(j,+)} (u,v) &= b_j^+ (v) \left( b_{1,1}^{(S)} (u) + b_{1,2}^{(S)} (u) \right) +   \widetilde{\beta}^{(S)} (v) (b_j^+) ' (v) \frac{h^{(S)}_1}{p_1^{(S)}} b_{1,2}^{(S)} (u) \label{eq::approxc1basisfunction} \\
  \widetilde{f}^{(S)}_{(j,-)} (u,v) &= \widetilde{\alpha}^{(S)} (v) b_j^- (v) \frac{h^{(S)}_1}{p_1^{(S)}} b_{1,2}^{(S)} (u) .
  \end{split}
\end{align}
We have by definition 
\begin{align}
 \widetilde{f}^{(S)}_{(j,\pm)} \in \mathcal{S}(p^{(S)}_1,r^{(S)}_1,h^{(S)}_1) \otimes \mathcal{S}(p_2^*,(r_2^*,\hat{r}_2-1),(h_2,\hat{h}_2)), \label{eq::spaceforbasisfunctions}
\end{align}
where
\[
 p_2^* = \max(p^+,p^+ +\widetilde{p}-1,p^- + \widetilde{p})
\]
and
\[
 r_2^* = \min(\widetilde{r},r^+-1,r^-).
\]
Depending on $p_2^*$, $r_2^*$ and $\hat{r}_2$, the interface space $\widetilde{\mathcal{A}}_{\Gamma}$ is clearly not necessarily a subspace of $\mathcal{V}^0_h$ and therefore does not yield an isoparametric discretization. Note that the isogeometric concept is violated by using a spline space of lower regularity and (in general) higher degree near the interface. A brief overview of the steps for constructing the interface spaces is shown in Figure~\ref{fig::mappinginterfacespaces}.

\begin{figure}[h!]
\centering
\includegraphics[width=0.8\textwidth]{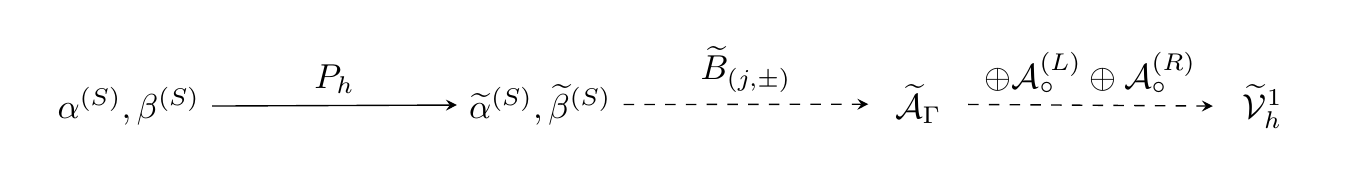}
\caption{The steps for constructing the approximate $C^1$ space $\widetilde{\mathcal{V}}^{1}_h$: the dashed arrows illustrate the construction steps and the solid line corresponds to the projection step.} \label{fig::mappinginterfacespaces}
\end{figure}

\subsubsection{Optimal choice of the spline parameters}

In order to be refineable spline spaces, the polynomial degrees and regularities of $\mathcal{S}^+$ and $\mathcal{S}^-$ need to satisfy
\[
 r^+ \leq p^+ - 1 \quad \mbox{ and } \quad r^- \leq p^- - 1.
\]
The functions $\widetilde{f}^{(S)}_{(j,\pm)}$ as in~\eqref{eq::spaceforbasisfunctions} have to satisfy $\widetilde{f}^{(S)}_{(j,\pm)} \in C^1(\widehat\Omega)$, in accordance with Assumption~\ref{ass::minimumregularity}. Thus, the regularity $r_2^*$ needs to satisfy
\[
 r_2^* = \min(\widetilde{r},r^+-1,r^-) \geq 1.
\]
To be able to reproduce traces and (approximate) normal derivatives of optimal order, that is, of degree $p_2$ and $p_2-1$, respectively, the degrees for $\mathcal{S}^+$ and $\mathcal{S}^-$ need to satisfy $p^+= p_2$ and $p^- = p_2-1$, respectively. Note that a higher polynomial degree for $\mathcal{S}^+$ and $\mathcal{S}^-$ will not improve the global approximation properties.

To summarize, we obtain $\widetilde{r} \geq 1$ as well as
\[
 2 \leq r^+ \leq p^+ - 1 = p_2 - 1
\]
and
\[
 1 \leq r^- \leq p^- - 1 = p_2 - 2.
\]
From now on, we choose the maximum regularity for the spaces $S^+$ and $S^-$ in order to achieve the smallest number of degrees of freedom, i.e., $r^+ = p_2 -1$ and $r^- = p_2-2$. We conclude from these restrictions that the degree $p_2$ needs to satsify $p_2 \geq 3$.
\begin{ass}[Minimum polynomial degree at the interface] \label{ass::minimumdegree}
 We assume that the polynomial degree for the discrete space at the interface fulfills $p_2 \geq 3$.
\end{ass}

In Section~\ref{sec::propertiesinterfacespaces}, we discuss the role of the approximated gluing data. Theorem~\ref{thm::boundsjumptV1} and Conjecture~\ref{con::aprioirierror} and the numerical experiments show how to choose the degree $\widetilde{p}$ and regularity $\widetilde{r}$ of the approximated gluing data in order to get optimal convergence rates. To obtain a sufficiently smooth spline approximation of the gluing data, we need the following.
\begin{ass}[Requirement on the approximation of the gluing data] \label{ass::spaceGluingData}
 If the gluing data are not polynomial functions, we assume that  the approximated gluing data is computed from $\mathcal{S}(\widetilde{p},(\widetilde{r},\hat{r}_2-1),(h_2,\hat{h}_2))$, with $1 \leq \widetilde{r} \leq \widetilde{p}-1$.
\end{ass}
If the gluing data are polynomials of low degree, they can be reproduced exactly using the space $\mathbb{P}^{\widetilde{p}}=\mathcal{S}(\widetilde{p},\infty,1)$, whereas if they are polynomials of high degree, they can be approximated using splines from $\mathcal{S}(\widetilde{p},\widetilde{p}-1,h_2)$. Throughout the following section we assume that the gluing data are not polynomial functions.

\section{Properties of the approximate $C^1$ space} \label{sec::propertiesinterfacespaces}

In this section, the properties of the approximated gluing data as well of the approximate $C^1$ space $\widetilde{\mathcal{V}}^{1}_h$ are studied. The properties of the projector $P_{h}$, which is used to define the approximated gluing data, are described in Subsection~\ref{sec::projector}. In Subsection~\ref{sec::errorforapproximatespace}, the boundedness of the jump of the normal derivative at the interface is proven. We can show that the convergence rate of this error depends only on the polynomial degree $\widetilde{p}$ of the approximated gluing data. In Subsection~\ref{sec::vanishing-jump}, we study the special case when the jump of the normal derivative vanishes and we introduced the AS-$G^1$ case. We conclude this section with a brief discussion of the two possible cases at the boundary, see Subsection~\ref{sec::smoothboundary}.

\subsection{Properties of the projection operator defining the approximate gluing data}\label{sec::projector}

The approximated gluing data is constructed using a projection operator \[
 P_{h} : C^{\hat{r}_2-1} ([0,1]) \to \mathcal{S}(\widetilde{p},(\widetilde{r}, \hat{r}_2-1),(h_2,\hat{h}_2)). 
\]
We assume that $1\leq \widetilde{r} \leq \widetilde{p}-1$ and that the operator satisfies the following properties:
\begin{itemize}
  \item it preserves splines, i.e., 
  \begin{align}
    P_{h} s_h = s_h, \qquad \forall \; s_h \in \mathcal{S}(\widetilde{p},(\widetilde{r},\hat{r}_2-1),(h_2,\hat{h}_2)), \label{projection::preservespline}
  \end{align}
  \item it is $L^\infty$-stable, i.e., there exists a constant $C>0$, such that
    \begin{align}
    \| P_{h} s \|_{L^\infty ([0,1])} \leq C \| s \|_{L^\infty ([0,1])}, \qquad   \forall \; s \in C^{\hat{r}_2-1}([0,1]),\label{eq::infinitiy-stable-projector}
  \end{align}
  \item it interpolates at the boundary, i.e.,
    \begin{align}
    [P_{h} s] (\bar{v}) = s (\bar{v}), \qquad   \mbox{ for } \bar{v} \in \{0,1\}, \; \forall \; s \in C^{\hat{r}_2-1} ([0,1]),\label{eq::requirement-boundary}
  \end{align}
  and
  \item it satisfies the following estimate: there exists a constant $C>0$, such that for all $0\leq j \leq {1}/{\hat{h}_2}-1$ we have
    \begin{align}
    \| s - P_{h} s \|_{L^\infty (\hat{\omega}_j)} \leq C h_2^{\widetilde{p}+1} \| s^{(\widetilde{p}+1)} \|_{L^\infty(\hat{\omega}_j)}, \qquad   \forall \; s \mbox{ with }s|_{\hat{\omega}_i} \in C^{\infty}(\hat{\omega}_j),\label{eq::approximation-projector}
  \end{align}
  where $\hat{\omega}_j = [j\hat{h}_2,(j+1)\hat{h}_2]$ and $s^{(\widetilde{p}+1)}$ is the derivative of $s$ of order $\widetilde{p}+1$.
\end{itemize}
For each subinterval $\hat{\omega}_j$ there exists a local projector satisfying~\eqref{eq::infinitiy-stable-projector} and~\eqref{eq::approximation-projector}, cf.~\cite[Theorem 6.25]{schumaker_2007}. A modification of that projector, interpolating function values at the global boundary and derivatives up to order $\hat{r}_2-1$ at all inner knots $j\hat{h}_2$, yields a global projector satisfying~\eqref{projection::preservespline} and~\eqref{eq::requirement-boundary}. Such a construction is similar to the one presented in~\cite[Proposition 3.2]{BeiraodaVeiga2012anisotropic}.

In practice, the required properties can be relaxed, since the projector is used only to prove a bound as in Proposition~\ref{prop::bounds-E1-E2}. Note that~\eqref{eq::requirement-boundary} is required to simplify the imposition of boundary conditions,   as described in Section~\ref{sec::smoothboundary}. The condition may be dropped, as discussed in Remark~\ref{rem::approximate-kernel}. A desirable property is that applying the operator to a piecewise rational function should be computationally cheap. In the following, the influence of the parameters $\widetilde{p}$ and $\widetilde{r}$ on the normal jump of the interface space is studied.

\subsection{Estimating the jump of the normal derivative of the approximate $C^1$ space} \label{sec::errorforapproximatespace}

Since the space $\widetilde{\mathcal{V}}^{1}_h$ is not exactly $C^1$-smooth but only approximately, we want to estimate the jump of the normal derivative across the interface. Let~$\f n$ be the normal vector to the interface, as in~\eqref{eq::normalvector}, and let $\varphi_h^{(S)} = \varphi_h |_{\Omega^{(S)} \cup \Gamma}$ with $\varphi_h \in \widetilde{\mathcal{V}}^{1}_h$. We denote by 
\[
 \partial_{\f n} \varphi_h^{(S)} (\f x) = \lim_{\Omega^{(S)}\ni \f x^{(S)}\rightarrow \f x}\nabla \varphi_h^{(S)}  (\f x^{(S)}) \cdot \f n
\]
the normal derivative of $\varphi_h^{(S)}$ at $\f x \in \Gamma$, defined as a limit. Similarly, the tangential derivative along the interface is expressed by 
\[
 \partial_{\f t_0} \varphi_h (\f x) = \partial_{\f t_0} \varphi_h^{(S)} (\f x) = \lim_{\Omega^{(S)}\ni \f x^{(S)}\rightarrow \f x}\nabla \varphi_h^{(S)}  (\f x^{(S)}) \cdot \f t_0.
\]
Note that the tangential derivative is continuous and therefore the limit is well-defined and does not depend on the side. The jump of the normal derivative is defined as
\[
 \jump{\partial_{\f n} \varphi_h}(\f x) = \partial_{\f n} \varphi_h^{(R)} (\f x) - \partial_{\f n} \varphi_h^{(L)} (\f x).
\]
It satisfies the following bound.
\begin{thm}\label{thm::boundsjumptV1}
Let $h_2$ be small enough and the gluing data $\alpha^{(S)}, \beta^{(S)} \in C^{\hat{r}_2-1} ([0,1])$
. Then we have for all $\varphi_h \in \widetilde{\mathcal{A}}_{\Gamma}$ that
\begin{align*}
 \left\| \jump{\partial_{\f n} \varphi_h} \right\|_{L^2(\Gamma)} \leq C h_2^{\widetilde{p}+1} \left( \left\| \varphi_h \right\|^2_{H^2(\Omega^{(R)})} + \left\| \varphi_h  \right\|^2_{H^2(\Omega^{(L)})} \right)^{1/2},
\end{align*}
where $C>0$ depends on the geometry, but not on the mesh size.   For certain configurations the jump may vanish. This is characterized in Section~\ref{sec::vanishing-jump}.
\end{thm}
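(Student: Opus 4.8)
The plan is to pull the jump of the normal derivative back to the parametric interface, to observe that the functions of $\widetilde{\mathcal{A}}_\Gamma$ satisfy the transmission relation \eqref{eq::C1conditiontest} \emph{exactly} once the approximated gluing data replaces the exact one, and then to pay only the projection error $\norm{\alpha^{(S)}-\widetilde\alpha^{(S)}}_{L^\infty}$, $\norm{\beta^{(S)}-\widetilde\beta^{(S)}}_{L^\infty}$ when reverting to the exact data. First I would establish a parametric formula for the normal derivative. Writing $f^{(S)}=\varphi_h\circ\f F^{(S)}$ and splitting $\partial_u\f F^{(S)}(0,v)$ into its $\f t_0$- and $\f n$-components, one has $\partial_u\f F^{(S)}(0,v)\cdot\f t_0=\tau\beta^{(S)}$ by \eqref{def::beta_S}, while $\partial_u\f F^{(S)}(0,v)\cdot\f n=\alpha^{(S)}/\tau$ follows exactly as in the proof of Proposition~\ref{prop::normalvector} (with the normalization $\gamma\equiv1$). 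Together with $\partial_v\f F^{(S)}(0,v)=\tau\f t_0$ and the chain rule this gives
\[
\partial_{\f n}\varphi_h^{(S)}=\frac{\tau(v)}{\alpha^{(S)}(v)}\bigl(\partial_u f^{(S)}(0,v)-\beta^{(S)}(v)\,\partial_v f^{(S)}(0,v)\bigr),
\]
so that $\jump{\partial_{\f n}\varphi_h}$ equals $\tau$ times the difference of the two bracketed quotients; here $\partial_v f^{(L)}(0,\cdot)=\partial_v f^{(R)}(0,\cdot)$ (because $\varphi_h$ is $C^0$ across $\Gamma$), a function we denote by $\partial_v f$.

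Next I would evaluate the parametric derivatives of the interface basis functions \eqref{eq::approxc1basisfunction} at $u=0$, using the boundary data of the two leftmost B-splines, $b_{1,1}^{(S)}(0)=1$, $b_{1,2}^{(S)}(0)=0$ and $(b_{1,1}^{(S)})'(0)=-(b_{1,2}^{(S)})'(0)=-p_1^{(S)}/h_1^{(S)}$. A short computation then shows $\widetilde f^{(S)}_{(j,+)}(0,v)=b_j^+(v)$ and $\widetilde f^{(S)}_{(j,-)}(0,v)=0$ (so the traces are indeed independent of $S$), and, for any $\varphi_h=\sum_j c_j^+\widetilde B_{(j,+)}+\sum_j c_j^-\widetilde B_{(j,-)}\in\widetilde{\mathcal{A}}_\Gamma$, that
\[
\partial_u f^{(S)}(0,v)-\widetilde\beta^{(S)}(v)\,\partial_v f^{(S)}(0,v)=\widetilde\alpha^{(S)}(v)\,c^-(v),
\]
where $c^-\coloneqq\sum_j c_j^- b_j^-$ does \emph{not} depend on $S$ (the coefficients $c_j^\pm$ being the same on both patches by construction). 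Substituting $\partial_u f^{(S)}=\widetilde\alpha^{(S)}c^-+\widetilde\beta^{(S)}\partial_v f$ into the jump expression, the two leading ``$c^-$''-terms cancel, leaving
\begin{align*}
\jump{\partial_{\f n}\varphi_h}=\tau\Bigl[&\Bigl(\tfrac{\widetilde\alpha^{(R)}-\alpha^{(R)}}{\alpha^{(R)}}-\tfrac{\widetilde\alpha^{(L)}-\alpha^{(L)}}{\alpha^{(L)}}\Bigr)c^- \\
&+\Bigl(\tfrac{\widetilde\beta^{(R)}-\beta^{(R)}}{\alpha^{(R)}}-\tfrac{\widetilde\beta^{(L)}-\beta^{(L)}}{\alpha^{(L)}}\Bigr)\partial_v f\Bigr].
\end{align*}

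To finish I would estimate the four prefactors and the two factors $c^-,\partial_v f$ separately. Since $\widetilde\alpha^{(S)}-\alpha^{(S)}=\alpha^{(S)}-P_h\alpha^{(S)}$ and $\widetilde\beta^{(S)}-\beta^{(S)}=\beta^{(S)}-P_h\beta^{(S)}$, and $\alpha^{(S)},\beta^{(S)}$ are piecewise rational with strictly positive denominators on the geometry mesh (hence piecewise $C^\infty$), estimate \eqref{eq::approximation-projector} gives $\norm{\alpha^{(S)}-\widetilde\alpha^{(S)}}_{L^\infty}+\norm{\beta^{(S)}-\widetilde\beta^{(S)}}_{L^\infty}\le C h_2^{\widetilde p+1}$ with $C$ depending only on the geometry; combined with $|\alpha^{(S)}|\ge c_0>0$ on $[0,1]$, all four prefactors are $O(h_2^{\widetilde p+1})$ in $L^\infty$. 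The same bounds show that for $h_2$ small enough $|\widetilde\alpha^{(S)}|\ge c_0/2$ and $\norm{\widetilde\beta^{(S)}}_{L^\infty}\le C$, whence the identity of the previous paragraph and regularity of $\f F^{(S)}$ yield $|c^-(v)|+|\partial_v f(v)|\le C\,\bigl|(\nabla\varphi_h^{(L)})\circ\f F^{(L)}(0,v)\bigr|$. Applying the trace inequality $H^1(\Omega^{(L)})\hookrightarrow L^2(\Gamma)$ componentwise to $\nabla\varphi_h^{(L)}$, together with $\norm{\nabla\varphi_h^{(L)}}_{H^1(\Omega^{(L)})}\le\norm{\varphi_h}_{H^2(\Omega^{(L)})}$ and the change of variables $\mathrm{d}s=\tau\,\mathrm{d}v$, gives $\norm{c^-}_{L^2(0,1)}+\norm{\partial_v f}_{L^2(0,1)}\le C\norm{\varphi_h}_{H^2(\Omega^{(L)})}$. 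Collecting the pieces and writing $\norm{\jump{\partial_{\f n}\varphi_h}}^2_{L^2(\Gamma)}=\int_0^1\bigl|\jump{\partial_{\f n}\varphi_h}(v)\bigr|^2\tau(v)\,\mathrm{d}v$ then produces the asserted rate, after the trivial estimate $\norm{\varphi_h}_{H^2(\Omega^{(L)})}\le(\norm{\varphi_h}^2_{H^2(\Omega^{(L)})}+\norm{\varphi_h}^2_{H^2(\Omega^{(R)})})^{1/2}$.

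I expect the crux to be the bookkeeping of the second step: verifying that the particular combination of $b_{1,1}^{(S)},b_{1,2}^{(S)}$ and the scaling $h_1^{(S)}/p_1^{(S)}$ in \eqref{eq::approxc1basisfunction} is exactly what makes the transmission relation hold with no error in the approximated gluing data — this is what reduces the whole estimate to a projection-error estimate. The remaining difficulty, which is why ``$h_2$ small enough'' is assumed, is to ensure that the constants in the last step are genuinely independent of $h_2$, in particular that $\widetilde\alpha^{(S)}$ stays uniformly bounded away from zero; the chain-rule and trace arguments themselves are routine.
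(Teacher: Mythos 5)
Your proposal is correct and follows essentially the same route as the paper's proof: a pointwise representation of $\jump{\partial_{\f n}\varphi_h}$ along $\Gamma$ exploiting that, by construction of \eqref{eq::approxc1basisfunction}, the traces and the combinations $\partial_u f^{(S)}-\widetilde\beta^{(S)}\partial_v f^{(S)}=\widetilde\alpha^{(S)}c^-$ are side-independent, followed by the projector estimates \eqref{eq::infinitiy-stable-projector}--\eqref{eq::approximation-projector}, the smallness of $h_2$ to keep $1/\widetilde\alpha^{(S)}$ uniformly bounded, and a trace inequality. The only (minor) difference is that you keep the jump formula in the parametric quantities $c^-$ and $\partial_v f$ with only the exact $\alpha^{(S)}$ in the denominators, whereas the paper (Propositions~\ref{prop::jump-representation} and~\ref{prop::bounds-E1-E2}) writes it in terms of $\partial_{\f t_0}\varphi_h$ and $\partial_{\f n}\varphi_h^{(L)}$ with factors $E_1^{(L)},E_2^{(L)}$ containing $1/\widetilde\alpha^{(L)}$; the two bookkeepings are equivalent and lead to the same bound.
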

Note that $h_2$ needs to be sufficiently small, such that 
\[
 \frac{|\alpha^{(S)} (v) - \widetilde{\alpha}^{(S)} (v)|}{|\alpha^{(S)} (v)|} \leq \varepsilon < 1
\]
for all $v\in[0,1]$ and for $S\in\{L,R\}$. Such a bound on $h_2$ always exists, since $\widetilde{\alpha}^{(S)}$ converges to ${\alpha}^{(S)}$ pointwise and $1/{\alpha}^{(S)}$ is bounded from above. Before we can state the proof of Theorem~\ref{thm::boundsjumptV1}, some preliminary estimates are needed. 
The jump of the normal derivative satisfies the following pointwise representation.
\begin{prop}\label{prop::jump-representation}
We have for $ \varphi_h \in \widetilde{\mathcal{A}}_{\Gamma} $ and for all $\f x \in \Gamma$, with $\f x = \f F^{(L)}(0,v)$ and $v \in [0,1]$, that
\begin{align*}
 \jump{\partial_{\f n} \varphi_h} (\f x)
 & = E_1^{(L)}(v) \, \partial_{\f t_0} \varphi_h(\f x) + E_2^{(L)}(v) \, \partial_{\f n} \varphi_h^{(L)}(\f x) 
\end{align*}
where
\begin{align*}
 E_1^{(L)} = \tau^2\left(\frac{\widetilde{\alpha}^{(R)} (\beta^{(L)}-\widetilde{\beta}^{(L)})}{{\alpha}^{(R)} \widetilde{\alpha}^{(L)} }-\frac{\beta^{(R)}-\widetilde{\beta}^{(R)}}{{\alpha}^{(R)}}\right)
 \quad \text{and} \quad
 E_2^{(L)} = \left( \frac{\widetilde{\alpha}^{(R)} (\alpha^{(L)} - \widetilde{\alpha}^{(L)} )}{\alpha^{(R)}\widetilde{\alpha}^{(L)} } - \frac{{\alpha}^{(R)} -\widetilde{\alpha}^{(R)}}{\alpha^{(R)}} \right).
\end{align*}
Due to the symmetry of the construction, a similar statement is valid with switched sides.
\end{prop}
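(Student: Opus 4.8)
The plan is to reduce the computation to the parameter line $u=0$ (using the explicit form of the basis functions together with the standard boundary behaviour of an open-knot B-spline basis) and then to pass to physical space via the chain rule and Proposition~\ref{prop::normalvector}; throughout I use the normalisation $\gamma\equiv1$ for the gluing data, as in~\eqref{def::alpha_beta}. First I would record the values and first $u$-derivatives at $u=0$ of the basis functions $\widetilde{f}^{(S)}_{(j,\pm)}$ from~\eqref{eq::approxc1basisfunction}. Since the knot vectors are open, the only univariate B-splines in the $u$-direction not vanishing at $u=0$ are $b_{1,1}^{(S)}$ and $b_{1,2}^{(S)}$, with $b_{1,1}^{(S)}(0)=1$, $b_{1,2}^{(S)}(0)=0$, $(b_{1,1}^{(S)})'(0)=-p_1^{(S)}/h_1^{(S)}$ and $(b_{1,2}^{(S)})'(0)=p_1^{(S)}/h_1^{(S)}$. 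Inserting these into~\eqref{eq::approxc1basisfunction}, where the factor $h_1^{(S)}/p_1^{(S)}$ is precisely what cancels $(b_{1,2}^{(S)})'(0)$, gives for all $v\in[0,1]$
\begin{gather*}
 \widetilde{f}^{(S)}_{(j,+)}(0,v)=b_j^+(v),\qquad \partial_u\widetilde{f}^{(S)}_{(j,+)}(0,v)=\widetilde{\beta}^{(S)}(v)\,(b_j^+)'(v),\\
 \widetilde{f}^{(S)}_{(j,-)}(0,v)=0,\qquad \partial_u\widetilde{f}^{(S)}_{(j,-)}(0,v)=\widetilde{\alpha}^{(S)}(v)\,b_j^-(v).
\end{gather*}
Hence, writing an arbitrary $\varphi_h\in\widetilde{\mathcal{A}}_{\Gamma}$ as $\varphi_h=\sum_j c_{j,+}\widetilde{B}_{(j,+)}+\sum_j c_{j,-}\widetilde{B}_{(j,-)}$ and setting $f^{(S)}=\varphi_h\circ\f F^{(S)}$, $g\coloneqq\sum_j c_{j,+}b_j^+$ and $g_-\coloneqq\sum_j c_{j,-}b_j^-$, we obtain
\[
 f^{(L)}(0,v)=f^{(R)}(0,v)=g(v),\qquad \partial_u f^{(S)}(0,v)=\widetilde{\beta}^{(S)}(v)\,g'(v)+\widetilde{\alpha}^{(S)}(v)\,g_-(v);
\]
in particular the functions of $\widetilde{\mathcal{A}}_{\Gamma}$ satisfy the approximate analogue of~\eqref{eq::C1conditiontest} obtained by replacing the gluing data with the approximated gluing data.

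Next I would pass to physical space. The chain rule gives $\nabla\varphi_h^{(S)}\cdot\partial_u\f F^{(S)}=\partial_u f^{(S)}$ and $\nabla\varphi_h^{(S)}\cdot\partial_v\f F^{(S)}=\partial_v f^{(S)}$; since $\partial_v\f F^{(S)}(0,v)=\f t(v)=\tau(v)\f t_0(v)$ is common to both patches, the second relation yields $\partial_{\f t_0}\varphi_h=\tau^{-1}\partial_v f^{(S)}$, independent of the side (which also reproves the claimed continuity of the tangential derivative). Decomposing $\partial_u\f F^{(S)}$ in the orthonormal frame $\{\f t_0,\f n\}$ by Proposition~\ref{prop::normalvector}, i.e. $\partial_u\f F^{(S)}=(\partial_u\f F^{(S)}\cdot\f t_0)\,\f t_0+\det(\partial_u\f F^{(S)},\f t_0)\,\f n$, taking the inner product with $\nabla\varphi_h^{(S)}$, and using~\eqref{def::alpha_beta} and~\eqref{def::beta_S} (so that $\det(\partial_u\f F^{(S)}(0,v),\f t_0(v))=\alpha^{(S)}(v)/\tau(v)$ and $\partial_u\f F^{(S)}(0,v)\cdot\f t_0(v)=\tau(v)\,\beta^{(S)}(v)$), I obtain
\[
 \partial_{\f n}\varphi_h^{(S)}(\f x)=\frac{\tau(v)}{\alpha^{(S)}(v)}\left(\partial_u f^{(S)}(0,v)-\beta^{(S)}(v)\,\partial_v f^{(S)}(0,v)\right),
\]
which, after inserting the identities of the first step, becomes $\partial_{\f n}\varphi_h^{(S)}=\frac{\tau}{\alpha^{(S)}}\left((\widetilde{\beta}^{(S)}-\beta^{(S)})\,g'+\widetilde{\alpha}^{(S)}\,g_-\right)$.

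Finally I would assemble the jump. Taking the last identity with $S=L$ and solving for $g_-$ in terms of $\partial_{\f n}\varphi_h^{(L)}$ and $g'$ (legitimate since $\widetilde{\alpha}^{(L)}$ does not vanish once $h_2$ is small enough, because $\widetilde{\alpha}^{(L)}\to\alpha^{(L)}<0$), then substituting this together with $g'=\tau\,\partial_{\f t_0}\varphi_h$ into $\jump{\partial_{\f n}\varphi_h}=\partial_{\f n}\varphi_h^{(R)}-\partial_{\f n}\varphi_h^{(L)}$, I would collect the coefficients of $\partial_{\f t_0}\varphi_h$ and of $\partial_{\f n}\varphi_h^{(L)}$; a short rearrangement identifies them with $E_1^{(L)}$ and $E_2^{(L)}$ exactly as stated, and the variant with switched sides follows by the same computation with $L$ and $R$ interchanged. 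I expect no conceptual obstacle here: the main effort is the algebraic bookkeeping in this last step, together with recording the B-spline boundary values correctly and keeping track of the orientation $\f n=\f n^{(L)}=-\f n^{(R)}$ and of the $h_1^{(S)}/p_1^{(S)}$ normalisation in~\eqref{eq::approxc1basisfunction}.
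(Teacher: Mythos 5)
Your proposal is correct and takes essentially the same route as the paper's proof: the chain rule, the decomposition $\partial_u \f F^{(S)} = \tau\beta^{(S)}\f t_0 + \frac{\alpha^{(S)}}{\tau}\f n$ coming from Proposition~\ref{prop::normalvector} and~\eqref{def::alpha_beta}--\eqref{def::beta_S}, the side-independent trace and transversal coefficient functions ($g$, $g_-$, i.e.\ the paper's $G_1$, $G_2$) implied by the interface construction, and elimination of the common coefficient to isolate $\jump{\partial_{\f n}\varphi_h}$. The only difference is cosmetic: you derive the side-independent structure explicitly from the B-spline endpoint values, where the paper simply invokes it ``by construction, cf.~\eqref{eq::approxc1basisfunction}'', and you eliminate $g_-$ by solving the left-patch relation and substituting, which is algebraically the same as the paper's step $G_2^{(R)}-G_2^{(L)}=0$.
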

\begin{proof}
Using the definition of $\varphi_h$ we have $f_h^{(S)}(0,v) = \varphi_h \circ \f F^{(S)}(0,v)$, for $S\in\{L,R\}$. Applying the chain rule results in
\[
    \partial_v f_h^{(S)} (0,v) = \nabla \varphi_h \circ \f F^{(S)}(0,v) \cdot \partial_v \f F^{(S)}(0,v) = \tau(v) \partial_{\f t_0}\varphi_h \circ \f F^{(S)}(0,v).
\]
Similarly, we have
\[
\partial_u f_h^{(S)} (0,v) = \nabla \varphi_h \circ \f F^{(S)}(0,v) \cdot \partial_u \f F^{(S)}(0,v).
\]
Furthermore, one can describe $\partial_u \f F^{(S)}(0,v)$ as a linear combination of $\f t_0$ and $\f n$, like in the proof of Proposition~\ref{prop::normalvector},
\begin{align*}
 \partial_u \f F^{(S)}(0,v) &= \beta^{(S)} (v)\tau(v) \f t_0(v) + \frac{\alpha^{(S)}(v) }{\tau(v)} \f n(v) 
\end{align*}
and it follows
\begin{align*}
\partial_u f_h^{(S)} (0,v) &= \beta^{(S)} (v) \tau(v) \partial_{\f t_0} \varphi_h \circ \f F^{(S)}(0,v) + \frac{\alpha^{(S)}(v) }{\tau(v)} \partial_{\f n} \varphi_h \circ \f F^{(S)}(0,v).
\end{align*}
By construction, cf.~\eqref{eq::approxc1basisfunction}, ${f}_{h}^{(S)}$ satisfies
\begin{align*}
  \partial_v {f}_{h}^{(S)} (0,v) &= G_1 (v) \\
  \partial_u {f}_{h}^{(S)} (0,v) &= \widetilde{\beta}^{(S)} (v) G_1 (v) + \widetilde{\alpha}^{(S)} (v) G_2(v)
\end{align*}
for some functions $G_1$ and $G_2$, which are independent of the side $S\in\{L,R\}$. Hence, we get 
\[
 G_1(v) = \tau(v) \partial_{\f t_0}\varphi_h \circ \f F^{(S)}(0,v)
\]
and
\begin{align*}
  \partial_u {f}_{h}^{(S)} (0,v) &= \widetilde{\beta}^{(S)} (v) G_1 (v) + \widetilde{\alpha}^{(S)} (v) G_2(v) = \beta^{(S)} (v) G_1(v) + \frac{\alpha^{(S)}(v) }{\tau(v)} \partial_{\f n} \varphi_h \circ \f F^{(S)}(0,v)
\end{align*}
and consequently
\begin{align*}
  G_2(v) = G_2^{(S)}(v) = \left( \frac{\alpha^{(S)}(v)}{\widetilde{\alpha}^{(S)} (v)\tau(v)} \right)\partial_{\f n} \varphi_h^{(S)} \circ \f F^{(S)}(0,v)  + \left(\tau(v)\frac{\beta^{(S)}(v)-\widetilde{\beta}^{(S)}(v)}{\widetilde{\alpha}^{(S)} (v)}\right)\partial_{\f t_0} \varphi_h \circ \f F^{(S)}(0,v),
\end{align*}
independent of the side $S\in\{L,R\}$. We obtain
\begin{align*}
  0 = G_2^{(R)}(v) - G_2^{(L)}(v) = & \left( \frac{\alpha^{(R)}(v)}{\widetilde{\alpha}^{(R)} (v)\tau(v)} \right)\partial_{\f n} \varphi_h^{(R)}(\f x)  + \left(\tau(v)\frac{\beta^{(R)}(v)-\widetilde{\beta}^{(R)}(v)}{\widetilde{\alpha}^{(R)} (v)}\right)\partial_{\f t_0} \varphi_h(\f x) \\
  & - \left( \frac{\alpha^{(L)}(v)}{\widetilde{\alpha}^{(L)} (v)\tau(v)} \right)\partial_{\f n} \varphi_h^{(L)}(\f x)  - \left(\tau(v)\frac{\beta^{(L)}(v)-\widetilde{\beta}^{(L)}(v)}{\widetilde{\alpha}^{(L)} (v)}\right)\partial_{\f t_0} \varphi_h(\f x).
\end{align*}
In the following we replace $\partial_{\f n} \varphi_h^{(R)}$ by
\begin{align}
 \partial_{\f n} \varphi_h^{(R)}(\f x) = \partial_{\f n} \varphi_h^{(L)}(\f x) + \jump{\partial_{\f n} \varphi_h}(\f x).\label{eq::replacing-phiR}
\end{align}
If instead $\partial_{\f n} \varphi_h^{(L)}$ is replaced by $\partial_{\f n} \varphi_h^{(R)}-\jump{\partial_{\f n} \varphi_h}$, we get a representation for the right patch. Using~\eqref{eq::replacing-phiR} we obtain
\begin{align*}
  \left( \frac{\alpha^{(R)}(v)}{\widetilde{\alpha}^{(R)} (v)\tau(v)} \right)\jump{\partial_{\f n} \varphi_h}(\f x) = & \left( \frac{\alpha^{(L)}(v)}{\widetilde{\alpha}^{(L)} (v)\tau(v)}-\frac{\alpha^{(R)}(v)}{\widetilde{\alpha}^{(R)} (v)\tau(v)} \right)\partial_{\f n} \varphi_h^{(L)}(\f x)  \\
  & + \tau(v)\left(\frac{\beta^{(L)}(v)-\widetilde{\beta}^{(L)}(v)}{\widetilde{\alpha}^{(L)} (v)}-\frac{\beta^{(R)}(v)-\widetilde{\beta}^{(R)}(v)}{\widetilde{\alpha}^{(R)} (v)}\right)\partial_{\f t_0} \varphi_h(\f x).
\end{align*}
Multiplying both sides with $\widetilde{\alpha}^{(R)} \tau / {\alpha}^{(R)}$ yields
\begin{align*}
  \jump{\partial_{\f n} \varphi_h}(\f x) = & \left( \frac{\widetilde{\alpha}^{(R)} (v)(\alpha^{(L)}(v) - \widetilde{\alpha}^{(L)} (v))}{\alpha^{(R)}(v)\widetilde{\alpha}^{(L)} (v)} - \frac{{\alpha}^{(R)} (v)-\widetilde{\alpha}^{(R)}(v)}{\alpha^{(R)}(v)} \right)\partial_{\f n} \varphi_h^{(L)}(\f x)  \\
  & + \tau(v)^2\left(\frac{\widetilde{\alpha}^{(R)} (v)(\beta^{(L)}(v)-\widetilde{\beta}^{(L)}(v))}{{\alpha}^{(R)} (v)\widetilde{\alpha}^{(L)} (v)}-\frac{\beta^{(R)}(v)-\widetilde{\beta}^{(R)}(v)}{{\alpha}^{(R)} (v)}\right)\partial_{\f t_0} \varphi_h(\f x),
\end{align*}
which concludes the proof.
\end{proof}
Due to the approximation using the projector $P_{h}$, the following estimates for the factors $E_1^{(L)}$ and $E_2^{(L)}$ are obtained.
\begin{prop}\label{prop::bounds-E1-E2}
Let the assumptions of Theorem~\ref{thm::boundsjumptV1} be satisfied. Then we have 
\[
 \left\|E_1^{(L)} \right\|_{L^\infty([0,1])} \leq C h_2^{\widetilde{p}+1}
\]
as well as
\[
 \left\|E_2^{(L)}\right\|_{L^\infty([0,1])} \leq C h_2^{\widetilde{p}+1},
\]
where the constant $C$ depends on $p$, and on the geometry mappings $\f F^{(L)}$ and $\f F^{(R)}$, but not on the mesh size.
\end{prop}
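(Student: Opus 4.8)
The plan is to regard $E_1^{(L)}$ and $E_2^{(L)}$ from Proposition~\ref{prop::jump-representation} as sums of products in which one isolates, on the one hand, factors bounded in $L^\infty([0,1])$ by constants independent of $h_2$ (namely $\tau$, $(\alpha^{(R)})^{-1}$, $(\widetilde\alpha^{(L)})^{-1}$ and $\widetilde\alpha^{(R)}$) and, on the other hand, the projection errors $\alpha^{(S)}-\widetilde\alpha^{(S)}$ and $\beta^{(S)}-\widetilde\beta^{(S)}$, which are $O(h_2^{\widetilde p+1})$; the bound then follows from the triangle inequality. First I would establish the uniform bounds. Since the $\f F^{(S)}$ are regular and $C^0$-conforming, $\tau$ is bounded from above on $[0,1]$; the gluing data $\alpha^{(S)},\beta^{(S)}$ are continuous on the compact interval $[0,1]$ (by the hypothesis of Theorem~\ref{thm::boundsjumptV1} even $C^{\hat r_2-1}$), hence bounded, and the strict sign conditions $\alpha^{(L)}(v)<0<\alpha^{(R)}(v)$ together with continuity give $\inf_{v\in[0,1]}|\alpha^{(S)}(v)|>0$, so $(\alpha^{(R)})^{-1}$ is bounded. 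By $L^\infty$-stability of the projector, \eqref{eq::infinitiy-stable-projector}, the quantities $\widetilde\alpha^{(S)}=P_h\alpha^{(S)}$ and $\widetilde\beta^{(S)}=P_h\beta^{(S)}$ are bounded by fixed multiples of $\|\alpha^{(S)}\|_{L^\infty}$ and $\|\beta^{(S)}\|_{L^\infty}$, uniformly in $h_2$. Finally, the assumptions of Theorem~\ref{thm::boundsjumptV1} provide $|\alpha^{(S)}(v)-\widetilde\alpha^{(S)}(v)|\le\varepsilon|\alpha^{(S)}(v)|$ with $\varepsilon<1$ for all $v$, whence $|\widetilde\alpha^{(S)}(v)|\ge(1-\varepsilon)|\alpha^{(S)}(v)|\ge(1-\varepsilon)\inf_w|\alpha^{(S)}(w)|>0$, so $(\widetilde\alpha^{(L)})^{-1}$ is bounded uniformly in $h_2$ as well.

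Next I would bound the projection errors. On each coarse knot span $\hat\omega_j=[j\hat h_2,(j+1)\hat h_2]$ the datum $\alpha^{(S)}$ is a polynomial of degree at most $2\hat p_2^{(S)}-1$ (cf.\ Remark~\ref{rmk::generalspacegd}) and $\beta^{(S)}$ is a rational function whose denominator $\tau^2$ is a polynomial bounded away from zero (again by regularity of $\f F^{(S)}$); both are therefore in $C^\infty(\hat\omega_j)$, with $(\widetilde p+1)$-th derivatives bounded in $L^\infty(\hat\omega_j)$ by a constant depending only on the spline degrees and on the control points of $\f F^{(L)},\f F^{(R)}$. Applying the local estimate~\eqref{eq::approximation-projector} of $P_h$ on each $\hat\omega_j$ and taking the maximum over $j$ gives
\[
 \|\alpha^{(S)}-\widetilde\alpha^{(S)}\|_{L^\infty([0,1])}\le C h_2^{\widetilde p+1}\max_j\big\|(\alpha^{(S)})^{(\widetilde p+1)}\big\|_{L^\infty(\hat\omega_j)}\le C h_2^{\widetilde p+1},
\]
and analogously $\|\beta^{(S)}-\widetilde\beta^{(S)}\|_{L^\infty([0,1])}\le C h_2^{\widetilde p+1}$, with $C$ depending on $p$ and on the geometry mappings but not on $h_2$.

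To conclude, I would regroup the expressions of Proposition~\ref{prop::jump-representation} so that the projection errors appear explicitly,
\begin{align*}
 E_2^{(L)}&=(\alpha^{(R)})^{-1}\Big(\widetilde\alpha^{(R)}(\widetilde\alpha^{(L)})^{-1}\big(\alpha^{(L)}-\widetilde\alpha^{(L)}\big)-\big(\alpha^{(R)}-\widetilde\alpha^{(R)}\big)\Big), \\
 E_1^{(L)}&=\tau^2(\alpha^{(R)})^{-1}\Big(\widetilde\alpha^{(R)}(\widetilde\alpha^{(L)})^{-1}\big(\beta^{(L)}-\widetilde\beta^{(L)}\big)-\big(\beta^{(R)}-\widetilde\beta^{(R)}\big)\Big),
\end{align*}
and apply the triangle inequality together with the two preceding steps; this yields $\|E_1^{(L)}\|_{L^\infty([0,1])},\|E_2^{(L)}\|_{L^\infty([0,1])}\le C h_2^{\widetilde p+1}$, and the statement with switched sides follows by the symmetry of the construction.

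I expect the only genuine difficulty to be the uniform-in-$h_2$ control of $(\widetilde\alpha^{(L)})^{-1}$ in the first step — everything else is routine bookkeeping with the triangle inequality — and this is exactly what forces the ``$h_2$ small enough'' hypothesis; it is handled cleanly by the relative-error bound recalled after Theorem~\ref{thm::boundsjumptV1}. A secondary point worth spelling out is that the constant in~\eqref{eq::approximation-projector} involves $\|(\alpha^{(S)})^{(\widetilde p+1)}\|_{L^\infty}$ and $\|(\beta^{(S)})^{(\widetilde p+1)}\|_{L^\infty}$, which are finite and geometry-dependent precisely because the gluing data is piecewise smooth (polynomial, resp.\ rational) with respect to the coarse knots $\hat h_2$; this accounts for the dependence of $C$ on the degree $p$ and on $\f F^{(L)},\f F^{(R)}$.
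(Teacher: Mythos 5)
Your proposal is correct and follows essentially the same route as the paper's proof: the same splitting of $E_1^{(L)}$ and $E_2^{(L)}$ into geometry-dependent bounded factors (using \eqref{eq::infinitiy-stable-projector} for $\widetilde{\alpha}^{(R)}$ and the small-$h_2$ relative-error argument for $1/\widetilde{\alpha}^{(L)}$) times the projection errors bounded by \eqref{eq::approximation-projector}, combined via the triangle inequality. The extra detail you give on the piecewise polynomial/rational smoothness of the gluing data over the coarse spans $\hat{\omega}_j$ is a fair elaboration of what the paper leaves implicit, but it is not a different argument.
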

\begin{proof}
Throughout the proof, all norms are to be considered ${L^\infty}$-norms. We have
\begin{align*}
 \left\|E_1^{(L)}\right\| &= \left\|\tau^2\left(\frac{\widetilde{\alpha}^{(R)} (\beta^{(L)}-\widetilde{\beta}^{(L)})}{{\alpha}^{(R)} \widetilde{\alpha}^{(L)} }-\frac{\beta^{(R)}-\widetilde{\beta}^{(R)}}{{\alpha}^{(R)}}\right)\right\|\\
 & \leq \|\tau \|^2 \left\|\frac{1}{{\alpha}^{(R)}} \right\| \left(  \left\|\frac{1}{\widetilde{\alpha}^{(L)}}\right\| \|\widetilde{\alpha}^{(R)}\| \|\beta^{(L)}-\widetilde{\beta}^{(L)}\| +\|\beta^{(R)}-\widetilde{\beta}^{(R)}\|\right)
\end{align*}
and similarly
\begin{align*}
 \left\|E_2^{(L)}\right\| \leq \left\|\frac{1}{{\alpha}^{(R)}} \right\| \left(  \left\|\frac{1}{\widetilde{\alpha}^{(L)}}\right\| \|\widetilde{\alpha}^{(R)}\| \|\alpha^{(L)}-\widetilde{\alpha}^{(L)}\| +\|\alpha^{(R)}-\widetilde{\alpha}^{(R)}\|\right).
\end{align*}
The terms
\[
  \|\tau \|^2 \quad\mbox{ and }\quad \left\|\frac{1}{{\alpha}^{(R)}} \right\|
\]
are bounded by definition and depend only on the geometry mapping $\f F^{(R)}$. Due to~\eqref{eq::infinitiy-stable-projector} the term $\|\widetilde{\alpha}^{(R)}\|$ is bounded from above by $C\|{\alpha}^{(R)}\|$, which in turn depends only on $\widetilde{p}$ and on $\f F^{(R)}$. Estimate~\eqref{eq::approximation-projector} yields 
\[
 \|\alpha^{(S)}-\widetilde{\alpha}^{(S)}\| \leq C_{\f F^{(S)}} h_2^{\widetilde{p}+1},
\]
and
\[
 \|\beta^{(S)}-\widetilde{\beta}^{(S)}\| \leq C_{\f F^{(S)}} h_2^{\widetilde{p}+1},
\]
where the constants depend only on $\widetilde{p}$ and on $\f F^{(S)}$. What remains to be shown is an estimate from above for $\left\|{1}/{\widetilde{\alpha}^{(L)}}\right\|$. Due to the regularity of patch $\f F^{(L)}$, we have 
\[
 0 < c \leq |\alpha^{(L)} (v)|.
\]
As $h_2\rightarrow 0$ we have $|\alpha^{(L)} (v) - \widetilde{\alpha}^{(L)} (v)| \rightarrow 0$. Hence, for all $\varepsilon>0$ there exists a $\delta>0$ such that for all $h<\delta$ we have $|\alpha^{(L)} (v) - \widetilde{\alpha}^{(L)} (v)| < c \, \varepsilon \leq |\alpha^{(L)} (v)| \, \varepsilon$. We have
\[
 (1-\varepsilon)|{\alpha}^{(L)} (v)| \leq |{\alpha}^{(L)} (v)| - |{\alpha}^{(L)} (v)-\widetilde{\alpha}^{(L)} (v)| \leq |\widetilde{\alpha}^{(L)} (v)|
\]
and consequently 
\[
 \left\|\frac{1}{\widetilde{\alpha}^{(L)}}\right\| \leq \frac{1}{1-\varepsilon} \left\| \frac{1}{{\alpha}^{(L)}} \right\|,
\]
which is bounded from above by a constant that depends only on $\f F^{(L)}$, if $|\alpha^{(L)} (v) - \widetilde{\alpha}^{(L)} (v)| < |\alpha^{(L)} (v)| \, \varepsilon$ for some $\varepsilon<1$. Such an $\varepsilon$ exists if $h_2$ is sufficently small. This concludes the proof.
\end{proof}

Based on the projection $P_h$ which interpolates the boundary, see~\eqref{eq::requirement-boundary}, it follows that $\widetilde{\alpha}^{(S)} (\bar{v}) = \alpha^{(S)} (\bar{v})$ and $\widetilde{\beta}^{(S)} (\bar{v}) = \beta^{(S)} (\bar{v})$ and consequently $E_1^{(S)} (\bar{v}) = E_2^{(S)} (\bar{v}) = 0$ for $\bar{v} \in \{0,1\}$. In addition, it may happen that for certain points $v \in (0,1)$ the approximated gluing data $\widetilde{\alpha}^{(S)} (v)$ and/or $\widetilde{\beta}^{(S)} (v)$ match with the gluing data $\alpha^{(S)} (v)$ and/or $\beta^{(S)} (v)$. In this case, the corresponding factor also vanishes at $v$. We can now proof Theorem~\ref{thm::boundsjumptV1}.
\begin{proof}[Proof of Theorem~\ref{thm::boundsjumptV1}]
From Proposition~\ref{prop::jump-representation} and \ref{prop::bounds-E1-E2} we obtain
\begin{align*}
 \| \jump{\partial_{\f n} \varphi_h} \|^2_{L^2(\Gamma)} = \int_{\Gamma}  \jump{\partial_{\f n} \varphi_h}^2 \mathrm{d}\f x &= \int_{\Gamma}  \left(E_1^{(L)} \, \partial_{\f t_0} \varphi_h + E_2^{(L)} \, \partial_{\f n} \varphi_h^{(L)}\right)^2 \mathrm{d}\f x \\
 &\leq \max(\|E_1^{(L)}\|^2_{L^\infty},\|E_2^{(L)}\|^2_{L^\infty}) \int_{\Gamma}  \left(\partial_{\f t_0} \varphi_h + \partial_{\f n} \varphi_h^{(L)}\right)^2 \mathrm{d}\f x \\
 &\leq C' h_2^{2\widetilde{p}+2} \| \nabla \varphi_h^{(L)} \|^2_{L^2(\Gamma)},
\end{align*}
where $C'$ depends only on $\widetilde{p}$ and on the geometry parametrizations, but not on $h$ or $\varphi_h$. For reasons of symmetry, the same bound is valid for $\varphi_h^{(R)}$. Using a standard Sobolev trace inequality, cf.~\cite{adams2003sobolev}, yields
\[
 \| \nabla \varphi_h^{(L)} \|_{L^2(\Gamma)} \leq C \| \varphi_h^{(L)} \|_{H^2(\Omega^{(L)})},
\]
which concludes the proof.
\end{proof}

\subsection{A special case: vanishing jumps}\label{sec::vanishing-jump}

From Proposition~\ref{prop::jump-representation} it can be concluded that the jump of the normal derivative is zero at $v \in [0,1]$ if the factors $E_1^{(L)} (v) = E_2^{(L)} (v) = 0$, which is the case when
\begin{align}
  \beta(v) = {\alpha}^{(L)}(v)\widetilde{\beta}^{(R)}(v)-{\alpha}^{(R)}(v)\widetilde{\beta}^{(L)}(v) \quad \text{ and } \quad \widetilde{\alpha}^{(S)} (v) = c\cdot \alpha^{(S)}(v) \qquad \forall \; S \in \{L,R\}, \label{eq::gluingdataequalapproxgluingdata}
\end{align}
for some $c\neq 0$. Obviously, a sufficient condition is $\widetilde{\alpha}^{(S)}(v) = \alpha^{(S)}(v)$ and $\widetilde{\beta}^{(S)}(v) = \beta^{(S)}(v)$ for $S\in \{L,R\}$. The condition holds for all $v \in [0,1]$ if the gluing data satisfies $\alpha^{(S)},\beta^{(S)}\in \mathcal{S}(\widetilde{p},(\widetilde{r},\hat{r}),(h_2,\hat{h}_2))$, see~\eqref{projection::preservespline}. Hence, we obtain the following Proposition.
\begin{prop}
  Let $\alpha^{(S)}$ and $\beta^{(S)}$ be in the spline space $\mathcal{S}(\widetilde{p},(\widetilde{r},\hat{r}),(h_2,\hat{h}_2))$ and \eqref{eq::gluingdataequalapproxgluingdata} be satisfied. Then the interface space $\widetilde{\mathcal{A}}_{\Gamma}$ is $C^1$-smooth and, consequently, $\widetilde{\mathcal{V}}^{1}_h \subseteq C^{1}(\Omega)$.
\end{prop}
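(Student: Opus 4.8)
The plan is to reduce the statement to Proposition~\ref{prop::jump-representation} by first observing that, under the stated hypotheses, the projection leaves the gluing data unchanged. Since $\alpha^{(S)},\beta^{(S)}\in\mathcal{S}(\widetilde{p},(\widetilde{r},\hat{r}),(h_2,\hat{h}_2))$, which (the regularity index $\hat{r}$ being at least $\hat{r}_2-1$) is a subspace of the range $\mathcal{S}(\widetilde{p},(\widetilde{r},\hat{r}_2-1),(h_2,\hat{h}_2))$ of $P_h$, the spline-preservation property~\eqref{projection::preservespline} yields $\widetilde{\alpha}^{(S)}=P_h(\alpha^{(S)})=\alpha^{(S)}$ and $\widetilde{\beta}^{(S)}=P_h(\beta^{(S)})=\beta^{(S)}$ for $S\in\{L,R\}$. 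Together with~\eqref{eq::betaLbetaR} this shows that the condition~\eqref{eq::gluingdataequalapproxgluingdata} then holds with $c=1$, so the two hypotheses are consistent and the first part of~\eqref{eq::gluingdataequalapproxgluingdata} carries no extra information here.

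Next I would substitute $\widetilde{\alpha}^{(S)}=\alpha^{(S)}$ and $\widetilde{\beta}^{(S)}=\beta^{(S)}$ into the expressions for $E_1^{(L)}$ and $E_2^{(L)}$ given in Proposition~\ref{prop::jump-representation}. Each summand there carries a factor $\beta^{(S)}-\widetilde{\beta}^{(S)}$ or $\alpha^{(S)}-\widetilde{\alpha}^{(S)}$, hence $E_1^{(L)}\equiv E_2^{(L)}\equiv 0$ on $[0,1]$. Consequently Proposition~\ref{prop::jump-representation} gives $\jump{\partial_{\f{n}}\varphi_h}(\f{x})=0$ for every $\varphi_h\in\widetilde{\mathcal{A}}_{\Gamma}$ and every $\f{x}\in\Gamma$, i.e.\ the normal derivative is continuous across the interface.

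It remains to upgrade this to $C^1$-continuity of the whole space. Evaluating~\eqref{eq::approxc1basisfunction} at $u=0$ and using $b_{1,1}^{(S)}(0)=1$, $b_{1,2}^{(S)}(0)=0$, the traces along $\Gamma$ of the generators of $\widetilde{\mathcal{A}}_{\Gamma,+}$ and of $\widetilde{\mathcal{A}}_{\Gamma,-}$ equal $b_j^{+}(v)$ and $0$, respectively, independently of $S$; hence every $\varphi_h\in\widetilde{\mathcal{A}}_{\Gamma}$ is $C^0$ across $\Gamma$, and its tangential derivative, being $\tau^{-1}$ times the $v$-derivative of this common trace, is continuous as well. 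Combined with the vanishing normal jump, $\nabla\varphi_h$ is continuous across $\Gamma$, so $\varphi_h\in C^1(\Omega)$, that is, $\widetilde{\mathcal{A}}_{\Gamma}\subset C^1(\Omega)$. Since $\mathcal{A}^{(S)}_{\circ}\subset\mathcal{V}^1_h\subset C^1(\Omega)$ by construction, the direct sum $\widetilde{\mathcal{V}}^{1}_h=\widetilde{\mathcal{A}}_{\Gamma}\oplus\mathcal{A}^{(L)}_{\circ}\oplus\mathcal{A}^{(R)}_{\circ}$ lies in $C^1(\Omega)$.

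As for difficulty, there is essentially no analytic obstacle: the statement is a corollary of Proposition~\ref{prop::jump-representation} once one knows $P_h$ fixes the gluing data. (Alternatively, one could argue directly that the functions~\eqref{eq::approxc1basisfunction} with $\widetilde{\alpha}^{(S)},\widetilde{\beta}^{(S)}$ replaced by $\alpha^{(S)},\beta^{(S)}$ satisfy the exact condition~\eqref{eq::C1conditiontest}, being the standard $C^1$ generators from~\cite{kapl2017dimension}.) The only point that genuinely needs care is the spline-space bookkeeping — checking the inclusion of $\mathcal{S}(\widetilde{p},(\widetilde{r},\hat{r}),(h_2,\hat{h}_2))$ in the range of $P_h$ so that~\eqref{projection::preservespline} applies — and noting that the freedom of the constant $c$ in~\eqref{eq::gluingdataequalapproxgluingdata} is irrelevant beyond $c=1$.
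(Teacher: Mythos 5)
Your proposal is correct and follows essentially the same route as the paper: the paper also derives the result from the spline-preservation property~\eqref{projection::preservespline} (so that $\widetilde{\alpha}^{(S)}=\alpha^{(S)}$, $\widetilde{\beta}^{(S)}=\beta^{(S)}$), the vanishing of $E_1^{(L)}$, $E_2^{(L)}$, and Proposition~\ref{prop::jump-representation} giving a zero normal-derivative jump. Your additional remarks (that~\eqref{eq::gluingdataequalapproxgluingdata} then holds with $c=1$, and the explicit check of the continuous trace and tangential derivative) only make explicit what the paper leaves implicit.
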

Moreover, when the gluing data are linear polynomials, i.e., $\alpha^{(S)},\beta^{(S)} \in \mathbb{P}^1$, then the interface basis functions are by definition in the space $\mathcal{S}(p^{(S)}_1,r^{(S)}_1,h^{(S)}_1) \otimes \mathcal{S}(p_2,r_2,h_2)$ for $1\leq r_2 \leq p_2 - 2$, cf.~\eqref{eq::spaceforbasisfunctions}. Those geometries are also known as analysis-suitable $G^1$ geometries, as introduced in~\cite{collin2016analysis}. We repeat the definition here.
\begin{defi}\label{def::asg1}
 A two-patch geometry is called \emph{analysis-suitable $G^1$}, in short AS-$G^1$, if there exists linear gluing data, i.e., $\alpha^{(L)},\alpha^{(R)},\beta^{(L)},\beta^{(R)} \in \mathbb{P}^1$, such that~\eqref{eq::betaLbetaR} and~\eqref{eq::exactgluingdatag1smooth} are satisfied. 
\end{defi}
Note that the AS-$G^1$ condition requires the existence of linear gluing data. However, we define $\alpha^{(L)}$, $\alpha^{(R)}$ through the formulas in~\eqref{def::alpha_beta}, with $\gamma \equiv 1$, and $\beta^{(L)}$, $\beta^{(R)}$ through~\eqref{def::beta_S}. Hence, even though the gluing data we compute is not linear, there might exist linear gluing data for a different choice of $\gamma$ or a different splitting of $\beta = \alpha^{(L)} \beta^{(R)} - \alpha^{(R)} \beta^{(L)}$. Bilinear patches are AS-$G^1$ and they yield linear gluing data for $\gamma \equiv 1$. Thus, we have the following for bilinear patches.
\begin{prop}
   If the geometry is AS-$G^1$ with $\gamma \equiv 1$, e.g., if both patches are bilinear, then $\widetilde{\mathcal{V}}^{1}_h \subseteq \mathcal{V}^1_h$. \label{prop::asthanc1}
\end{prop}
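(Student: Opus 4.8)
The plan is to show that when the geometry is AS-$G^1$ with $\gamma \equiv 1$, the approximated gluing data produced by the projector $P_h$ coincides exactly with the exact gluing data, so that the interface functions $\widetilde{B}_{(j,\pm)}$ are genuinely $C^1$-smooth and no approximation error is introduced. The key observation is that for a bilinear patch $\f F^{(S)}$, the partial derivative $\partial_u \f F^{(S)}(0,v)$ is an affine (degree one) function of $v$, and the tangent $\f t(v) = \partial_v \f F^{(S)}(0,v)$ is constant in $v$. Plugging these into the formulas~\eqref{def::alpha_beta} and~\eqref{def::beta_S}, one sees that $\alpha^{(S)}(v)$ is a polynomial of degree at most one in $v$, and likewise $\beta^{(S)}(v)$ in~\eqref{def::beta_S} is a ratio whose numerator $\partial_u \f F^{(S)}(0,v)\cdot \f t_0(v)$ is affine while the denominator $\tau(v) = \|\f t(v)\|$ is a nonzero constant; hence $\beta^{(S)} \in \mathbb{P}^1$ as well. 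More generally, the same conclusion follows from the hypothesis "AS-$G^1$ with $\gamma \equiv 1$": by Definition~\ref{def::asg1} the data $\alpha^{(L)},\alpha^{(R)},\beta^{(L)},\beta^{(R)}$ defined through~\eqref{def::alpha_beta}--\eqref{def::beta_S} are already linear.

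Next I would invoke the spline-preservation property~\eqref{projection::preservespline} of $P_h$. Since $\mathbb{P}^1 \subseteq \mathcal{S}(\widetilde{p},(\widetilde{r},\hat{r}_2-1),(h_2,\hat{h}_2))$ for any admissible $\widetilde{p}\geq 1$ and $\widetilde{r}\geq 1$ (a degree-one polynomial lies in every spline space of degree $\geq 1$, regardless of the regularity at interior knots, because it is globally $C^\infty$), the linear gluing data are fixed points of the projector: $\widetilde{\alpha}^{(S)} = P_h(\alpha^{(S)}) = \alpha^{(S)}$ and $\widetilde{\beta}^{(S)} = P_h(\beta^{(S)}) = \beta^{(S)}$ for $S\in\{L,R\}$. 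Consequently $\widetilde{\beta} = \widetilde{\alpha}^{(L)}\widetilde{\beta}^{(R)} - \widetilde{\alpha}^{(R)}\widetilde{\beta}^{(L)} = \alpha^{(L)}\beta^{(R)} - \alpha^{(R)}\beta^{(L)} = \beta$, so condition~\eqref{eq::gluingdataequalapproxgluingdata} holds for every $v\in[0,1]$ (with $c=1$). Then I would apply the Proposition immediately preceding Definition~\ref{def::asg1}: since $\alpha^{(S)},\beta^{(S)}\in\mathbb{P}^1 \subseteq \mathcal{S}(\widetilde{p},(\widetilde{r},\hat{r}),(h_2,\hat{h}_2))$ and~\eqref{eq::gluingdataequalapproxgluingdata} is satisfied, $\widetilde{\mathcal{A}}_\Gamma$ is $C^1$-smooth, hence $\widetilde{\mathcal{V}}^{1}_h \subseteq C^1(\Omega)$; combined with $\widetilde{\mathcal{V}}^{1}_h \subseteq \mathcal{V}^0_h$ (which holds here because with linear gluing data and $r^+ = p_2-1$, $r^- = p_2-2$ the interface functions lie in the isogeometric space, cf.~\eqref{eq::spaceforbasisfunctions}) this gives $\widetilde{\mathcal{V}}^{1}_h \subseteq \mathcal{V}^0_h \cap C^1(\Omega) = \mathcal{V}^1_h$.

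The main obstacle is the verification that $\widetilde{\mathcal{V}}^{1}_h$ is contained in $\mathcal{V}^0_h$ in this setting, i.e., that it really is an isoparametric (or at least conforming) space with no regularity loss: one must check that with linear $\widetilde{\beta}^{(S)}$ the product $\widetilde{\beta}^{(S)}(v)(b_j^+)'(v)$ in~\eqref{eq::approxc1basisfunction} and the product $\widetilde{\alpha}^{(S)}(v)b_j^-(v)$ in the second line stay inside $\mathcal{S}(p_2,(r_2,\hat{r}_2),(h_2,\hat{h}_2))$ — this is exactly the computation of $p_2^*$ and $r_2^*$ after~\eqref{eq::spaceforbasisfunctions} with $\widetilde{p}=\widetilde{r}=1$, $p^+=p_2$, $p^-=p_2-1$, giving $p_2^* = p_2$ and $r_2^* = \min(1,p_2-2,p_2-2)\geq 1$, together with the fact that multiplication by a linear function does not create new knots. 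Everything else is a direct chain of citations to results already established in the excerpt, so the proof is short once this containment is made explicit.
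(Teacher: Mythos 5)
Your proposal is correct and follows essentially the same route as the paper, which states this proposition without a separate proof as a direct consequence of the preceding discussion: the gluing data computed with $\gamma \equiv 1$ via \eqref{def::alpha_beta} and \eqref{def::beta_S} is linear (e.g., for bilinear patches), hence reproduced exactly by $P_h$ through \eqref{projection::preservespline}, so the preceding proposition yields $C^1$-smoothness, while the degree/regularity count following \eqref{eq::spaceforbasisfunctions} places the interface functions in the underlying isogeometric space. The only remark is that, exactly like the paper, you implicitly rely on $r_2 \leq p_2-2$ (and, in the bilinear/B\'ezier situation emphasized here, on the absence of interior geometry knots, so that the $\hat{r}_2-1$ regularity of $\mathcal{S}^-$ causes no conflict) for the containment in $\mathcal{V}^0_h$, and on reading the hypothesis as saying that the specific gluing data from \eqref{def::alpha_beta}--\eqref{def::beta_S} is linear rather than merely that some linear gluing data exists.
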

Hence, when the parametrization is piecewise bilinear, then the isogeometric concept remains and the jump vanishes. It is shown numerically in~\cite{collin2016analysis, kapl2017dimension, kapl2019argyris}, that the convergence rates for general AS-$G^1$ geometries are optimal when solving fourth order problems. Moreover, optimal approximation error bounds were proven in~\cite{kapl2020family} for bilinear patches.

\subsection{Functions with vanishing trace at the domain boundary}\label{sec::smoothboundary}

In this section, we characterize the space of interior functions $\widetilde{\mathcal{V}}^1_{h,0} = \widetilde{\mathcal{V}}^1_{h} \cap H^1_0(\Omega)$ of functions with vanishing trace on the domain boundary. We denote its complement, used to impose non-homogeneous boundary conditions, by $\widetilde{\mathcal{V}}^1_{h,\partial\Omega}$. By definition, cf. in~\eqref{eq::approxc1basisfunction}, one can see that the interface basis functions which are, in general, not vanishing at the boundary corresponding to $\bar{v}=0$ that is $\{\f F^{(L)}(u,0)\}_{u\in[0,1]} \cup \{\f F^{(R)}(u,0)\}_{u\in[0,1]}$, are $\widetilde{B}_{(1,+)}$, $\widetilde{B}_{(2,+)}$ and $\widetilde{B}_{(1,-)}$. The pull-backs of those three functions satisfy 
\begin{align*}
 \widetilde{f}^{(S)}_{(1,+)} (u,0) &= b_{1,1}^{(S)} (u) + b_{1,2}^{(S)} (u) \left( 1 + \widetilde{\beta}^{(S)} (0) (b_1^+) ' (0) \frac{h^{(S)}_1}{p_1^{(S)}} \right), \\
 \widetilde{f}^{(S)}_{(2,+)} (u,0) &= \widetilde{\beta}^{(S)} (0) (b_2^+) ' (0) \frac{h^{(S)}_1}{p_1^{(S)}} b_{1,2}^{(S)} (u), \\
  \widetilde{f}^{(S)}_{(1,-)} (u,0) &= \widetilde{\alpha}^{(S)} (0) \frac{h^{(S)}_1}{p_1^{(S)}} b_{1,2}^{(S)} (u).
\end{align*}
Analogously, we can define the interface functions that do not vanish at the boundary corresponding to $\bar{v}=1$. To obtain the correct subspace $\mbox{span}\{\widetilde{B}_{(1,+)},\widetilde{B}_{(2,+)},\widetilde{B}_{(1,-)}\} \cap H^1_{0}(\Omega)$ we need to compute the kernel of the space, evaluated at the domain boundary, that is
\begin{align*}
 \text{ker} \langle \{ \widetilde{B}_{(1,+)}, \widetilde{B}_{(2,+)}, \widetilde{B}_{(1,-)} \} \rangle = \left\{ \varphi = \lambda_1 \widetilde{B}_{(1,+)} + \lambda_2 \widetilde{B}_{(2,+)} + \lambda_3 \widetilde{B}_{(1,-)},\mbox{ with }\; \lambda_i \in \mathbb{R}, \, i \in \{1,2,3\}: \varphi |_{\partial\Omega} = 0 \right\}.
\end{align*}
We can describe any function $\varphi \in \text{ker} \langle \{ \widetilde{B}_{(1,+)}, \widetilde{B}_{(2,+)}, \widetilde{B}_{(1,-)} \} \rangle$ by its pull-back to a patch and obtain
\begin{align*}
\lambda_1 \widetilde{f}^{(S)}_{(1,+)}(u,0) + \lambda_2 \widetilde{f}^{(S)}_{(2,+)}(u,0) + \lambda_3 \widetilde{f}^{(S)}_{(1,-)}(u,0) = 0
\end{align*}
for all $u\in [0,1]$ and $S\in\{L,R\}$. For $u = 0$ we have $\widetilde{f}^{(S)}_{(1,+)} (0,0) = 1$ and $\widetilde{f}^{(S)}_{(2,+)} (0,0) = \widetilde{f}^{(S)}_{(1,-)} (0,0) = 0$. As a consequence, we have
\[
\lambda_1 = 0. 
\]
As a result we obtain the conditions
\begin{align*}
\lambda_2 \widetilde{f}^{(S)}_{(2,+)}(u,0) + \lambda_3 \widetilde{f}^{(S)}_{(1,-)}(u,0) = 0
\end{align*}
for all $u \in [0,1]$ and $S\in\{L,R\}$. Using the definitions of the functions, we get
\begin{align*}
\lambda_2 \widetilde{\beta}^{(S)} (0) (b_2^+) ' (0) \frac{h^{(S)}_1}{p_1^{(S)}} b_{1,2}^{(S)} (u) + \lambda_3 \widetilde{\alpha}^{(S)} (0) \frac{h^{(S)}_1}{p_1^{(S)}} b_{1,2}^{(S)} (u) = 0.
\end{align*}
Since this equation needs to be satisfied for all $u$, we can cancel out the factor 
$\frac{h^{(S)}_1}{p_1^{(S)}} b_{1,2}^{(S)} (u)$ and get
\begin{align*}
\lambda_2 \widetilde{\beta}^{(L)} (0) \frac{p_2}{h_2} + \lambda_3 \widetilde{\alpha}^{(L)} (0) = 0
\end{align*}
and
\begin{align*}
\lambda_2 \widetilde{\beta}^{(R)} (0) \frac{p_2}{h_2} + \lambda_3 \widetilde{\alpha}^{(R)} (0) = 0.
\end{align*}
Consequently,
\begin{align*}
\lambda_3 = -\lambda_2 \frac{\widetilde{\beta}^{(L)} (0)}{\widetilde{\alpha}^{(L)} (0)} \frac{p_2}{h_2} = -\lambda_2 \frac{\widetilde{\beta}^{(R)} (0)}{\widetilde{\alpha}^{(R)} (0)} \frac{p_2}{h_2}.
\end{align*}
Hence, we obtain a non-trivial solution $(\lambda_1,\lambda_2,\lambda_3) \neq (0,0,0)$ if and only if
\begin{align*}
\frac{\widetilde{\beta}^{(L)} (0)}{\widetilde{\alpha}^{(L)} (0)} \frac{p_2}{h_2} = \frac{\widetilde{\beta}^{(R)} (0)}{\widetilde{\alpha}^{(R)} (0)} \frac{p_2}{h_2} = \psi,
\end{align*}
which is equivalent to $\widetilde{\beta}(0) = 0$, for $\widetilde{\beta}(v) \coloneqq \widetilde{\alpha}^{(L)} (v)\widetilde{\beta}^{(R)} (v) - \widetilde{\alpha}^{(R)} (v)\widetilde{\beta}^{(L)} (v)$. The kernel is then given by $(\lambda_1,\lambda_2,\lambda_3)=(0,c,-c \,\psi)$, $c \in \mathbb{R}$. Since the projector $P_h$ in~\eqref{eq:projector-gluingdata} interpolates at the boundary, we have $\widetilde{\beta}(0) = \beta(0)$. To summarize, we need to distinguish two cases:
\begin{enumerate}
 \item If ${\beta}(0) = 0$, which is equivalent to 
\begin{align}
 \partial_u \f F^{(L)} (0,0) \parallel \partial_u \f F^{(R)} (0,0), \label{con::partialuFS}
\end{align}
we have that $\text{ker} \langle \{ \widetilde{B}_{(1,+)}, \widetilde{B}_{(2,+)}, \widetilde{B}_{(1,-)} \} \rangle$ is spanned by 
\begin{align*}
\widetilde{B}_{(2,+)}^* \coloneqq \widetilde{B}_{(2,+)} - \psi \widetilde{B}_{(1,-)},
\end{align*}
 \item Otherwise, namely ${\beta}(0) \neq 0$, the kernel is empty.
\end{enumerate}
By properly modifying the functions $\widetilde{\beta}^{(L)}$ and $\widetilde{\beta}^{(R)}$, i.e., replacing $\widetilde{\beta}^{(S)}$ by
\[
 \widetilde{\beta}^{(S,*)} (v) = \widetilde{\beta}^{(S)} (v) - \frac{\widetilde{\beta}^{(L)} (0) }{ \widetilde{\alpha}^{(L)} (0) } \widetilde{\alpha}^{(S)} (v),
\]
for $S\in\{L,R\}$, we achieve $\psi = 0$. This simplifies the definition of the kernel, which is then spanned by $\widetilde{B}_{(2,+)}^* = \widetilde{B}_{(2,+)}$. Thus, if the boundary is smooth at $\bar{v}=0$, the function $\widetilde{B}_{(2,+)}^*$ belongs to $\widetilde{\mathcal{V}}^1_{h,0}$, whereas the functions $\widetilde{B}_{(1,+)}$ and $\widetilde{B}_{(1,-)}$ belong to $\widetilde{\mathcal{V}}^1_{h,\partial\Omega}$. If the boundary is not smooth, all functions belong to $\widetilde{\mathcal{V}}^1_{h,\partial\Omega}$. A similar modification can be achieved if the boundary is smooth at $\bar{v}=1$, that is, if $\beta(1)=0$, or if the boundary is smooth on both ends of the interface.

In Figure~\ref{fig::SeperationBoundary} an example of the two cases is depicted. There, the boundary is smooth at the lower end of the interface and non-smooth at the upper end.

\begin{figure}[h!]
 \centering
 \includegraphics[width=0.5\textwidth]{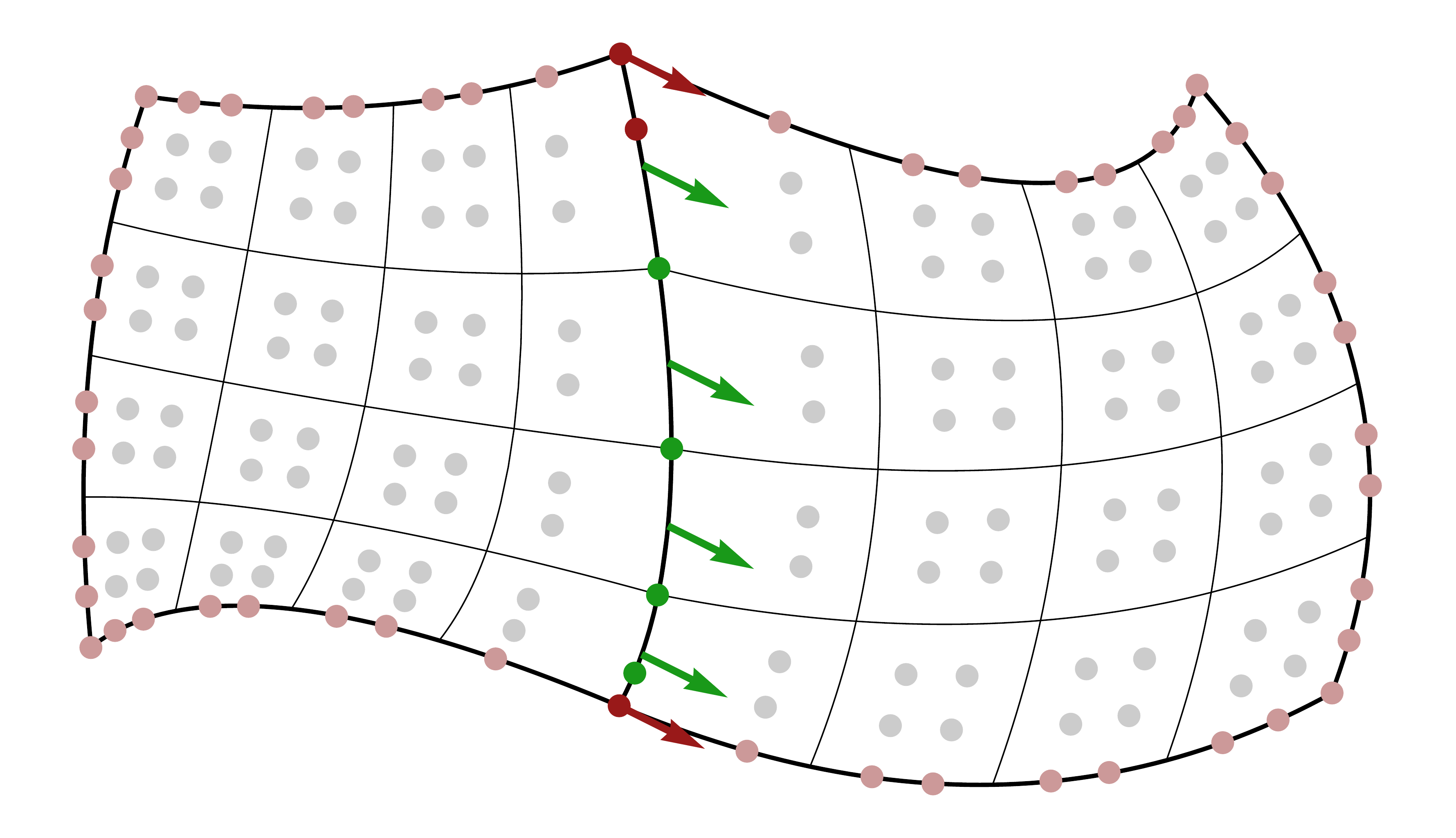}
 \caption{  Seperation of basis functions: the bullets away from the interface denote standard patch basis functions ${B}_{\f j}^{(S)}$, the bullets along the interface denote the trace functions $\widetilde{B}_{(j,+)}$ and the arrows the transversal derivative functions $\widetilde{B}_{(j,-)}$. Those basis functions depicted in red span the boundary space $\widetilde{\mathcal{V}}^1_{h,\partial\Omega}$ while the green and gray ones span the interior space $\widetilde{\mathcal{V}}^1_{h,0}$.} \label{fig::SeperationBoundary}
\end{figure}

\begin{rmk}
Note that the condition in~\eqref{eq::requirement-boundary}, requiring the projector $P_{h}$ to interpolate at the boundary, may be dropped. However, in that case the approximate gluing data $\widetilde{\beta}(v)= \widetilde{\alpha}^{(L)} (v)\widetilde{\beta}^{(R)} (v) - \widetilde{\alpha}^{(R)} (v)\widetilde{\beta}^{(L)} (v)$ in general does not satisfy $\widetilde{\beta}(\bar{v})= \beta(\bar{v})$ for $\bar{v}\in\{0,1\}$. We then have
\begin{align*}
 |\widetilde{\beta}(\bar{v}) - \beta(\bar{v})| \leq \norm{\widetilde{\beta} - \beta}_{L^\infty} \leq C h_2^{\widetilde{p}+1},
\end{align*}
for some constant $C$ that depends on the degree and on the exact geometry, similar to Proposition~\ref{prop::bounds-E1-E2}. Thus, one has to compute an approximate kernel up to an $h$-dependent tolerance.\label{rem::approximate-kernel}
\end{rmk}

\section{The two-patch formulation and discretization} \label{sec::twopatchformulation}

In this section, the discrete variational problem is stated. We consider a sequence of approximate $C^1$ spaces $\{\widetilde{\mathcal{V}}^1_h\}_h$, with ${\mathcal{V}}^0_{h} \subset {\mathcal{V}}^0_{h'}$ for $h'<h$. Note that the spaces $\widetilde{\mathcal{V}}^1_h$ are in general not nested, only their underlying $C^0$ spaces. The behaviour of the approximate solution as the mesh size $h$ goes to zero is studied numerically for different choices of polynomial degrees in Section~\ref{sec::numericalexperiments}.

\subsection{The non-conforming two-patch formulation}

We introduce the space
\begin{equation*}
\begin{array}{ll}
    \mathcal{X}_0 & \coloneqq \left\{ \psi \in H^1_0 (\Omega) \;|\; \psi|_{\Omega^{(S)}} \in H^2 (\Omega^{(S)}) \mbox{ for }S\in\{L,R\}  \right\},
\end{array}
\end{equation*} 
similar to a bent Sobolev space as introduced in~\cite{bazilevs2006isogeometric}, equipped with the norm
\begin{align*}
  \norm{\psi}^2_\mathcal{X} \coloneqq  \sum_{S \in \{L,R\} } \| \psi \|^2_{H^2(\Omega^{(S)})}.
\end{align*}
Let $\average{\circ}_\Gamma = \frac{1}{2}(\circ^{(L)}+\circ^{(R)})|_\Gamma$ denote the average and $\jump{\circ}_\Gamma =(\circ^{(R)}-\circ^{(L)})|_\Gamma$ denote the jump across the interface. We have the following problem.
\begin{problem}\label{problem:discontinuous-weakformulation}
 Find $\varphi \in \mathcal{V}_0$ such that
\begin{align}
  a_*(\varphi,\psi) = \langle F,\psi \rangle \quad \forall \psi \in \mathcal{X}_0,
\end{align}
where
\begin{align*}
a_*(\varphi,\psi) & \coloneqq \sum_{S \in \{L,R\}} \int_{\Omega^{(S)}} \Delta \varphi \; \Delta \psi \; \mathrm{d}\f{x} + \int_{\Gamma} \average{\Delta \varphi}_\Gamma \; \jump{\partial_{\f n} \psi}_\Gamma \; \mathrm{d}s
\end{align*}
and
\begin{equation*}
\langle F,\psi \rangle  = \sum_{S \in \{L,R\}}  \int_{\Omega^{(S)}} f \, \psi \; \mathrm{d}\f{x} +  \sum_{S \in \{L,R\}} \int_{\partial\Omega^{(S)}\cap \partial\Omega} g_1 \; \partial_{{\f n}^{(S)}} \psi \; \mathrm{d}s.
\end{equation*}
\end{problem}

\begin{rmk}
The bilinear form in Problem~\ref{problem:discontinuous-weakformulation} is non-symmetric. To obtain a symmetric bilinear form, one can   symmetrize and penalize, obtaining the following problem: Find $\varphi \in \mathcal{X}_0$ such that
\begin{align}
  a_{**}(\varphi,\psi) = \langle F,\psi \rangle \quad \forall \; \psi \in \mathcal{X}_0, \label{problem:full-discontinuous-weakformulation}
\end{align}
with
\begin{align*}
a_{**}(\varphi,\psi) & \coloneqq a_*(\varphi,\psi) + \int_{\Gamma} \jump{\partial_{\f n} \varphi}_\Gamma \;  \average{\Delta \psi}_\Gamma \; \mathrm{d}s + \rho \int_{\Gamma} \jump{\partial_{\f n} \varphi}_\Gamma \; \jump{\partial_{\f n} \psi}_\Gamma \; \mathrm{d}s,
\end{align*}
where $\rho \geq 0$ is a prescribed penalty parameter. The solutions of Problem~\ref{problem:discontinuous-weakformulation} and \eqref{problem:full-discontinuous-weakformulation} are also equivalent if $\jump{\partial_{\f n} \varphi}_\Gamma = 0$.
\end{rmk}

\subsection{The approximate $C^1$ isogeometric discretization}

We consider the discrete space $\widetilde{\mathcal{V}}^1_{h,0}$. Under the assumptions summarized in Figure~\ref{fig::summaryrequirements}, we have $\widetilde{\mathcal{V}}^1_{h,0} \subset \mathcal{X}_0$. We then solve the following discrete problem.
\begin{problem}\label{problem:discrete-problem}
Find $\varphi_h \in \widetilde{\mathcal{V}}^1_{h,0}$ such that
\begin{align}
  a_h(\varphi_h,\psi_h) = \langle F,\psi_h \rangle \quad \forall \; \psi_h \in \widetilde{\mathcal{V}}^1_{h,0}, \label{eq::discrete-weakformulation}
\end{align}
where 
\begin{equation*}
a_h(\varphi_h,\psi_h) = \sum_{S\in\{L,R\}}\int_{\Omega^{(S)}} \Delta \varphi_h \Delta \psi_h \; \mathrm{d}\f{x}
\end{equation*}
and 
\begin{equation*}
\langle F,\psi_h \rangle  = \sum_{S\in\{L,R\}}\int_{\Omega^{(S)}} f \psi_h \; \mathrm{d}\f{x} + \sum_{S \in \{L,R\}} \int_{\partial \Omega \cap \partial \Omega^{(S)}} g_2 \partial_n \psi_h \; \mathrm{d}s.
\end{equation*}
Furthermore, we have, by definition,
\begin{equation*}
\langle F,\psi_h \rangle  = \int_{\Omega} f \psi_h \; \mathrm{d}\f{x} + \int_{\partial \Omega} g_2 \partial_n \psi_h \; \mathrm{d}s.
\end{equation*}
\end{problem}
  Note that this discrete problem is not an exact discretization of Problem~\ref{problem:discontinuous-weakformulation}, since we omit the jump term, which vanishes in the limit by construction. The speed of convergence then depends on the bounds on the jump term as in Theorem~\ref{thm::boundsjumptV1}, which depend on the approximation of the gluing data. We expect that the two-patch model problem with approximate $C^1$-smoothness at the interface satisfies the following a-priori error estimate.
\begin{con} \label{con::aprioirierror}
Let the assumptions of Theorem~\ref{thm::boundsjumptV1} be satisfied and let $p_1^{(S)} \geq 2$, $p_2\geq 3$ and $r_1,r_2,\widetilde{r}\geq 1$. We set $q = \min (p_1^{(L)}-1, p_1^{(R)}-1 , p_2-1, \widetilde{p} + 1)$ if $\widetilde{\alpha}^{(S)},\widetilde{\beta}^{(S)} \notin \mathbb{P}^{\widetilde{p}}$ and $q = \min (p_1^{(L)}-1, p_1^{(R)}-1 , p_2-1)$, otherwise. Let $\varphi$ be the solution of Problem~\ref{problem:model-problem}, with $\varphi \in H^{5/2+\varepsilon}(\Omega)$ and $\varphi|_{\Omega^{(S)}}\in H^{q+2} (\Omega^{(S)})$, and let $\varphi_h$ be the solution of Problem~\ref{problem:discrete-problem}, then we have
\begin{align*}
  \norm{\varphi - \varphi_h}_{\mathcal{X}} \leq C h^{q} \sum_{S \in \{L,R\} } \norm{\varphi}_{H^{q+2}(\Omega^{(S)})}.
\end{align*}
\end{con}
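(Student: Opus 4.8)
The plan is to view Problem~\ref{problem:discrete-problem} as a \emph{nonconforming} Galerkin discretization of Problem~\ref{problem:discontinuous-weakformulation} and to apply a Strang-type (second Strang lemma) estimate. Under the assumptions collected in Figure~\ref{fig::summaryrequirements} we have $\widetilde{\mathcal{V}}^1_{h,0}\subset\mathcal{X}_0$, and $a_h$ is obtained from $a_*$ by dropping the interface term. With $\gamma_h$ denoting the coercivity constant of $a_h$ on $\widetilde{\mathcal{V}}^1_{h,0}$ with respect to $\norm{\cdot}_{\mathcal{X}}$ and using $\norm{a_h}\le 1$, the second Strang lemma yields
\begin{align*}
 \norm{\varphi-\varphi_h}_{\mathcal{X}} \leq \left(1+\frac{1}{\gamma_h}\right)\inf_{v_h\in\widetilde{\mathcal{V}}^1_{h,0}}\norm{\varphi-v_h}_{\mathcal{X}} + \frac{1}{\gamma_h}\sup_{0\neq w_h\in\widetilde{\mathcal{V}}^1_{h,0}}\frac{|\langle F,w_h\rangle-a_h(\varphi,w_h)|}{\norm{w_h}_{\mathcal{X}}}.
\end{align*}
It then remains to (i) bound $\gamma_h$ from below uniformly in $h$, (ii) estimate the consistency error (the supremum term), and (iii) estimate the best-approximation error. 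I would also first record that Theorem~\ref{thm::boundsjumptV1} remains valid for all $w_h\in\widetilde{\mathcal{V}}^1_{h,0}$, not just for $w_h\in\widetilde{\mathcal{A}}_\Gamma$: the patch-interior parts have vanishing value \emph{and} first derivative at $\Gamma$, hence do not affect $\jump{\partial_{\f n}w_h}$, so the pointwise representation of Proposition~\ref{prop::jump-representation} and the bound $\norm{\jump{\partial_{\f n}w_h}}_{L^2(\Gamma)}\le C h_2^{\widetilde{p}+1}\norm{w_h}_{\mathcal{X}}$ carry over.

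For \emph{uniform coercivity}, the idea is to prove a broken elliptic-regularity inequality: for all $w\in\mathcal{X}_0$,
\begin{align*}
 \norm{w}^2_{\mathcal{X}} \leq C\Bigl(\sum_{S\in\{L,R\}}\norm{\Delta w}^2_{L^2(\Omega^{(S)})}+\norm{\jump{\partial_{\f n}w}}^2_{L^2(\Gamma)}\Bigr),
\end{align*}
which follows from the standard bound $\norm{\cdot}_{H^2(\Omega^{(S)})}\leq C\,\norm{\Delta\,\cdot}_{L^2(\Omega^{(S)})}$ on $H^2(\Omega^{(S)})\cap H^1_0$, combined with a patch-matching argument exploiting $\jump{w}_\Gamma=0$ and $w|_{\partial\Omega}=0$ (a reference-domain computation on $\widehat\Omega$ is an alternative route, handling corner values separately). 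Applied to $w_h\in\widetilde{\mathcal{V}}^1_{h,0}$ and combined with the extended Theorem~\ref{thm::boundsjumptV1}, the interface term is absorbed for $h$ small, giving $\norm{w_h}^2_{\mathcal{X}}\le C\, a_h(w_h,w_h)$, i.e.\ $\gamma_h\ge\gamma_0>0$. For the \emph{consistency error}, since $\varphi$ solves Problem~\ref{problem:discontinuous-weakformulation} we have $\langle F,w_h\rangle-a_h(\varphi,w_h)=a_*(\varphi,w_h)-a_h(\varphi,w_h)=\int_\Gamma\average{\Delta\varphi}_\Gamma\,\jump{\partial_{\f n}w_h}_\Gamma\,\mathrm{d}s$; the hypothesis $\varphi\in H^{5/2+\varepsilon}(\Omega)$ gives a well-defined single-valued trace $\Delta\varphi|_\Gamma$ with $\norm{\Delta\varphi}_{L^2(\Gamma)}\le C\norm{\varphi}_{H^{5/2+\varepsilon}(\Omega)}$, so Cauchy--Schwarz together with Theorem~\ref{thm::boundsjumptV1} bounds this term by $C h_2^{\widetilde{p}+1}\norm{\varphi}_{H^{5/2+\varepsilon}(\Omega)}$, which is $O(h^q)$ in the non-polynomial gluing-data case (and the term vanishes identically when the gluing data are polynomials of degree $\widetilde{p}$, consistent with $q$ dropping the $\widetilde p+1$ contribution).

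The core of the argument is the \emph{best-approximation estimate} $\inf_{v_h\in\widetilde{\mathcal{V}}^1_{h,0}}\norm{\varphi-v_h}_{\mathcal{X}}\le C h^q\sum_S\norm{\varphi}_{H^{q+2}(\Omega^{(S)})}$, for which I would construct an explicit quasi-interpolant $\Pi_h\varphi\in\widetilde{\mathcal{V}}^1_{h,0}$ in the spirit of~\cite{kapl2019argyris,kapl2020family}: pick the $\widetilde{\mathcal{A}}_{\Gamma,+}$-coefficients from a stable spline quasi-interpolant of the trace $\varphi|_\Gamma$ in $\mathcal{S}^+$, the $\widetilde{\mathcal{A}}_{\Gamma,-}$-coefficients from a quasi-interpolant in $\mathcal{S}^-$ of the transversal data $(\partial_u f^{(S)}-\widetilde{\beta}^{(S)}\partial_v f^{(S)})/\widetilde{\alpha}^{(S)}$ of a patch-wise quasi-interpolant of $\varphi$ (where $1/\widetilde{\alpha}^{(S)}$ is bounded for $h$ small, exactly as in the proof of Proposition~\ref{prop::bounds-E1-E2}), approximate the remainder, which vanishes to first order at $\Gamma$, in the patch-interior spaces $\mathcal{A}^{(S)}_\circ$, and finally realize the homogeneous boundary condition via the kernel construction of Section~\ref{sec::smoothboundary}, using that $P_h$ interpolates at the endpoints and $\varphi|_{\partial\Omega}=0$. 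Standard univariate spline approximation on each patch contributes $H^2$-errors of orders $h^{p_1^{(S)}-1}$ and $h^{p_2-1}$, while the replacement of the exact gluing data by $\widetilde{\alpha}^{(S)},\widetilde{\beta}^{(S)}$ contributes only an additional $O(h^{\widetilde{p}+1})$. This last point is the main obstacle: a naive differentiation of $\alpha^{(S)}-\widetilde{\alpha}^{(S)}$ and $\beta^{(S)}-\widetilde{\beta}^{(S)}$ loses powers of $h$ in the $H^2$-seminorm, so one must exploit the compatibility of the transversal data of $\varphi$ with the exact $G^1$ relation~\eqref{eq::exactgluingdatag1smooth} to see the gluing-data errors telescope, as in the identity underlying Proposition~\ref{prop::jump-representation}. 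Controlling this, together with the uniform-in-$h$ broken coercivity, is precisely the reason the statement is formulated here as a conjecture; granting both, combining (i)--(iii) gives $\norm{\varphi-\varphi_h}_{\mathcal{X}}\le C h^q\sum_{S}\norm{\varphi}_{H^{q+2}(\Omega^{(S)})}$.
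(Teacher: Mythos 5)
This statement is Conjecture~\ref{con::aprioirierror}: the paper deliberately does not prove it (it is validated only by the numerical experiments of Section~\ref{sec::numericalexperiments}, and the conclusion states explicitly that a complete numerical analysis of the method is beyond the paper's scope), so there is no proof of the paper's to compare yours against. Your proposal is a reasonable roadmap and is consistent with the paper's informal reasoning: the remark after Problem~\ref{problem:discrete-problem} already observes that $a_h$ omits the jump term of $a_*$ and that the convergence speed hinges on the bound of Theorem~\ref{thm::boundsjumptV1}, which is exactly your consistency estimate. Your observation that the jump bound extends from $\widetilde{\mathcal{A}}_\Gamma$ to all of $\widetilde{\mathcal{V}}^1_{h,0}$ is also sound, since the functions in $\mathcal{A}^{(S)}_\circ$ (indices $i_1\geq 3$) have vanishing value and $u$-derivative at $u=0$, so the pointwise representation of Proposition~\ref{prop::jump-representation} and the trace argument go through unchanged.

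However, as a proof the proposal has genuine gaps, and you concede them yourself. First, the uniform coercivity step is not established: the inequality $\norm{w}^2_{\mathcal{X}}\leq C\bigl(\sum_S\norm{\Delta w}^2_{L^2(\Omega^{(S)})}+\norm{\jump{\partial_{\f n}w}}^2_{L^2(\Gamma)}\bigr)$ cannot be obtained patch by patch as sketched, because $w|_{\Omega^{(S)}}$ satisfies no boundary condition on $\Gamma$ (only continuity across it), so the bound $\norm{\cdot}_{H^2(\Omega^{(S)})}\leq C\norm{\Delta\,\cdot}_{L^2(\Omega^{(S)})}$ on $H^2\cap H^1_0$ does not apply to the individual patches; the ``patch-matching argument'' is precisely what would have to be proven, and it is delicate on the geometries the paper actually uses (Examples~III and~IV have corners, where even the global equivalence of $\norm{\Delta\,\cdot}_{L^2}$ with the $H^2$-norm under these boundary conditions is nontrivial). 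Second, the best-approximation estimate in the broken $H^2$-norm is only planned, not carried out: the key difficulty you name — that replacing $\alpha^{(S)},\beta^{(S)}$ by $\widetilde{\alpha}^{(S)},\widetilde{\beta}^{(S)}$ must not cost more than $O(h^{\widetilde{p}+1})$ after two derivatives, which requires exploiting the exact $G^1$ relation~\eqref{eq::exactgluingdatag1smooth} rather than naive differentiation — is exactly the open core of the conjecture, together with the non-nestedness and locally higher degree / reduced regularity of $\widetilde{\mathcal{A}}_\Gamma$. So the proposal identifies the right framework (second Strang lemma, consistency via Theorem~\ref{thm::boundsjumptV1}) but does not close the two steps on which the a-priori estimate actually depends; it should be presented as a proof strategy, not a proof.
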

All requirements for constructing the interface space to obtain an optimal convergence rate are summarized in Figure~\ref{fig::summaryrequirements}.

\begin{figure}[h!]
\centering
\small
\def\repeating#1{\vcenter{\offinterlineskip
  \halign{\vrule\strut\enspace##\hfil\quad\vrule\cr
    \noalign{\hrule}#1\noalign{\hrule}}}}
\def\title#1{\vphantom{$\Big($}\bf#1}
$ 
\repeating{\title{Two-patch geometry:}\cr $\widehat{\mathcal{S}}^{(S)} (\hat{p}_1,\hat{r}_1,\hat{h}_1) \otimes \widehat{\mathcal{S}}^{(S)} (\hat{p}_2,\hat{r}_2,\hat{h}_2)$ \cr \cr Need: \cr $C^0$ at the interface \cr $\hat{r}_1, \hat{r}_2 \geq 1$ \cr $\alpha^{(S)}, \beta^{(S)} \in C^1$\cr $\Rightarrow$ sufficient condition: $\hat{r}_2 \geq 2$\cr}
\mathord{\longrightarrow}
  \repeating{\title{Discrete space:}\cr $\mathcal{S}_1^{(S)} \otimes \mathcal{S}_2^{(S)}$ \cr \cr Need: \cr Matching spaces $\mathcal{S}_2^{(L)} = \mathcal{S}_2^{(R)}$\cr $p_2 \geq 3$ \cr $1 \leq r_1^{(S)} \leq p_1^{(S)} - 1$, $1 \leq r_2 \leq p_2 - 1$\cr}
\mathord{\longrightarrow}
  \repeating{\title{Approx. gluing data:}\cr $\mathcal{S} (\widetilde{p},(\widetilde{r},\hat{r}_2-1),(h_2,\hat{h}_2))$ \cr \cr Need (in general): \cr $\widetilde{p} \geq \min (p_1^{(L)}, p_1^{(R)}, p_2) - 2$ \cr $1 \leq \widetilde{r} \leq \widetilde{p} - 1$ \cr} $
\caption{Summary of the requirements for the spaces. It starts with the given $C^0$-matching, two-patch geometry where the patches meet $C^0$ at the interface. The gluing data needs to be at least $C^1$. A sufficient condition for that is $\hat{r}_2 \geq 2$. Then the discrete space is constructed as in Definition~\ref{defi::subsetwithhat}. Furthermore, $p_2 \geq 3$ is needed for constructing the interface space. For simplicity, we only assume matching spaces at the interface. Next, the space for the approximated gluing data is defined. Here, one needs $\widetilde{p}$ to be sufficiently large and $\widetilde{r}\geq 1$ to obtain an optimal convergence rate.} \label{fig::summaryrequirements}
\end{figure}

\section{Numerical experiments} \label{sec::numericalexperiments}

In the following we perform numerical experiments on four two-patch geometries. In those geometries the patches meet $C^0$ at the interface in accordance with Assumption~\ref{ass::c0conformityatinterface}. On each geometry we solve Problem~\ref{problem:discrete-problem} with the exact solution $u(x,y) = (\cos(4 \pi x) - 1) (\cos(4 \pi y) - 1)$, using the approximate $C^1$ space $\widetilde{\mathcal{V}}_h^1$ described in Section~\ref{sec::constructionapproximateC1basis}. 

Let for simplicity $p = p_1^{(L)} = p_1^{(R)} = p_2$ and $r = r_1^{(L)} = r_1^{(R)} = r_2$. Satisfying Assumption~\ref{ass::minimumregularity} and~\ref{ass::c0conforming}, we have $\widetilde{\mathcal{V}}_h^1 \subset C^0(\Omega)$ and $\widetilde{\mathcal{V}}_h^1 |_{\Omega^{(S)}} \subset C^1(\Omega^{(S)})$, for $S\in\{L,R\}$. The geometries are described in Subsection~\ref{sec::geometriesfornumericaltest}. On each geometry we compute the jump of the normal derivative, satisfying the estimate in Theorem~\ref{thm::boundsjumptV1}. The results are reported in Subsection~\ref{sec::jumperror}. In Subsection~\ref{sec::errorestimate} we present the convergence rates of the error measured in $L^2$-, $H^1$- and $H^2$-norms for various polynomial degrees $\widetilde{p}$ and $p \geq 3$, see Assumption~\ref{ass::minimumdegree}. The observed rates are consistent with Conjecture~\ref{con::aprioirierror}. In Subsection~\ref{sec::regularity}, we conclude the numerical tests with comparisons of varying spline regularity $r$. All tests are implemented within the open-source C++ library G+Smo, cf.~\cite{gismoweb}.

\subsection{The geometries for the numerical tests} \label{sec::geometriesfornumericaltest}

The numerical tests are performed on four two-patch geometries shown in Subfigures~\ref{fig::geo0}-\ref{fig::geo41}. Example~I is an AS-$G^1$ geometry as described in Definition~\ref{def::asg1}, whereas Examples~II,~III and~IV are non AS-$G^1$ geometries. The corresponding exact solutions are shown in Subfigures~\ref{fig::sol0}-\ref{fig::sol41}. The first two Examples~I and~II describe the same domain, but have different parametrizations. While Example~I, which is composed of bilinear patches, has a straight interface, Example~II is composed of bicubic patches and has a curved interface. Example~III and~IV are a geometries which both have a corner at one end of the interface. There the boundary space is modified as described in Subsection~\ref{sec::smoothboundary}. Furthermore, Example~III depicts a quarter of a plate with a circular hole, where the circular arc is approximated by a cubic B-spline curve. The Examples~I,~II and~III are constructed with no internal knots, i.e.\ the geometries are constructed with Bézier patches. Therefore, one can set $\hat{r}_1 = \hat{r}_2 = \infty$ and the gluing data is $C^\infty$-smooth. In contrast, Example~IV is a B-spline geometry with degree $\hat{p}_2=3$ and regularity $\hat{r}_2=2$ ergo the gluing data is only $C^1$-smooth. For each geometry the gluing data is computed as defined in~\eqref{def::alpha_beta} and in~\eqref{def::beta_S}. One can see that the gluing data for Example~I is linear, see Subfigure~\ref{fig::gd0}, while for Examples~II-IV the gluing data is not linear, see Subfigures~\ref{fig::gd1}-\ref{fig::gd41}. 

\begin{figure}[h!]
  \begin{subfigure}[b]{0.24\textwidth}
  \centering
  \includegraphics[width=.8\textwidth]{plots/ex0_mesh.png}
  \subcaption{Example I} \label{fig::geo0}
  \end{subfigure}
  \begin{subfigure}[b]{0.24\textwidth}
  \centering
  \includegraphics[width=.8\textwidth]{plots/ex1_mesh.png}
  \subcaption{Example II} \label{fig::geo1}
  \end{subfigure}
  \begin{subfigure}[b]{0.24\textwidth}
  \centering
  \includegraphics[width=.8\textwidth]{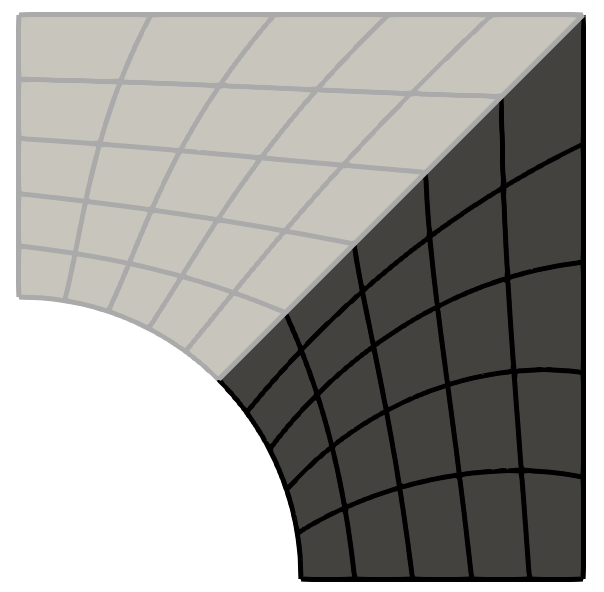}
  \subcaption{Example III} \label{fig::geo8}
  \end{subfigure}
  \begin{subfigure}[b]{0.24\textwidth}
  \centering
  \includegraphics[width=\textwidth]{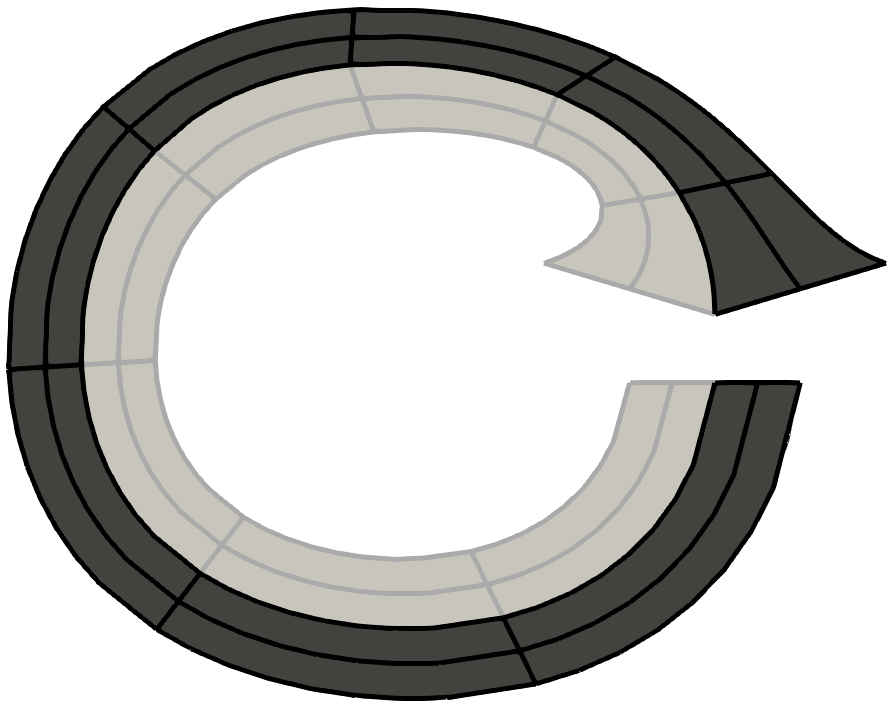}
  \subcaption{Example IV} \label{fig::geo41}
  \end{subfigure}

      \begin{subfigure}[b]{0.24\textwidth}
  \centering
  \includegraphics[width=.8\textwidth]{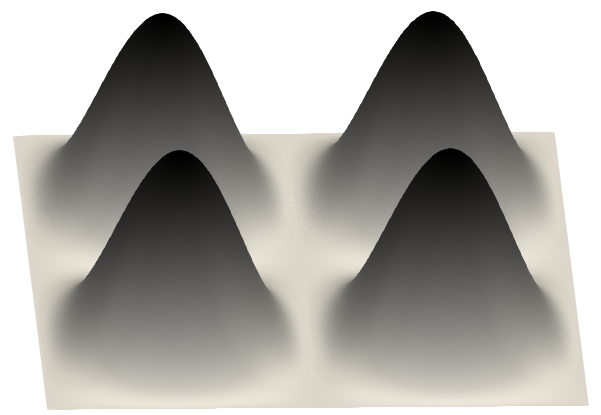}
  \subcaption{Exact solution of Ex.\ I} \label{fig::sol0}
  \end{subfigure}
  \begin{subfigure}[b]{0.24\textwidth}
  \centering
  \includegraphics[width=.8\textwidth]{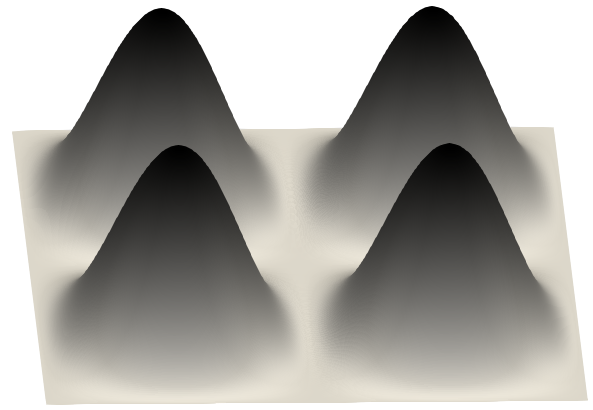}
  \subcaption{Exact solution of Ex.\ II} \label{fig::sol1}
  \end{subfigure}
    \begin{subfigure}[b]{0.24\textwidth}
  \centering
  \includegraphics[width=.92\textwidth]{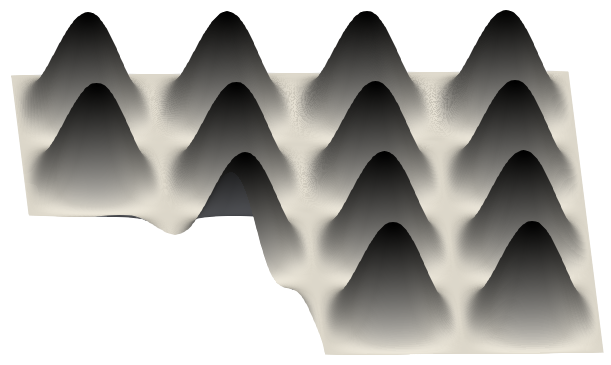}
  \subcaption{Exact solution of Ex.\ III} \label{fig::sol8}
  \end{subfigure}
  \begin{subfigure}[b]{0.24\textwidth}
  \centering
  \includegraphics[width=.8\textwidth]{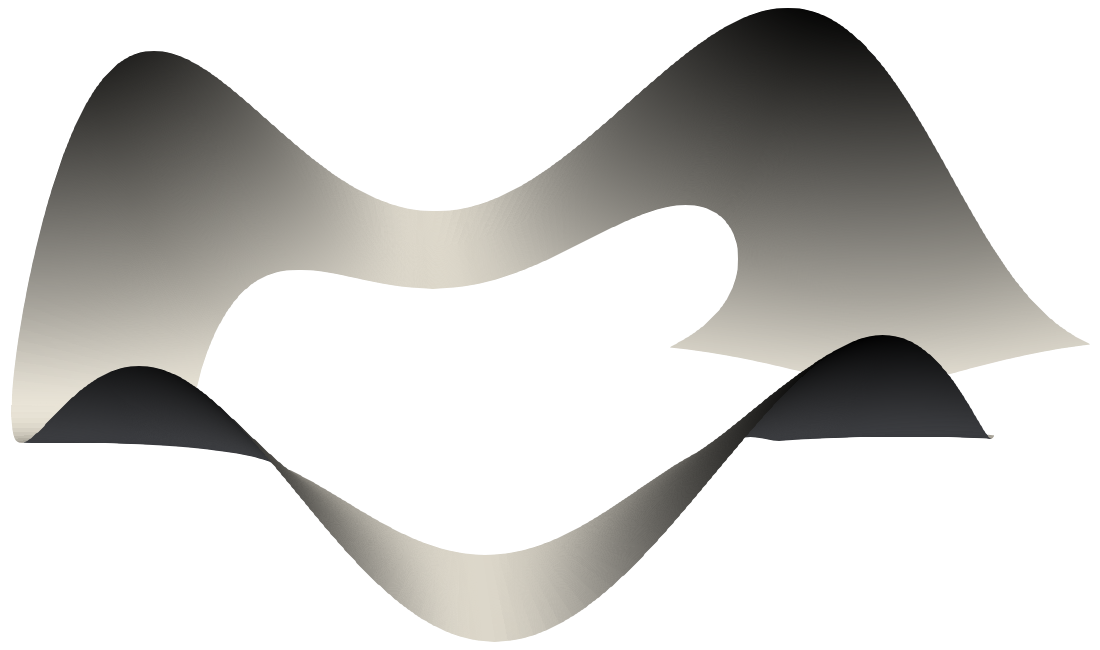}
  \subcaption{Exact solution of Ex.\ IV} \label{fig::sol41}
  \end{subfigure}

\begin{subfigure}[b]{0.24\textwidth}
  \centering
  \includegraphics[width=\textwidth]{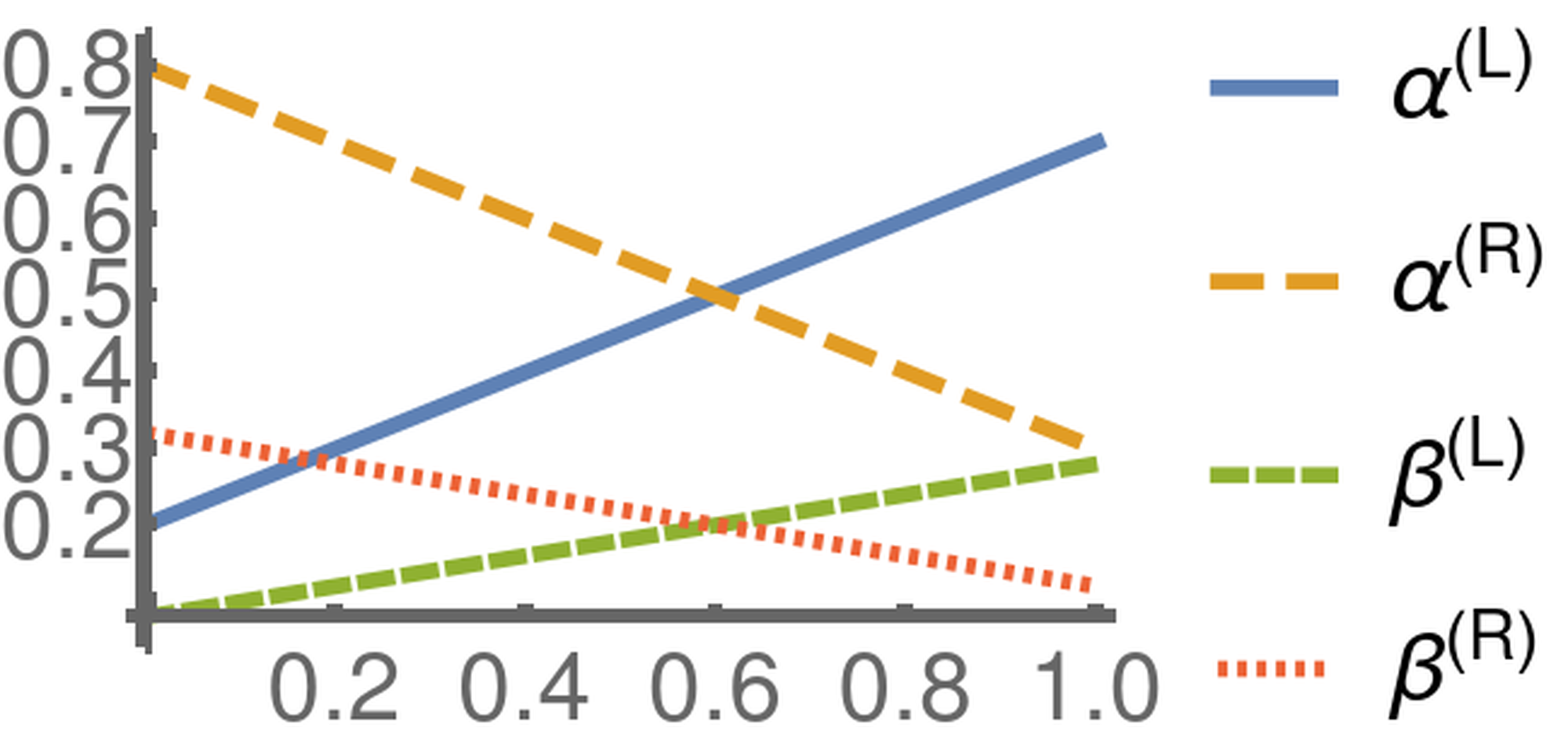}
  \subcaption{Gluing data of Ex.\ I}  \label{fig::gd0}
  \end{subfigure}
  \begin{subfigure}[b]{0.24\textwidth}
  \centering
  \includegraphics[width=\textwidth]{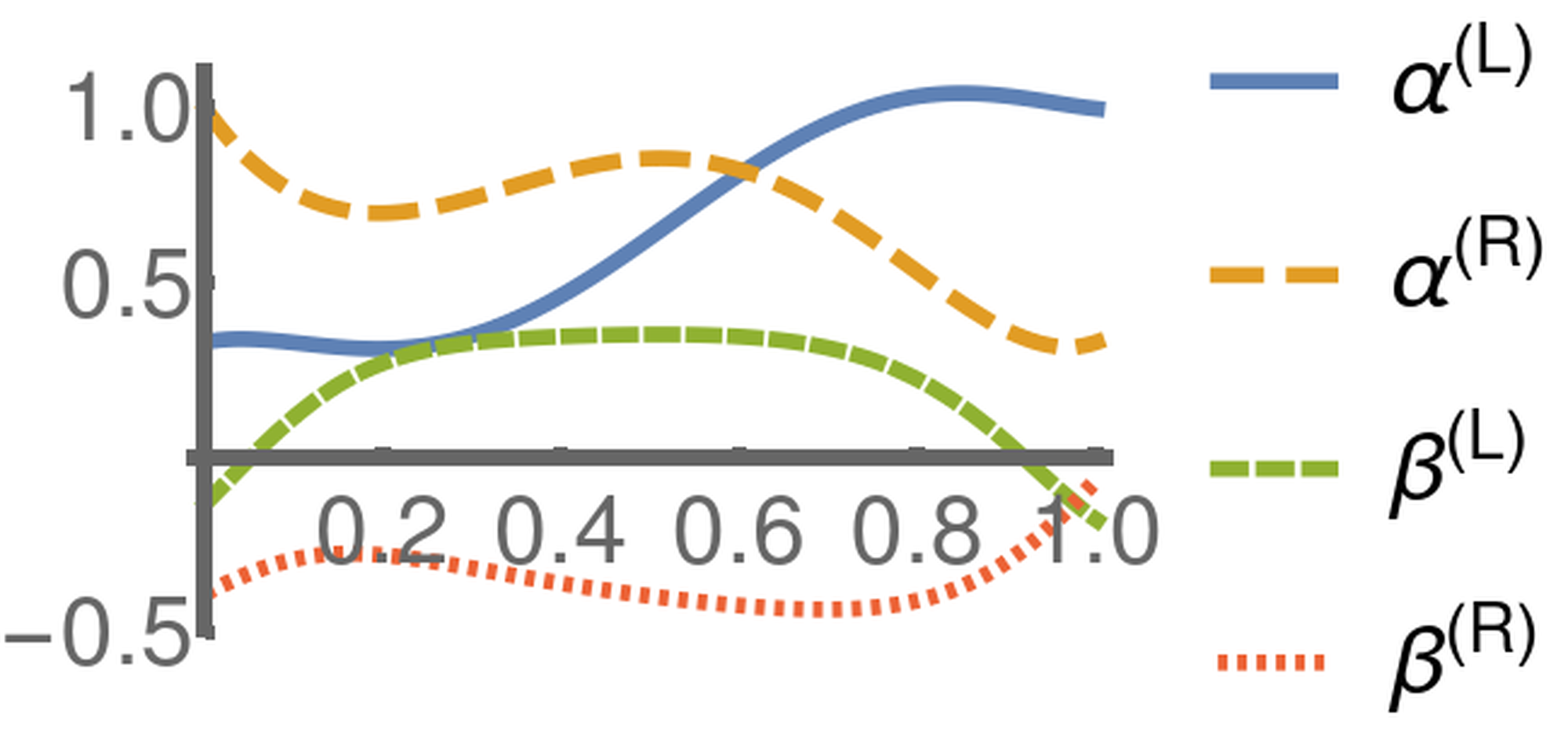}
  \subcaption{Gluing data of Ex.\ II} \label{fig::gd1}
  \end{subfigure}
  \begin{subfigure}[b]{0.24\textwidth}
  \centering
  \includegraphics[width=\textwidth]{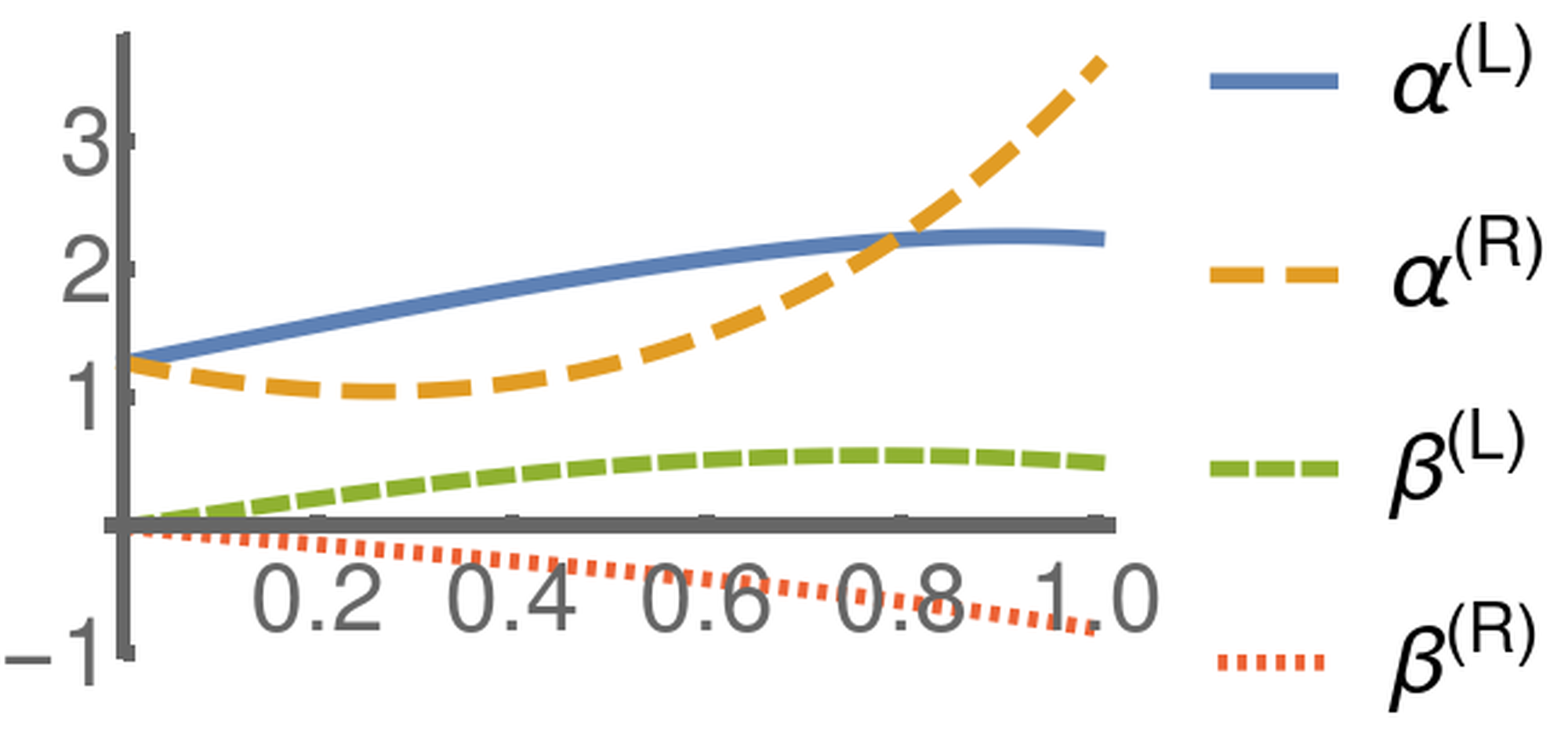}
  \subcaption{Gluing data of Ex.\ III} \label{fig::gd8}
  \end{subfigure}
  \begin{subfigure}[b]{0.24\textwidth}
  \centering
  \includegraphics[width=\textwidth]{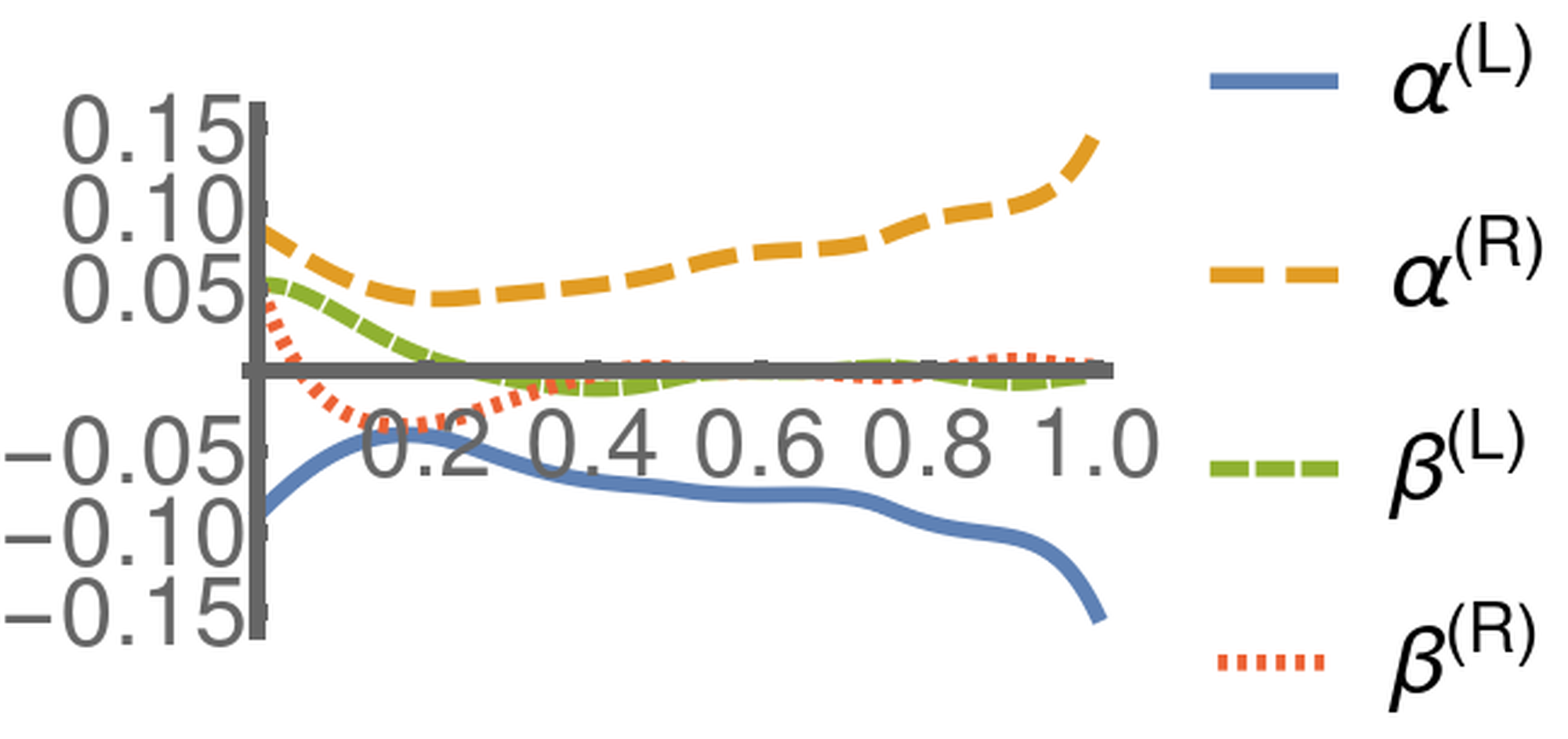}
  \subcaption{Gluing data of Ex.\ IV} \label{fig::gd41}
  \end{subfigure}
  
  \caption{The four choosen geometries for the numerical results with the corresponding exact solutions and gluing data.} 
\end{figure}

\subsection{Convergence of the jump of the normal derivative} \label{sec::jumperror}

In this section, we provide convergence results for the jump of the normal derivative of the discrete solution at the interface, that is, we compute
\[
\| \jump{\partial_{\f n} \varphi_h } \|_{L^2(\Gamma)} = 
\left(\int_\Gamma \left( \partial_{\f n} \varphi_h^{(R)} - \partial_{\f n} \varphi_h^{(L)} \right)^2 \mathrm{d}s \right)^{1/2},
\]
where $\varphi_h$ is the solution of Problem~\ref{problem:discrete-problem}. In all four examples we fixed the polynomial degree to $p=3$ and the regularity to $r=1$. The approximate gluing data is computed for splines of degree $\widetilde{p} \in \{1,2,3,4 \}$ and with maximum regularity $\widetilde{r} = \widetilde{p}-1$. The results are shown in Figure~\ref{fig::jumperror}. Since the first geometry is AS-$G^1$, the normal jump is (numerically) zero (see Subfigure~\ref{fig::plot_gluingData_AS}) and thus the discrete solution is $C^1$-smooth at the interface, up to tolerance. In the other Examples~II-IV the geometry is not AS-$G^1$. The observed convergence rate $\widetilde{p}+1$ of the normal jump is consistent with Theorem~\ref{thm::boundsjumptV1}, as can be seen in Subfigures~\ref{fig::plot_gluingData},~\ref{fig::plot_gluingData_geo8} and~\ref{fig::plot_gluingData_geo41}.

\begin{figure}[h!] 
  \begin{subfigure}[c]{0.24\textwidth}
    \includegraphics[width=\textwidth]{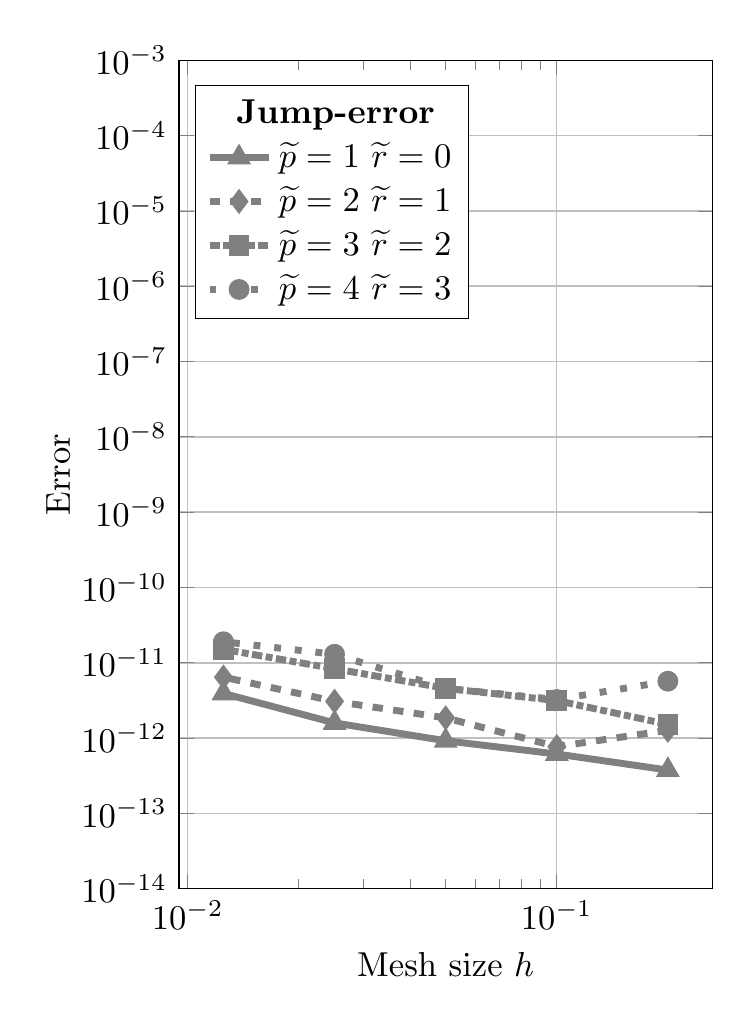} 
\subcaption{The normal jump on Ex.\ I. } \label{fig::plot_gluingData_AS}
\end{subfigure}
\begin{subfigure}[c]{0.24\textwidth}
\centering
    \includegraphics[width=\textwidth]{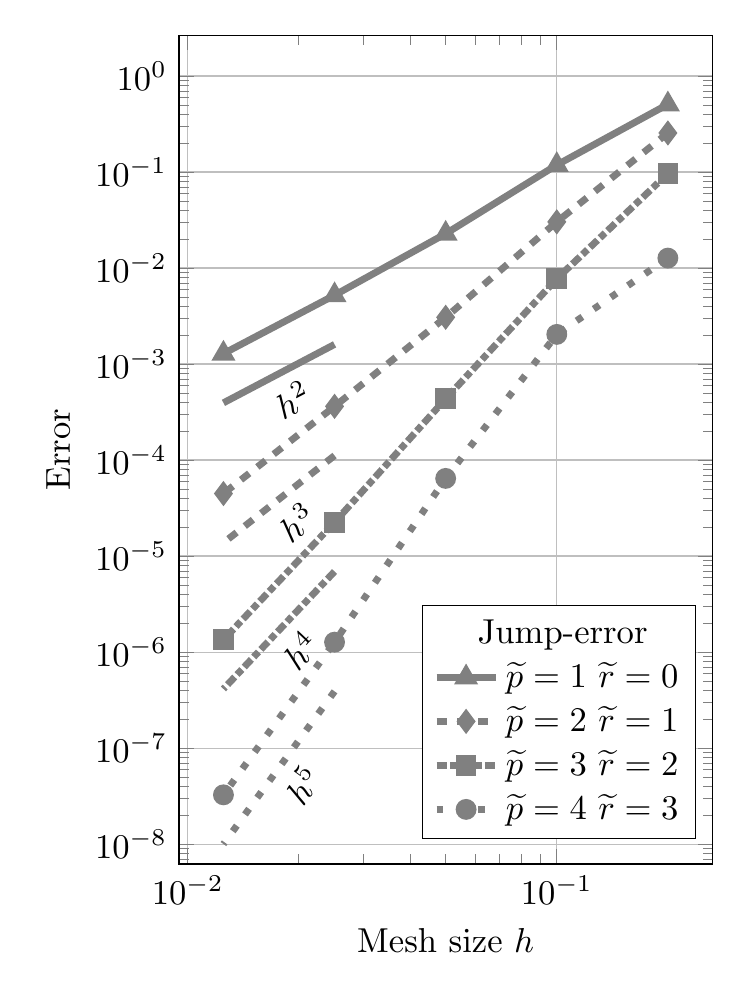}
  \subcaption{The normal jump on Ex.\ II.} \label{fig::plot_gluingData}
\end{subfigure}
\begin{subfigure}[c]{0.24\textwidth}
\centering
    \includegraphics[width=\textwidth]{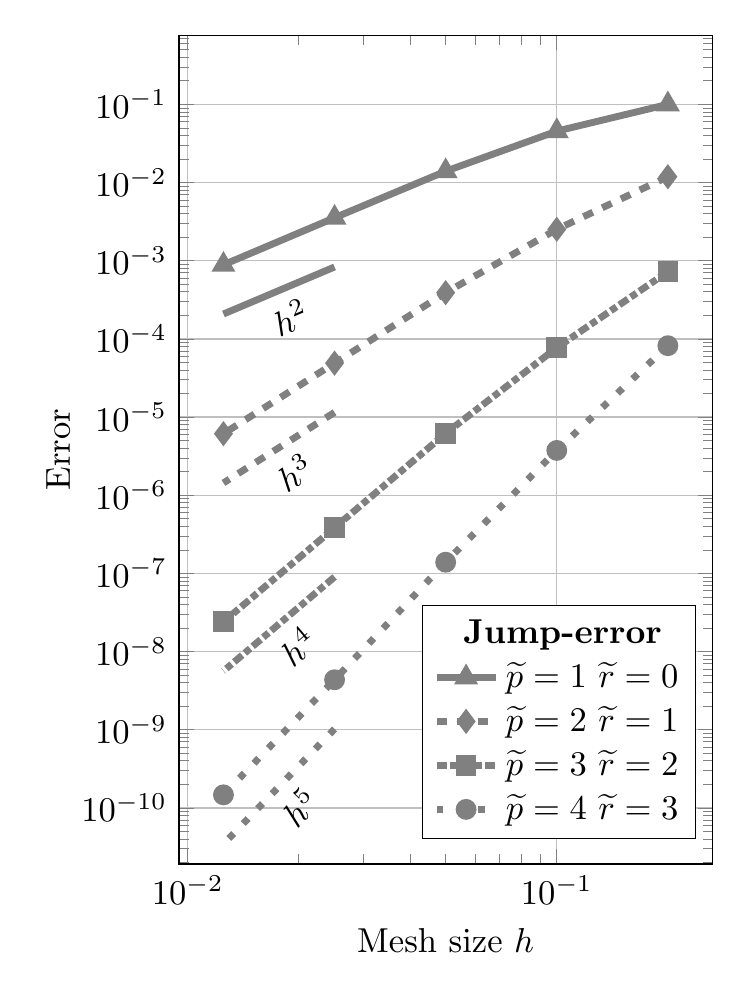}
  \subcaption{The normal jump on Ex.\ III.} \label{fig::plot_gluingData_geo8}
\end{subfigure}
\begin{subfigure}[c]{0.24\textwidth}
\centering
    \includegraphics[width=\textwidth]{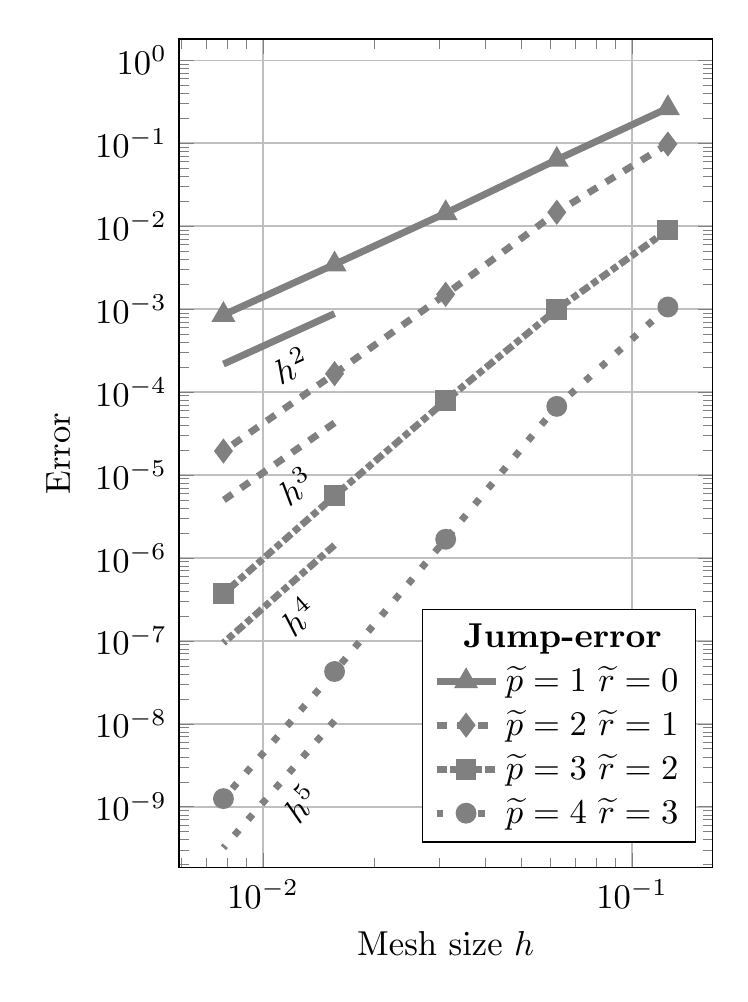}
  \subcaption{The normal jump on Ex.\ IV.} \label{fig::plot_gluingData_geo41}
\end{subfigure}
  \caption{The jump of the normal derivative at the interface for Examples I-IV.} \label{fig::jumperror}
\end{figure}

\subsection{Dependence of convergence rates on approximation of gluing data} \label{sec::errorestimate}

We compare the convergence rates of the error measured in the $L^2$-, $H^1$- and $H^2$-norms for varying polynomial degree $p$ of the spline space and varying polynomial degree $\widetilde{p}$ of the approximate gluing data. We expect that the approximate gluing data must be of degree $\widetilde{p} \geq p - 2$, such that the convergence rates are optimal, as stated in Conjecture~\ref{con::aprioirierror}.

Therefore, we plot the $L^2$-, $H^1$- and $H^2$-errors for Examples~I to~IV for polynomial degrees $p \in \{ 3,4,5\}$. The approximate gluing data is constructed with the lowest polynomial degree $\widetilde{p} = \max(p-2,2)$ and highest possible regularity $\widetilde{r} = \widetilde{p}-1$ to achieve the smallest number of degrees of freedom for the approximate $C^1$ construction. The results are summarized in Figure~\ref{fig::ex1error}.

For the AS-$G^1$ geometry in Example I, we obtain optimal convergence rates for all polynomial degrees $p$. The rate is independent of the degree of the approximate gluing data, which follows from the fact that the gluing data is linear and the normal jump vanishes for all $\widetilde{p}\geq 1$. Hence, according to Proposition~\ref{prop::asthanc1} one obtains an exactly $C^1$-smooth space. The errors are plotted in Subfigures~\ref{fig::plot_gluingData_error_AS},~\ref{fig::plot_gluingData_error_p4_AS} and~\ref{fig::plot_gluingData_error_p5_AS} for degrees $p = 3$, $p = 4$ and $p = 5$, respectively.

While all errors converge optimally for Example~I this is not the case for Examples~II,~III and~IV, which we study in the following. One can see that for those non AS-$G^1$ geometries the rates are not optimal for any degree $p$ if the gluing data is approximated with $(\widetilde{p},\widetilde{r}) = (1,0)$. The reason for this is that the space $\widetilde{\mathcal{A}}_{\Gamma}$ of interface functions is not conforming, as it is not $C^1$ in a neighborhood of the interface (see~\eqref{eq::spaceforbasisfunctions}). Hence, the space violates Assumption~\ref{ass::minimumregularity} which leads to non-optimal convergence rates.

Note that for $p=3$ the convergence rates are optimal if the gluing data is approximated with $\widetilde{p} \geq 2$, see Subfigures~\ref{fig::plot_gluingData_error},~\ref{fig::plot_gluingData_error_geo8} and~\ref{fig::plot_gluingData_error_geo41}. Increasing the degree for the approximate gluing data does not reduce the errors significantly as the curves overlap.

A similar behaviour can be observed for polynomial degree $p=4$, as can be seen in Subfigures~\ref{fig::plot_gluingData_error_p4},~\ref{fig::plot_gluingData_error_p4_geo8} and~\ref{fig::plot_gluingData_error_p4_geo41}. Optimal rates are obtained for $\widetilde{p} \geq 2$.

For polynomial degree $p=5$ approximating the gluing data with splines with $\widetilde{p} = 2$ is not enough. One requires at least $\widetilde{p} = 3$ to reach an optimal error rate as shown in Subfigures~\ref{fig::plot_gluingData_error_p5},~\ref{fig::plot_gluingData_error_p4_geo8} and~\ref{fig::plot_gluingData_error_p4_geo41}. In Subfigure~\ref{fig::plot_gluingData_error_p5_geo8}, one cannot observe a difference between $\widetilde{p} = 2$ and $\widetilde{p} = 3$. A possible reason is that the normal jump for Example IV is significant smaller than the $H^2$-error. We expect that as the mesh becomes more refined the consistency error from the jump of the normal derivative will dominate the $H^2$-error and the convergence rate will deteriorate, similar to the behaviour for Examples~II and~IV.

\begin{figure}[hp!]  
  \begin{subfigure}[c]{0.24\textwidth}
  \centering
  \includegraphics[width=\textwidth]{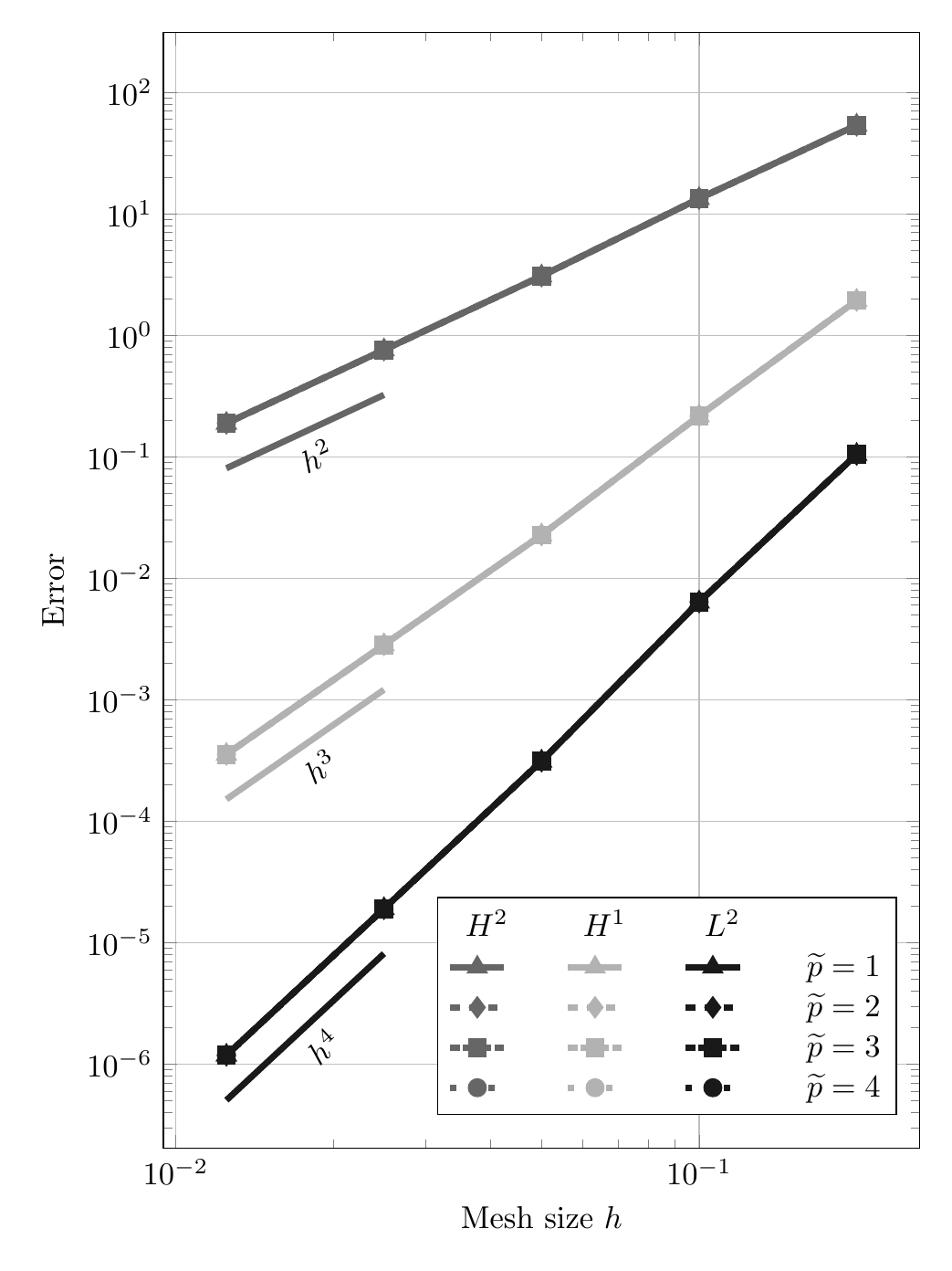}
  \subcaption{Ex.\ I with $p = 3$, $r = 1$.} \label{fig::plot_gluingData_error_AS}
  \end{subfigure}
  \begin{subfigure}[c]{0.24\textwidth}
  \centering
  \includegraphics[width=\textwidth]{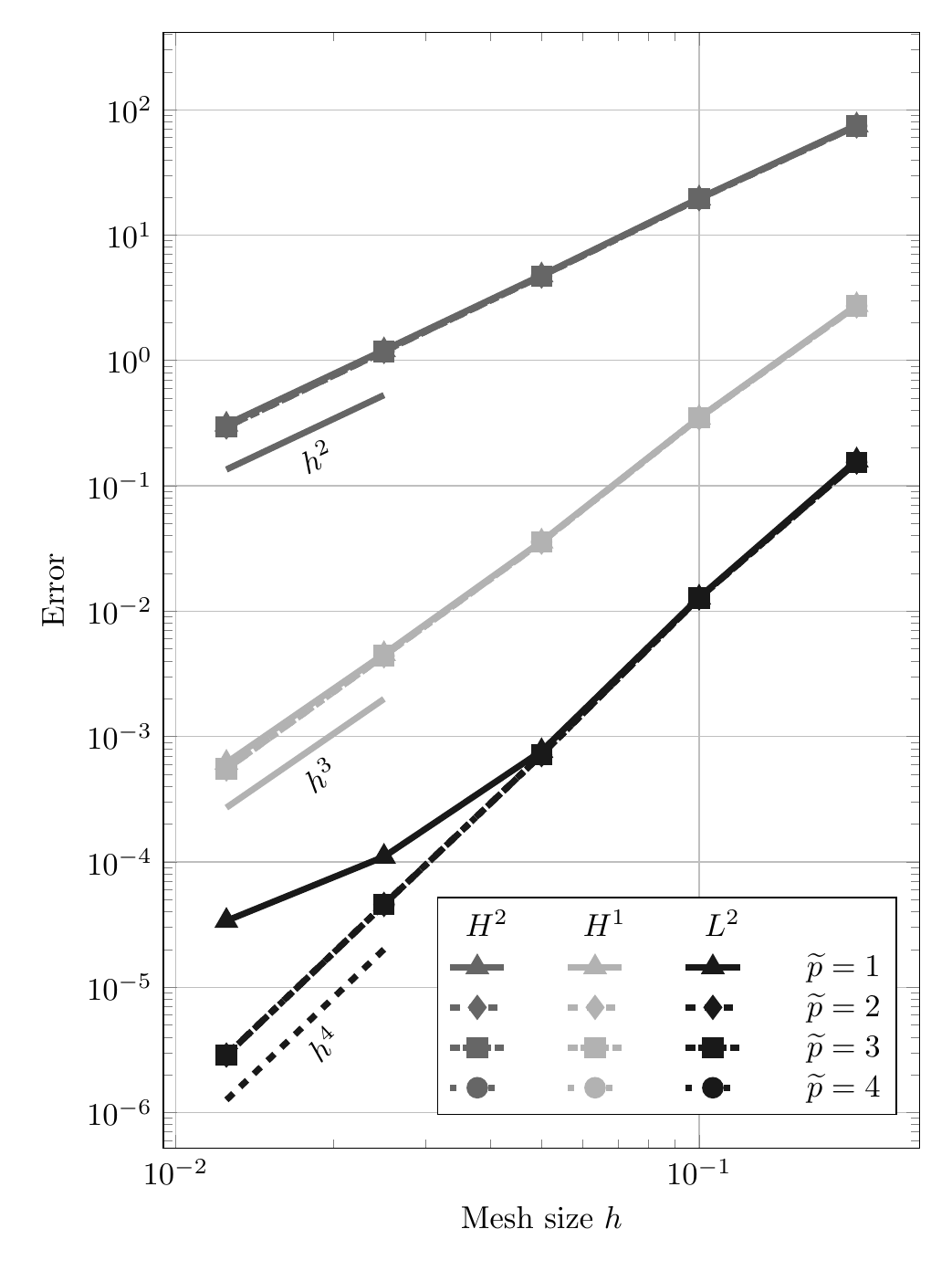}
\subcaption{Ex.\ II with $p = 3$, $r = 1$.} \label{fig::plot_gluingData_error}
  \end{subfigure}
      \begin{subfigure}[c]{0.24\textwidth}
  \centering
  \includegraphics[width=\textwidth]{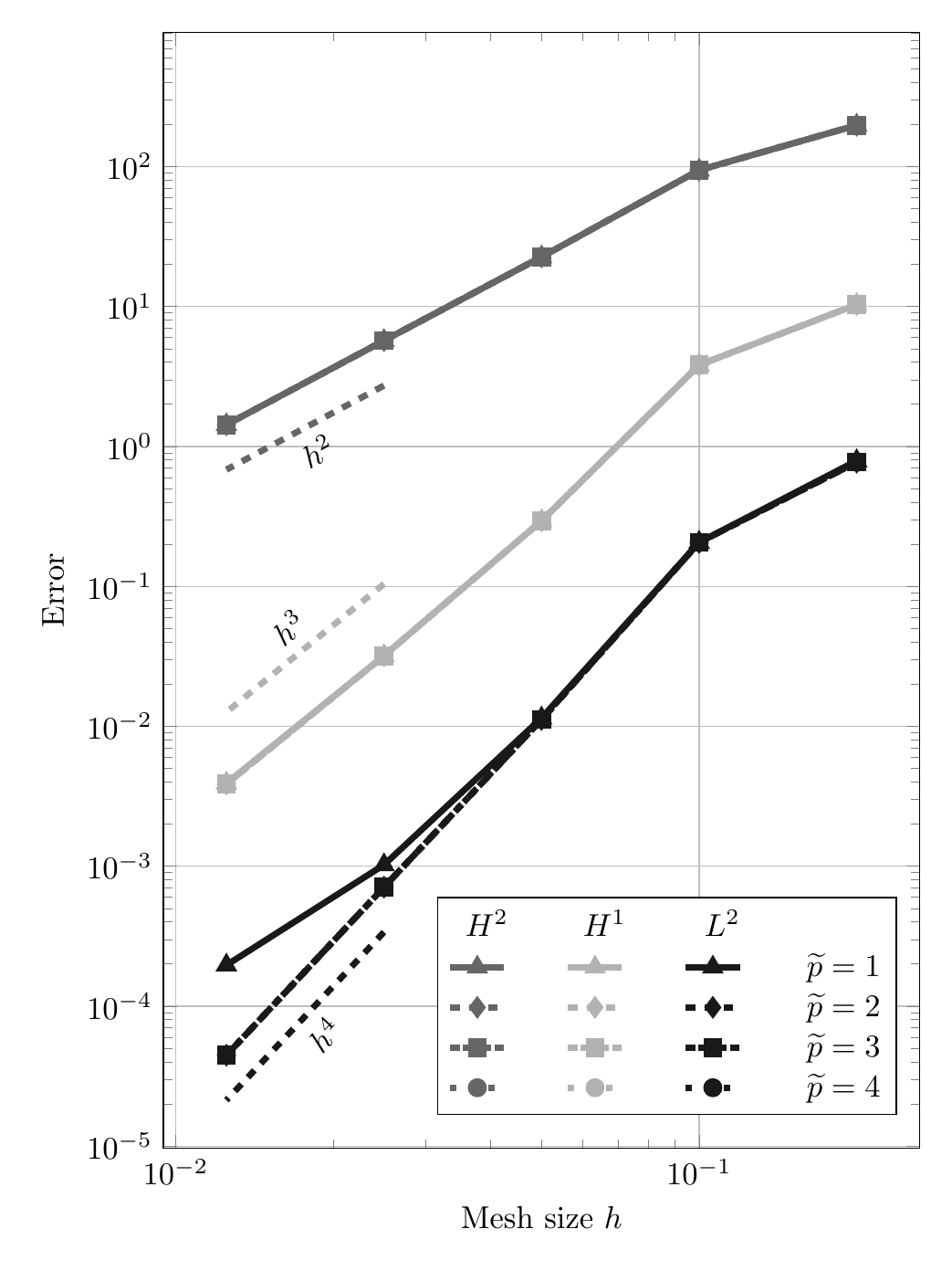}
  \subcaption{Ex.\ III with $p = 3$, $r = 1$.} \label{fig::plot_gluingData_error_geo8}
  \end{subfigure}
    \begin{subfigure}[c]{0.24\textwidth}
  \centering
  \includegraphics[width=\textwidth]{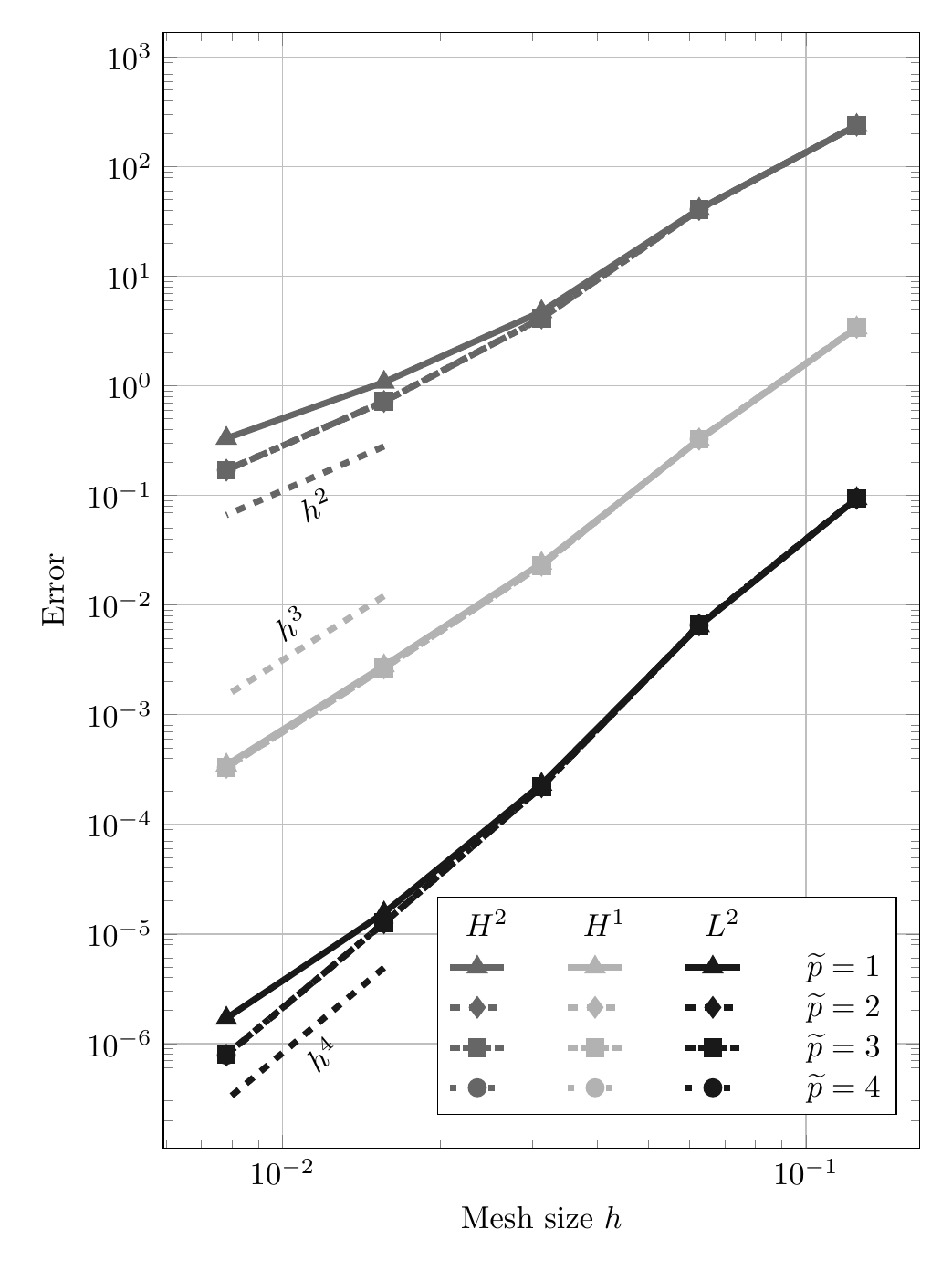}
  \subcaption{Ex.\ IV with $p = 3$, $r = 1$.} \label{fig::plot_gluingData_error_geo41}
  \end{subfigure}

  \begin{subfigure}[b]{0.24\textwidth}
  \centering
  \includegraphics[width=\textwidth]{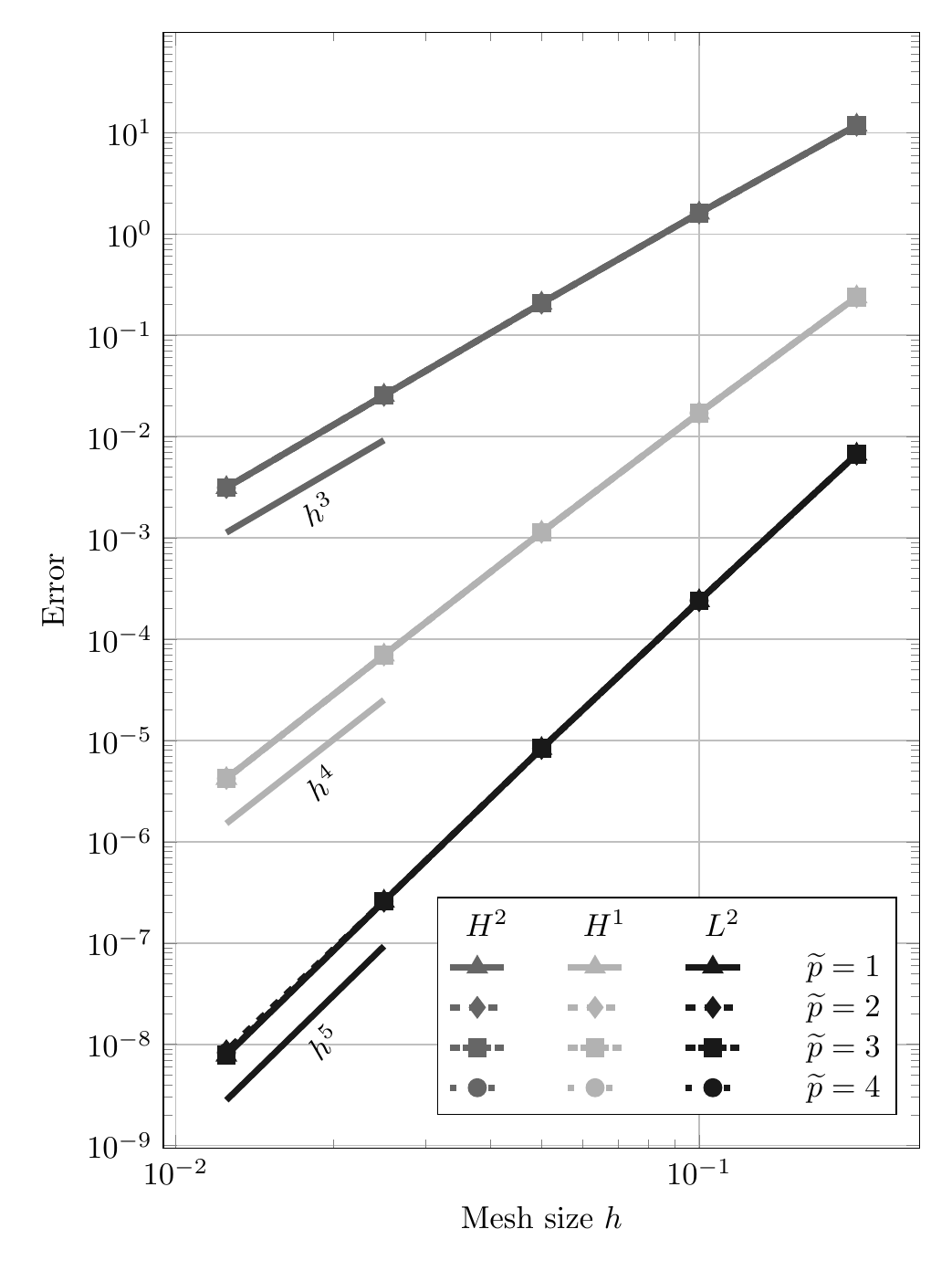}
  \subcaption{Ex.\ I with $p = 4$, $r = 1$.} \label{fig::plot_gluingData_error_p4_AS}
  \end{subfigure}
  \begin{subfigure}[b]{0.24\textwidth}
  \centering
  \includegraphics[width=\textwidth]{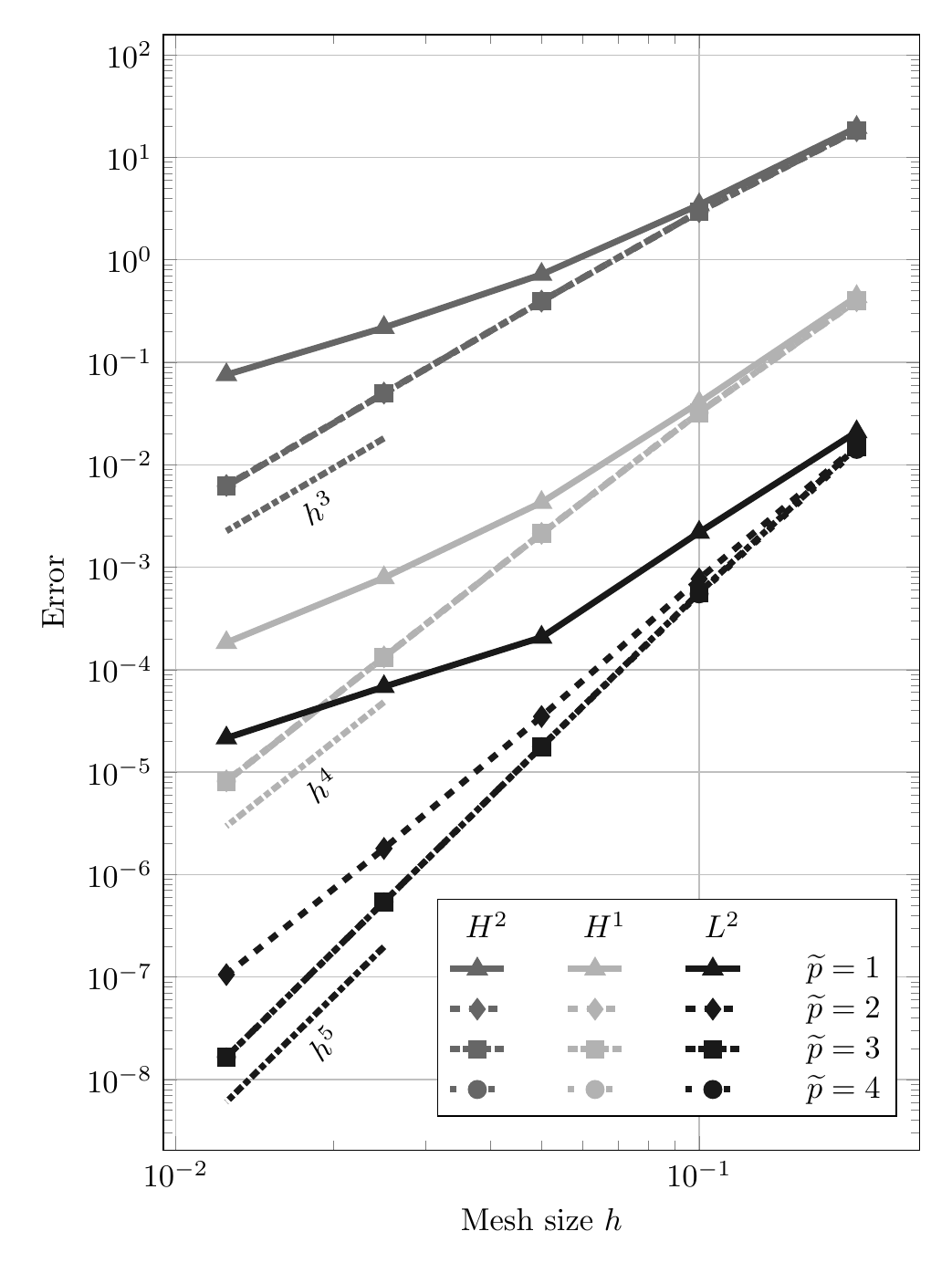}
\subcaption{Ex.\ II with $p = 4$, $r = 1$.} \label{fig::plot_gluingData_error_p4}
  \end{subfigure}
      \begin{subfigure}[b]{0.24\textwidth}
  \centering
  \includegraphics[width=\textwidth]{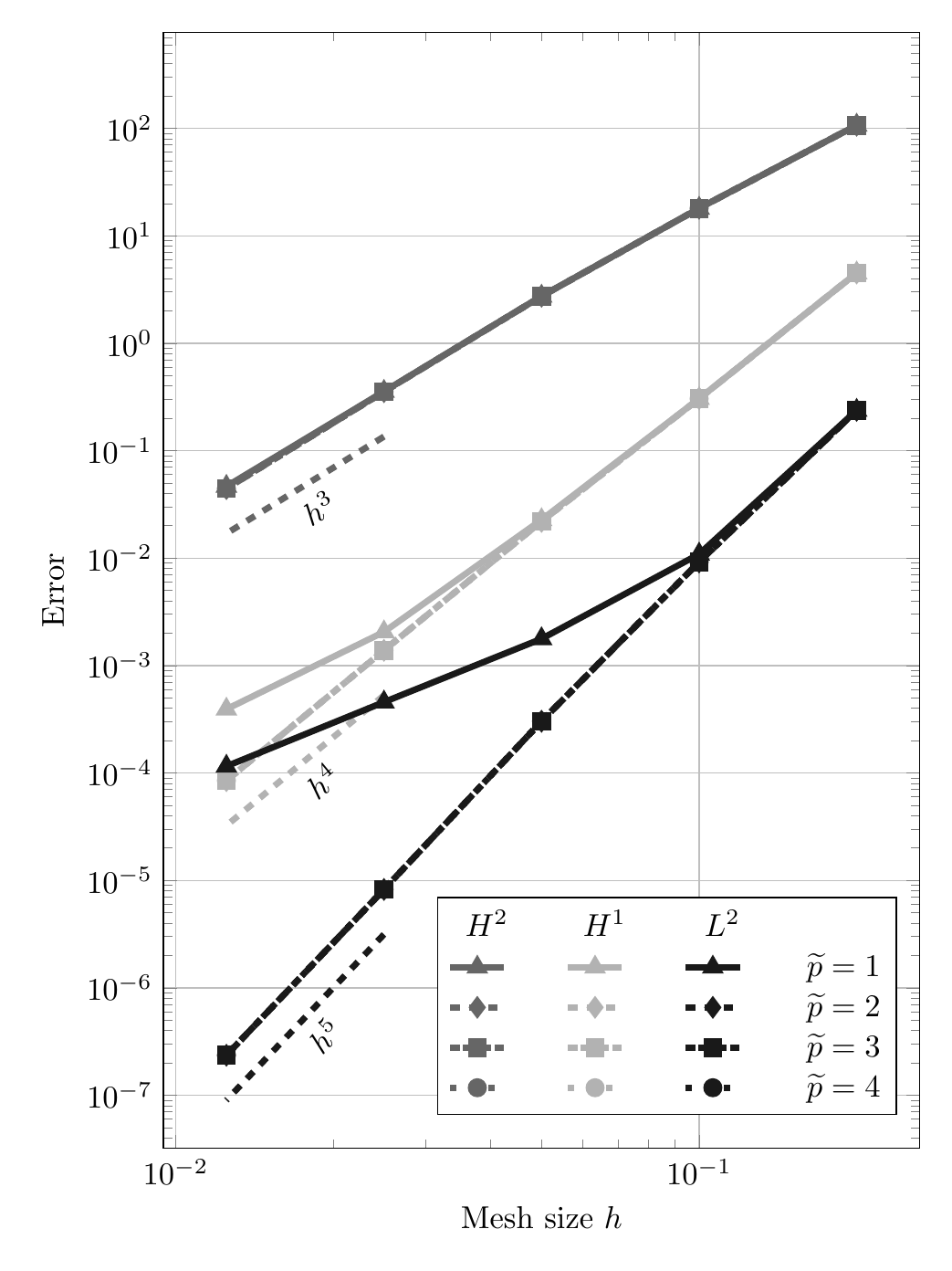}
  \subcaption{Ex.\ III with $p = 4$, $r = 1$.} \label{fig::plot_gluingData_error_p4_geo8}
  \end{subfigure}
    \begin{subfigure}[b]{0.24\textwidth}
  \centering
  \includegraphics[width=\textwidth]{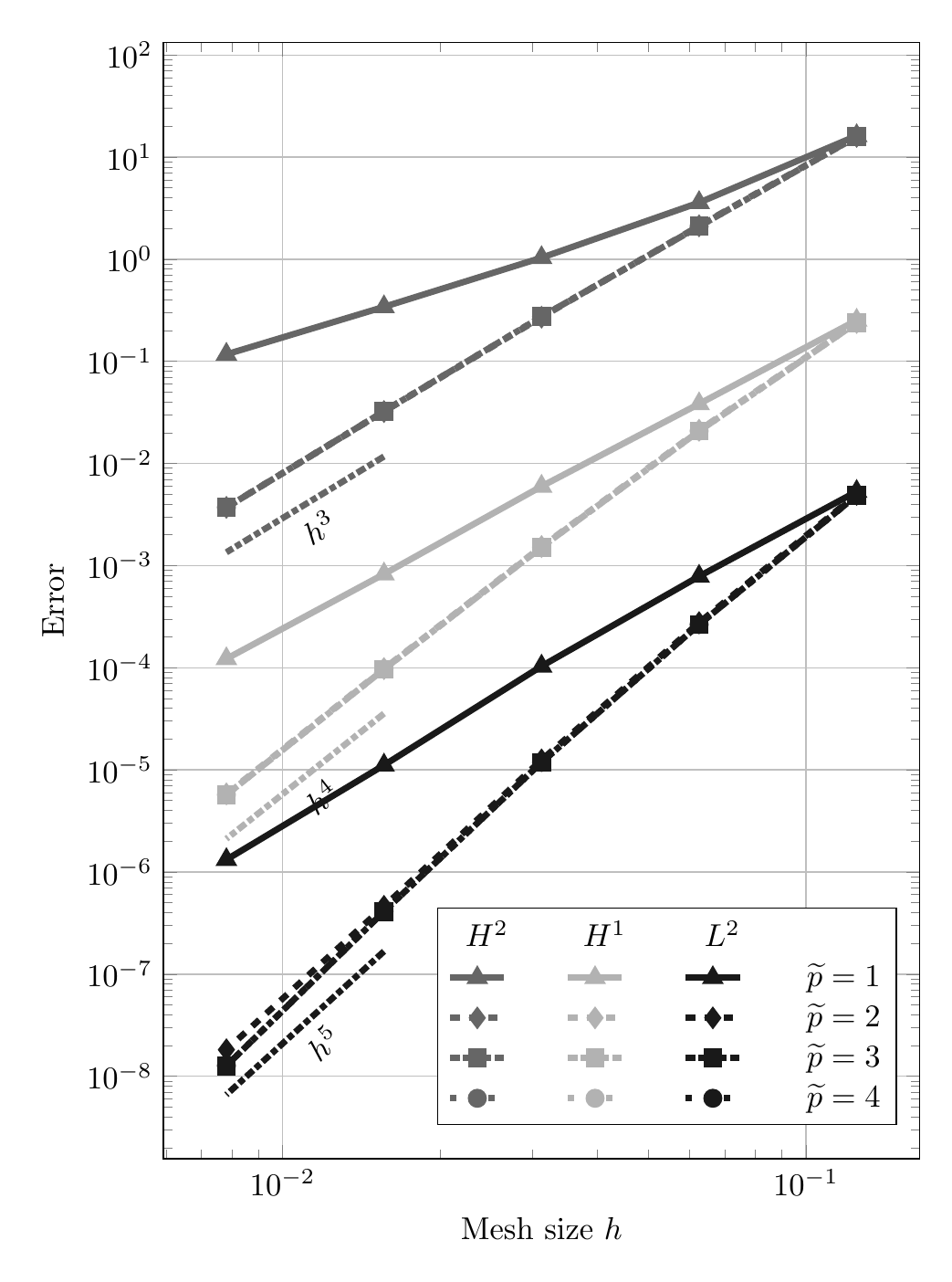}
  \subcaption{Ex.\ IV with $p = 4$, $r = 1$.} \label{fig::plot_gluingData_error_p4_geo41}
  \end{subfigure}

  \begin{subfigure}[b]{0.24\textwidth}
  \centering
  \includegraphics[width=\textwidth]{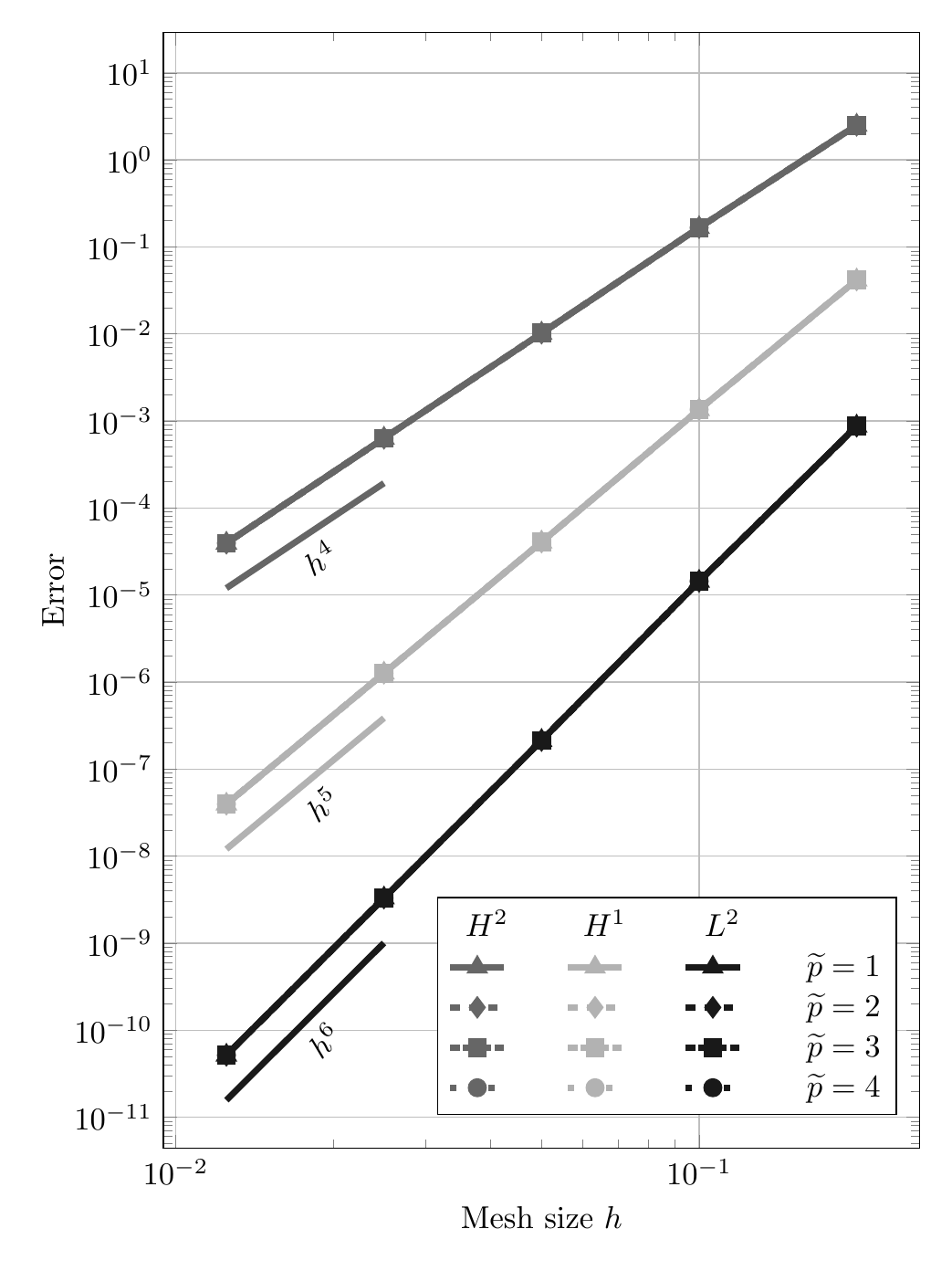}
  \subcaption{Ex.\ I with $p = 5$, $r = 1$.} \label{fig::plot_gluingData_error_p5_AS}
  \end{subfigure}
  \begin{subfigure}[b]{0.24\textwidth}
  \centering
  \includegraphics[width=\textwidth]{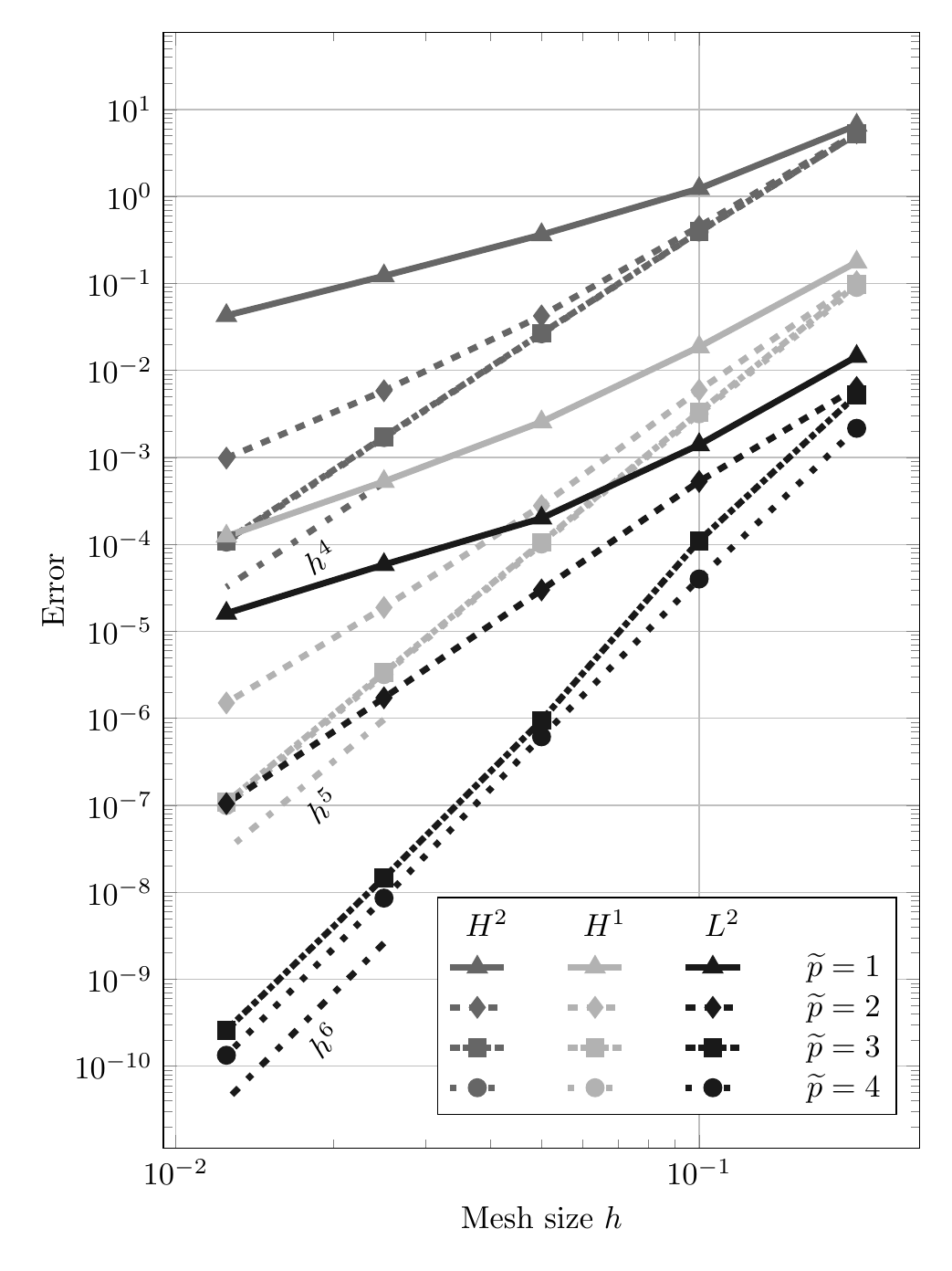}
\subcaption{Ex.\ II with $p = 5$, $r = 1$.} \label{fig::plot_gluingData_error_p5}
  \end{subfigure}
      \begin{subfigure}[b]{0.24\textwidth}
  \centering
  \includegraphics[width=\textwidth]{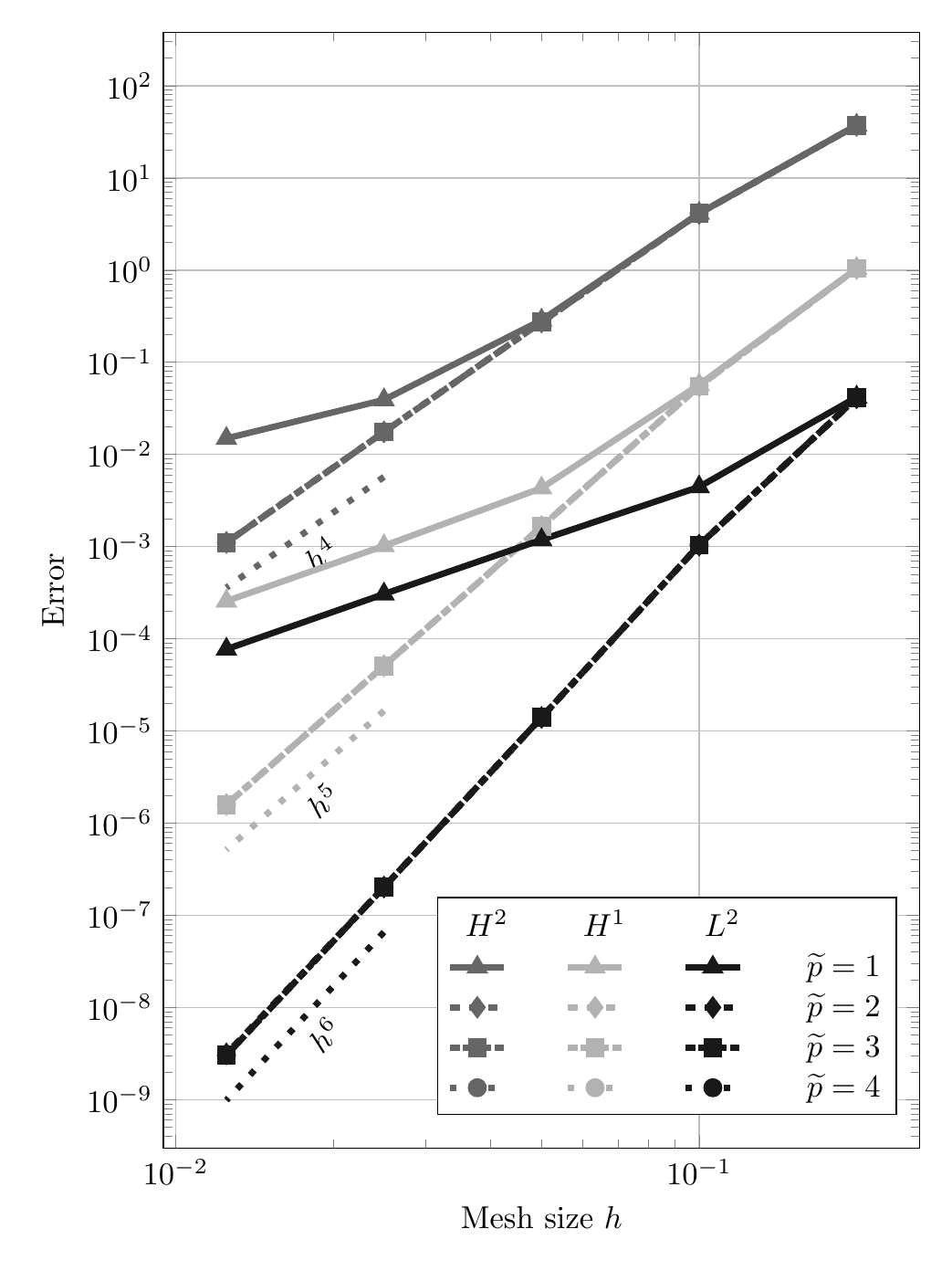}
  \subcaption{Ex.\ III with $p = 5$, $r = 1$.} \label{fig::plot_gluingData_error_p5_geo8}
  \end{subfigure}
    \begin{subfigure}[b]{0.24\textwidth}
  \centering
  \includegraphics[width=\textwidth]{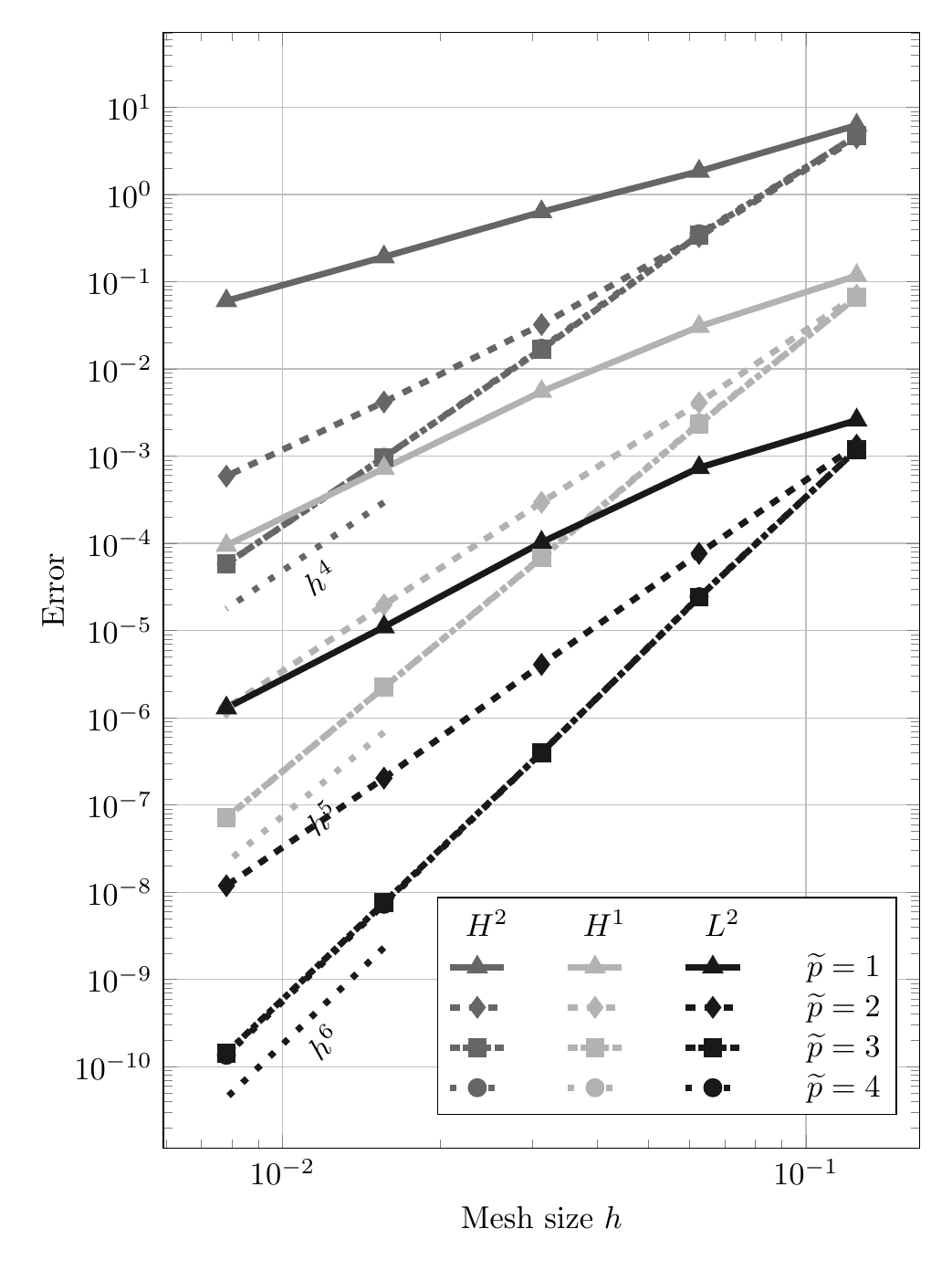}
  \subcaption{Ex.\ IV with $p = 5$, $r = 1$.} \label{fig::plot_gluingData_error_p5_geo41}
  \end{subfigure}

  \caption{Convergence rates for different polynomial degrees $p$ and varying approximations of the gluing data.} \label{fig::ex1error}
\end{figure}

\subsection{Different regularity $r$} \label{sec::regularity}

In the following we focus on different regularities and therefore on different numbers of degrees of freedom. For each polynomial degree $p \in \{ 3,4,5\}$ the gluing data is approximated by splines with $\widetilde{p} = \max(p-2,2)$ and $\widetilde{r} = \widetilde{p}-1$. The results are shown in Figure~\ref{fig::plot_regularity}. One can see that in all examples the errors for $r \leq p-2$ are quite similar with respect to the number of degrees of freedom, whereas the errors with $r = p-1$ are almost the same with less than half of the degrees of freedom. This shows that using splines of maximum regularity, a comparable error can be achieved with significantly fewer degrees of freedom. Hence, the underlying linear system is much smaller and the computation time can be reduced significantly. Note that for the  standard AS-$G^1$ basis construction as developed in~\cite{kapl2017dimension} the regularity is bounded by $r\leq p-2$ globally to obtain the nested, isogeometric spaces ${\mathcal{V}}^1_{h}$. For the construction we propose here, reduced regularity is only needed for the interface space $\widetilde{\mathcal{A}}_{\Gamma}$, whereas the patch-interior spaces $\mathcal{A}^{(S)}_{\circ}$, for $S\in\{L,R\}$, can be constructed with $r=p-1$. As a consequence the approximately $C^1$-smooth spaces $\widetilde{\mathcal{V}}^1_{h}$ are not nested, even though the underlying $C^0$-smooth spaces ${\mathcal{V}}^0_{h}$ are.

\begin{figure}[hp!]  
  \begin{subfigure}[b]{0.24\textwidth}
  \centering
    \includegraphics[width=\textwidth]{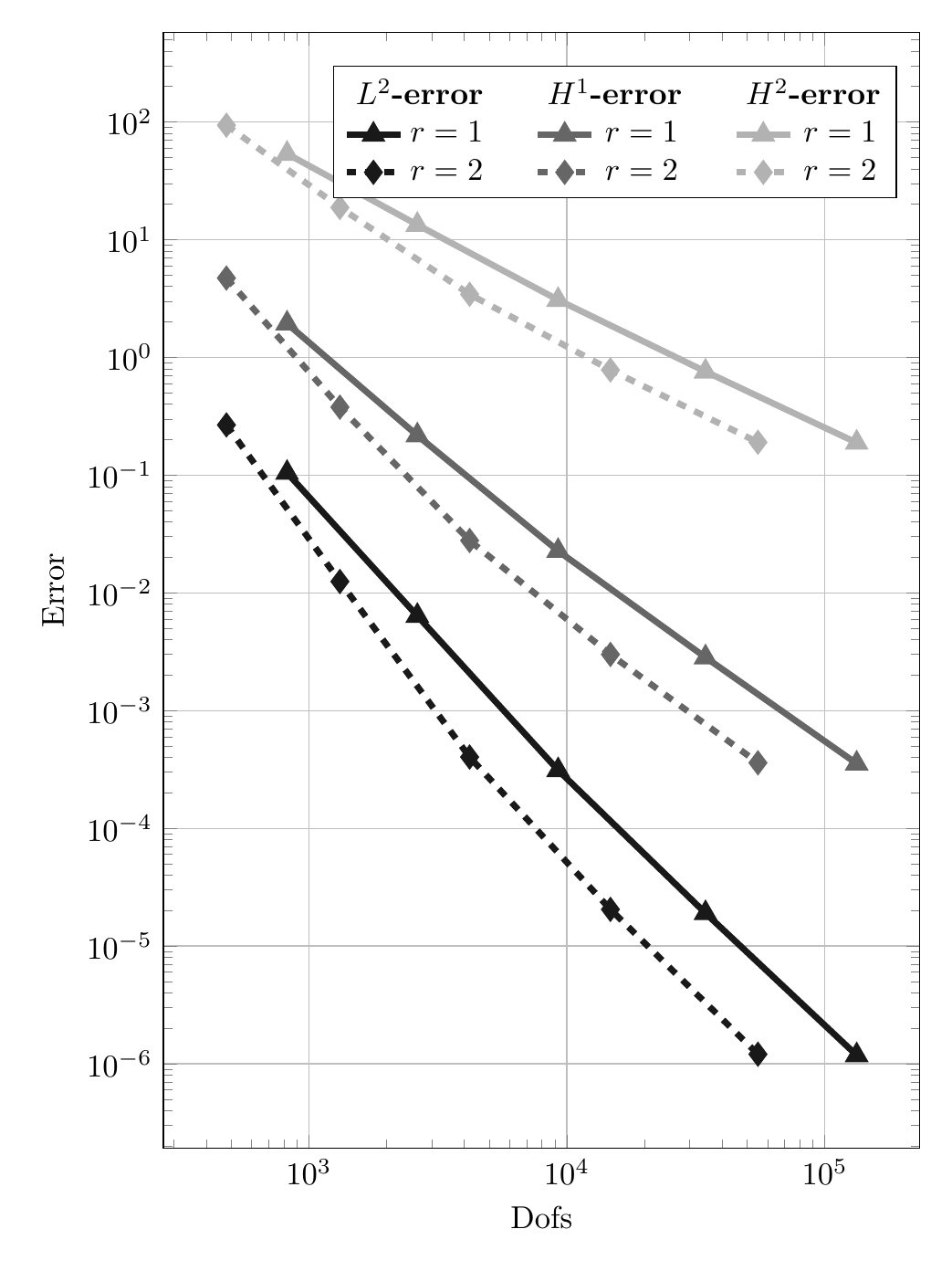}
  \subcaption{Ex.\ I: $p=3$, $(\widetilde{p},\widetilde{r}) = (2,1)$.} \label{fig::plot_regularity_error1_AS}
  \end{subfigure}
  \begin{subfigure}[b]{0.24\textwidth}
  \centering
    \includegraphics[width=\textwidth]{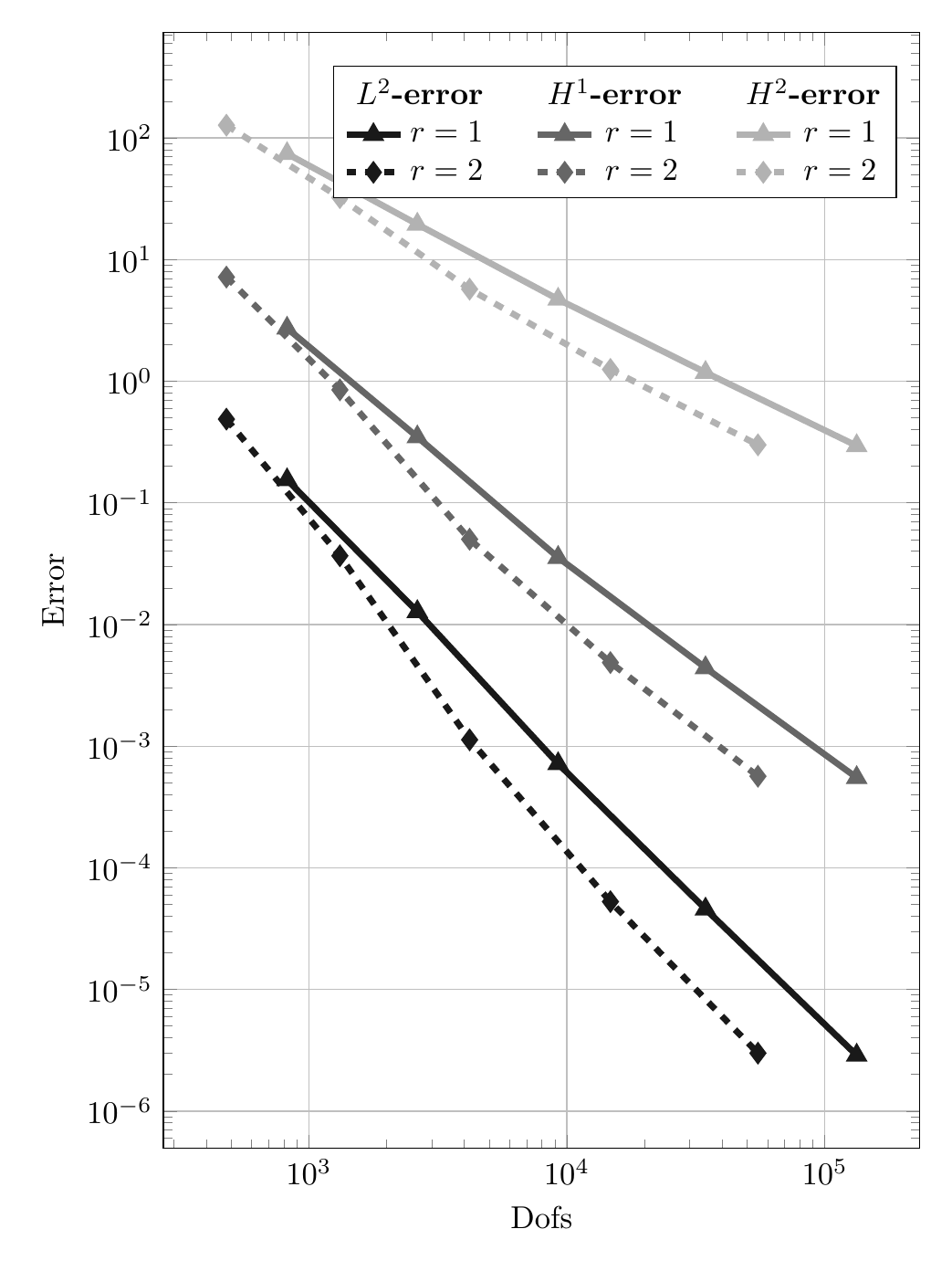}
  \subcaption{Ex.\ II: $p=3$, $(\widetilde{p},\widetilde{r}) = (2,1)$.} \label{fig::plot_regularity_error1}
  \end{subfigure}
      \begin{subfigure}[b]{0.24\textwidth}
  \centering
    \includegraphics[width=\textwidth]{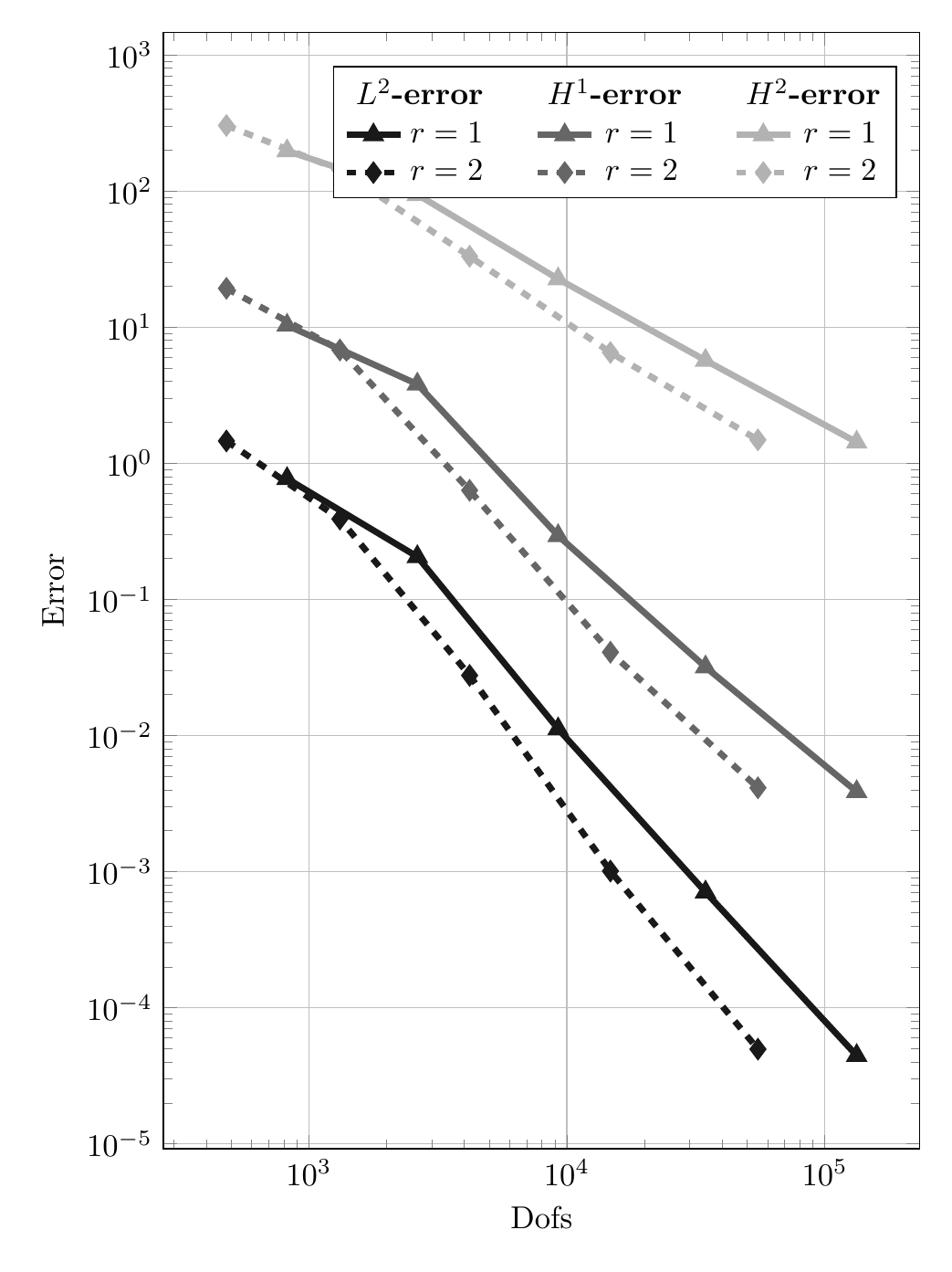}
  \subcaption{Ex.\ III: $p=3$, $(\widetilde{p},\widetilde{r}) = (2,1)$.} \label{fig::plot_regularity_error1_geo8}
  \end{subfigure}
    \begin{subfigure}[b]{0.24\textwidth}
  \centering
    \includegraphics[width=\textwidth]{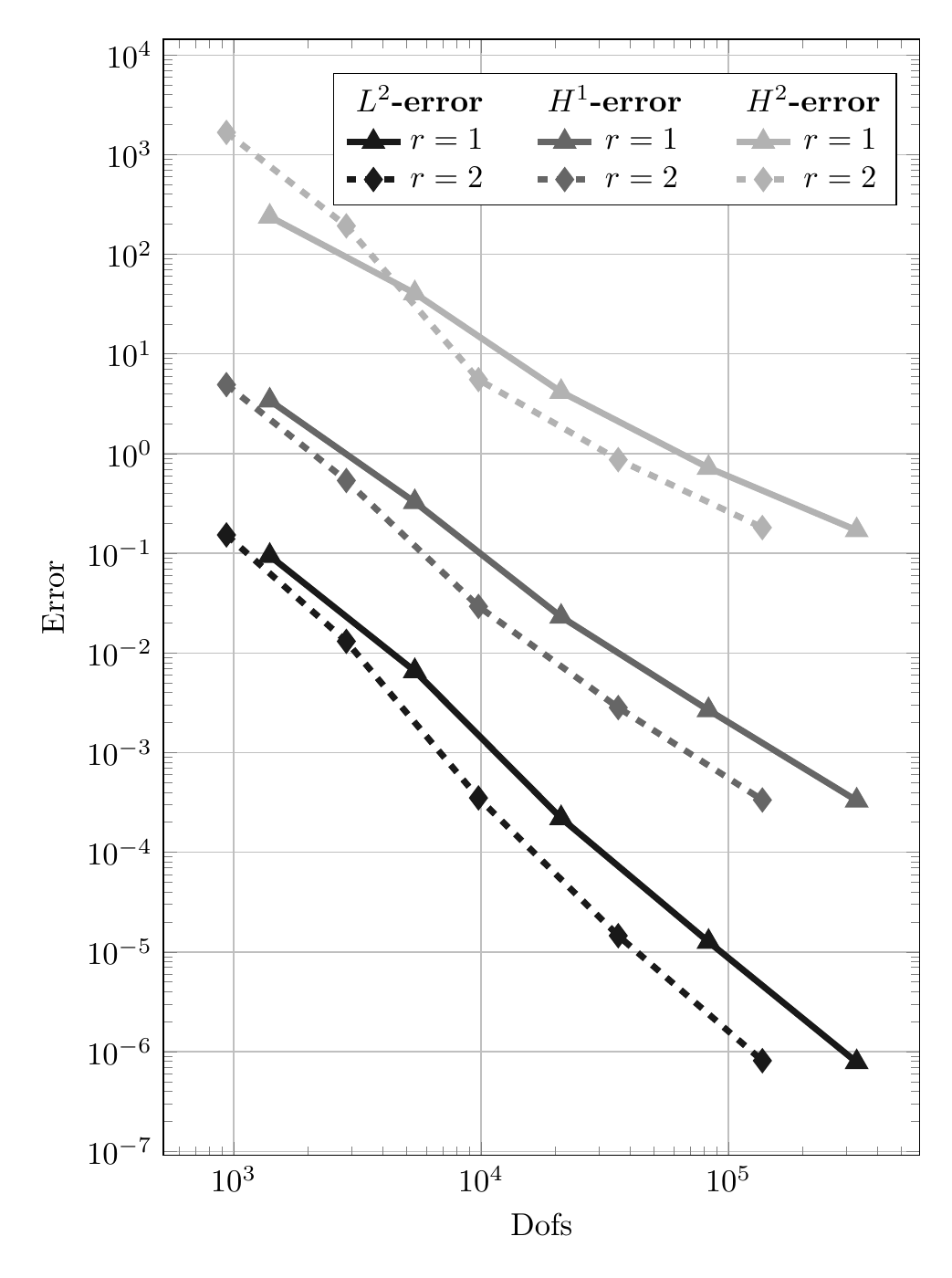}
  \subcaption{Ex.\ IV: $p=3$, $(\widetilde{p},\widetilde{r}) = (2,1)$.} \label{fig::plot_regularity_error1_geo41}
  \end{subfigure}

  \begin{subfigure}[b]{0.24\textwidth}
  \centering
    \includegraphics[width=\textwidth]{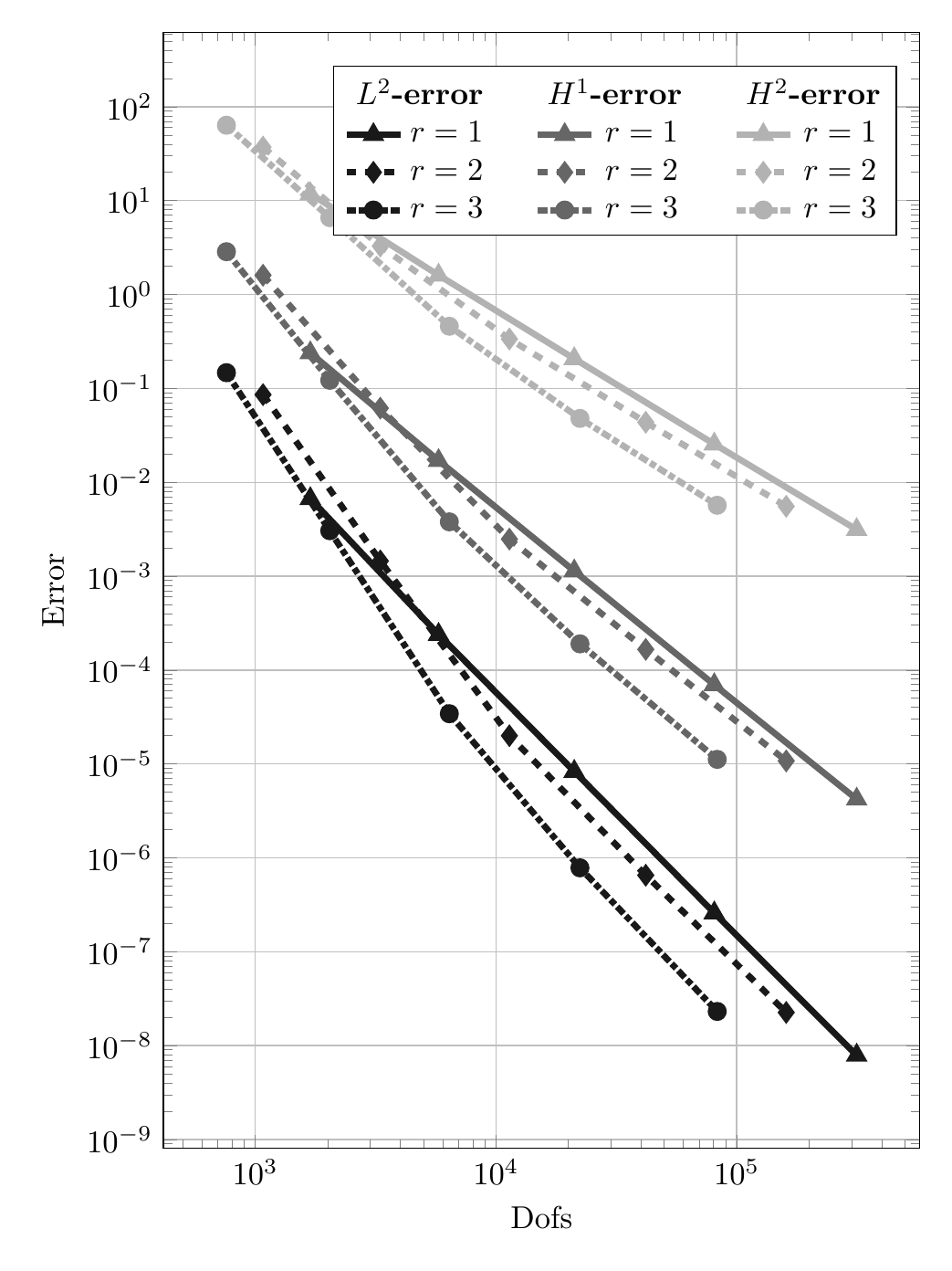}
  \subcaption{Ex.\ I: $p=4$, $(\widetilde{p},\widetilde{r}) = (2,1)$.} \label{fig::plot_regularity_error2_AS}
  \end{subfigure}
  \begin{subfigure}[b]{0.24\textwidth}
  \centering
    \includegraphics[width=\textwidth]{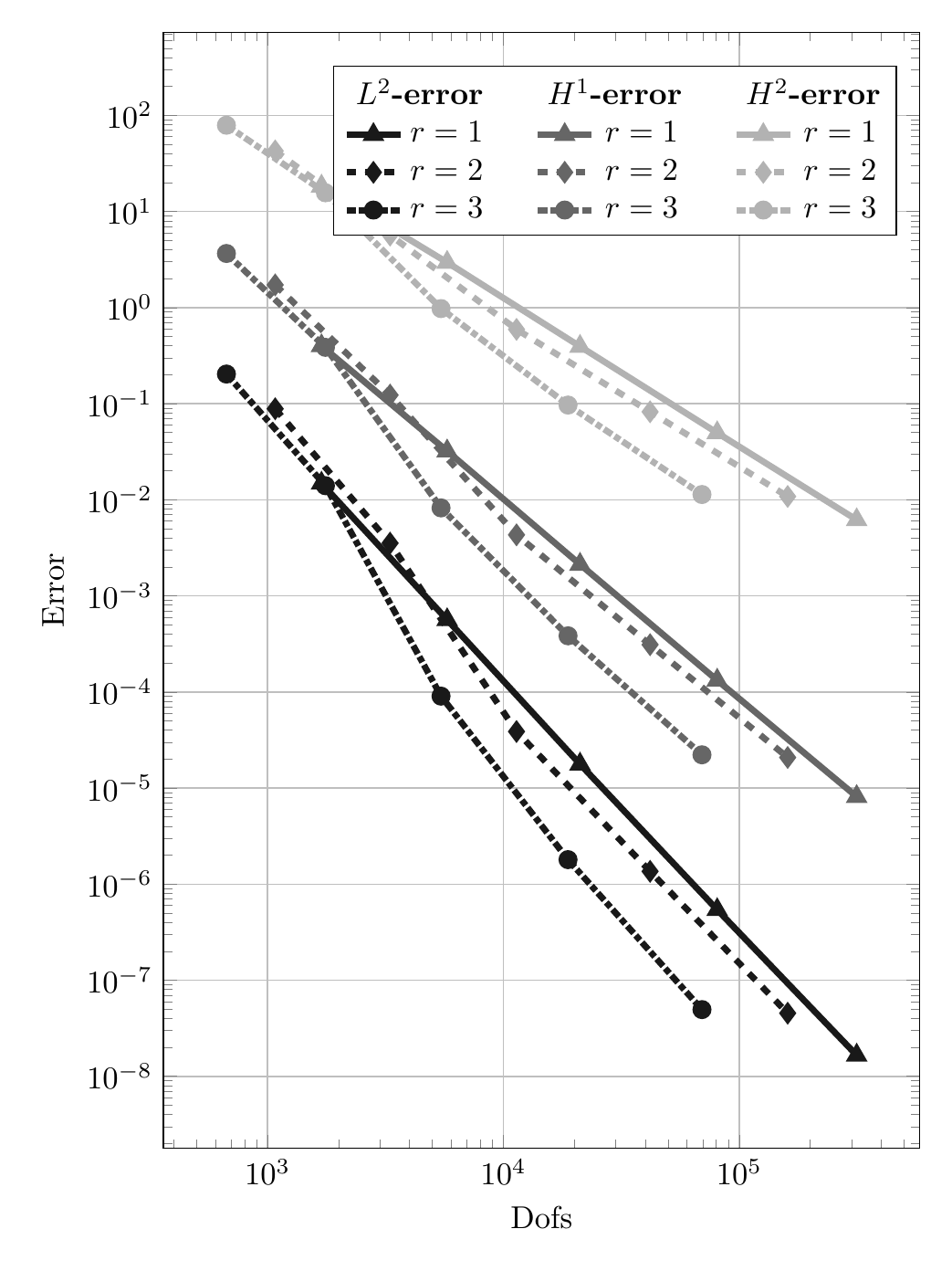}
  \subcaption{Ex.\ II: $p=4$, $(\widetilde{p},\widetilde{r}) = (2,1)$.} \label{fig::plot_regularity_error2}
  \end{subfigure}
      \begin{subfigure}[b]{0.24\textwidth}
  \centering
    \includegraphics[width=\textwidth]{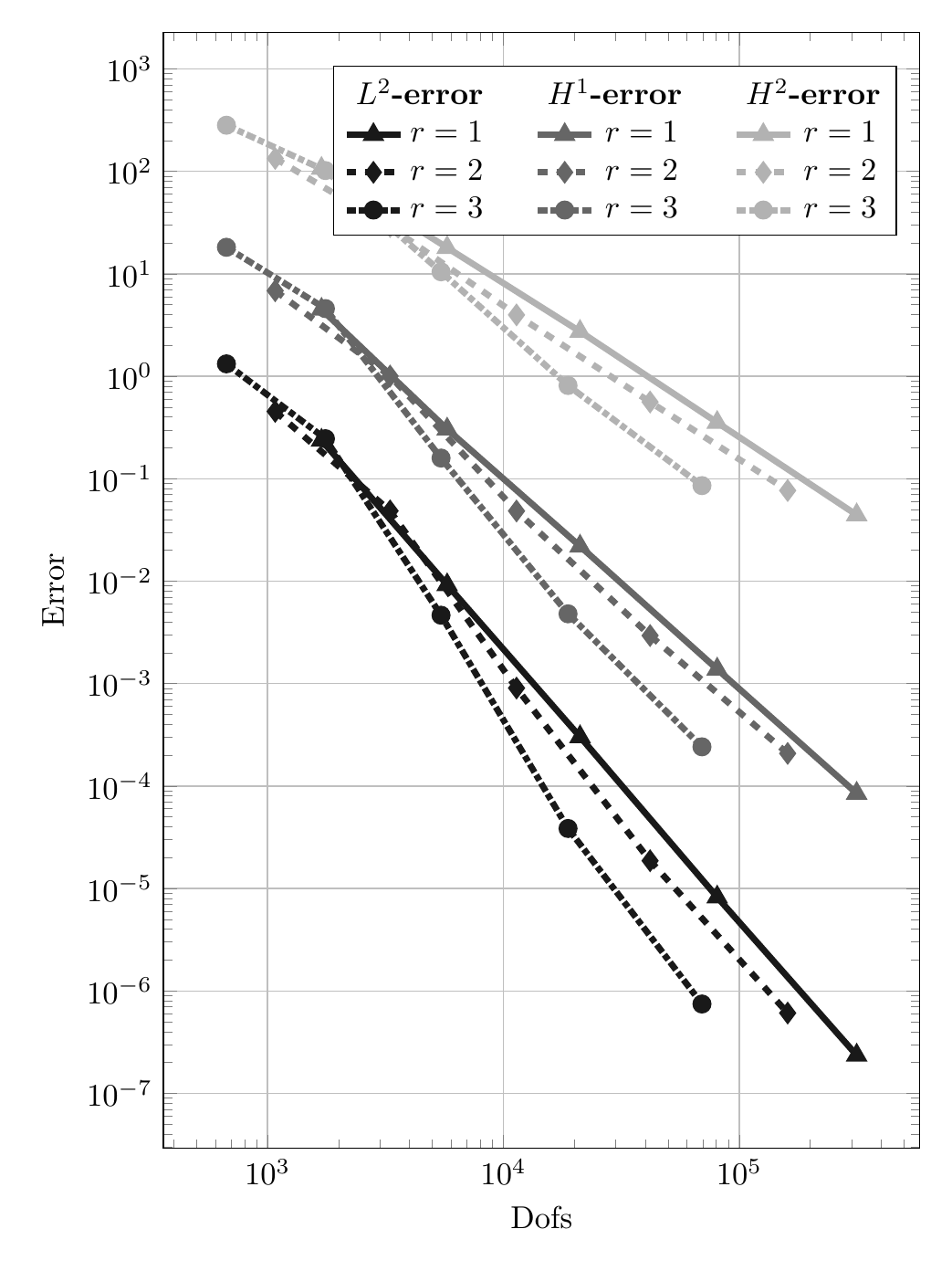}
  \subcaption{Ex.\ III: $p=4$, $(\widetilde{p},\widetilde{r}) = (2,1)$.} \label{fig::plot_regularity_error2_geo8}
  \end{subfigure}
    \begin{subfigure}[b]{0.24\textwidth}
  \centering
    \includegraphics[width=\textwidth]{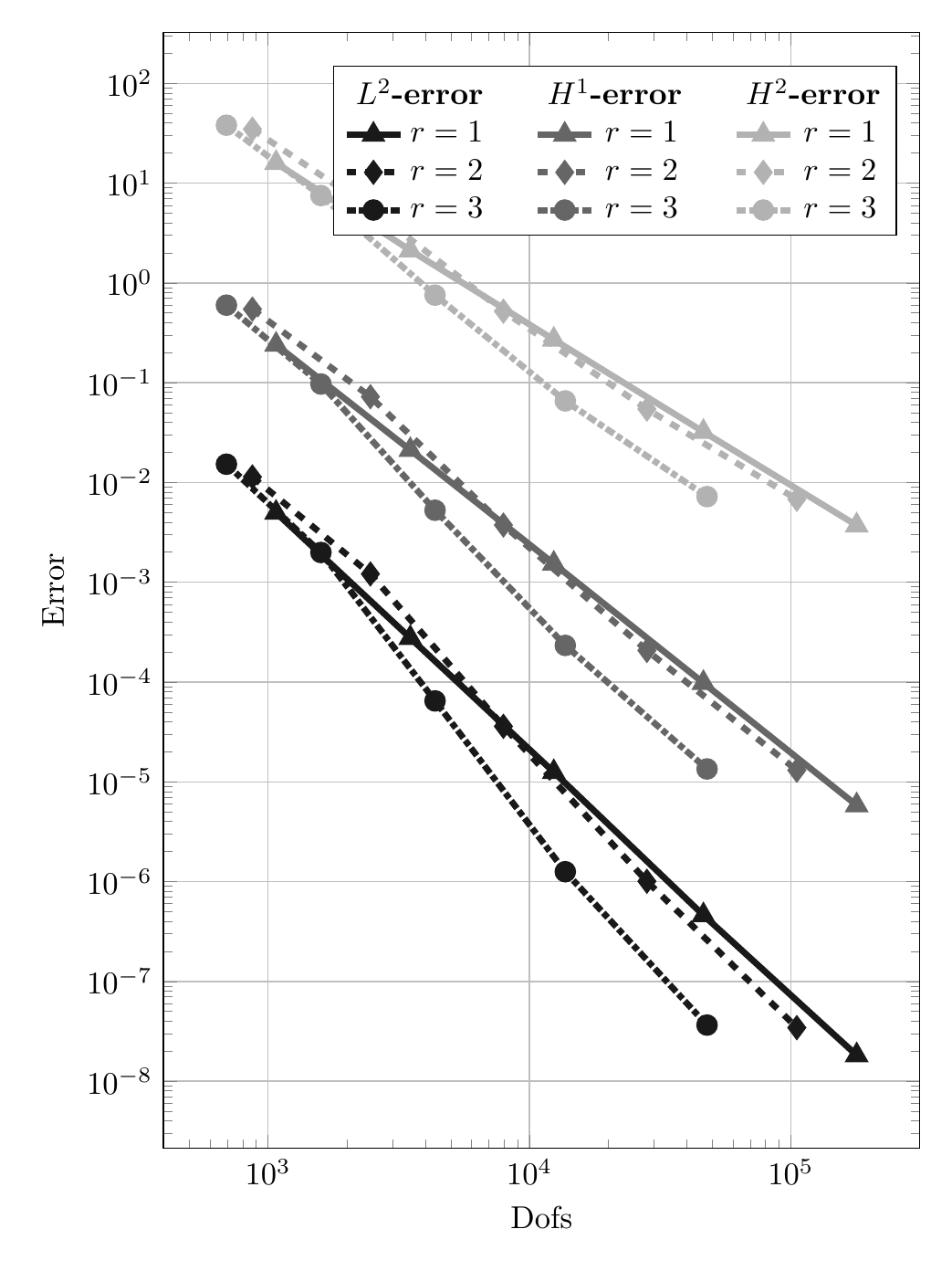}
  \subcaption{Ex.\ IV: $p=4$, $(\widetilde{p},\widetilde{r}) = (2,1)$.} \label{fig::plot_regularity_error2_geo41}
  \end{subfigure}

    \begin{subfigure}[b]{0.24\textwidth}
  \centering
    \includegraphics[width=\textwidth]{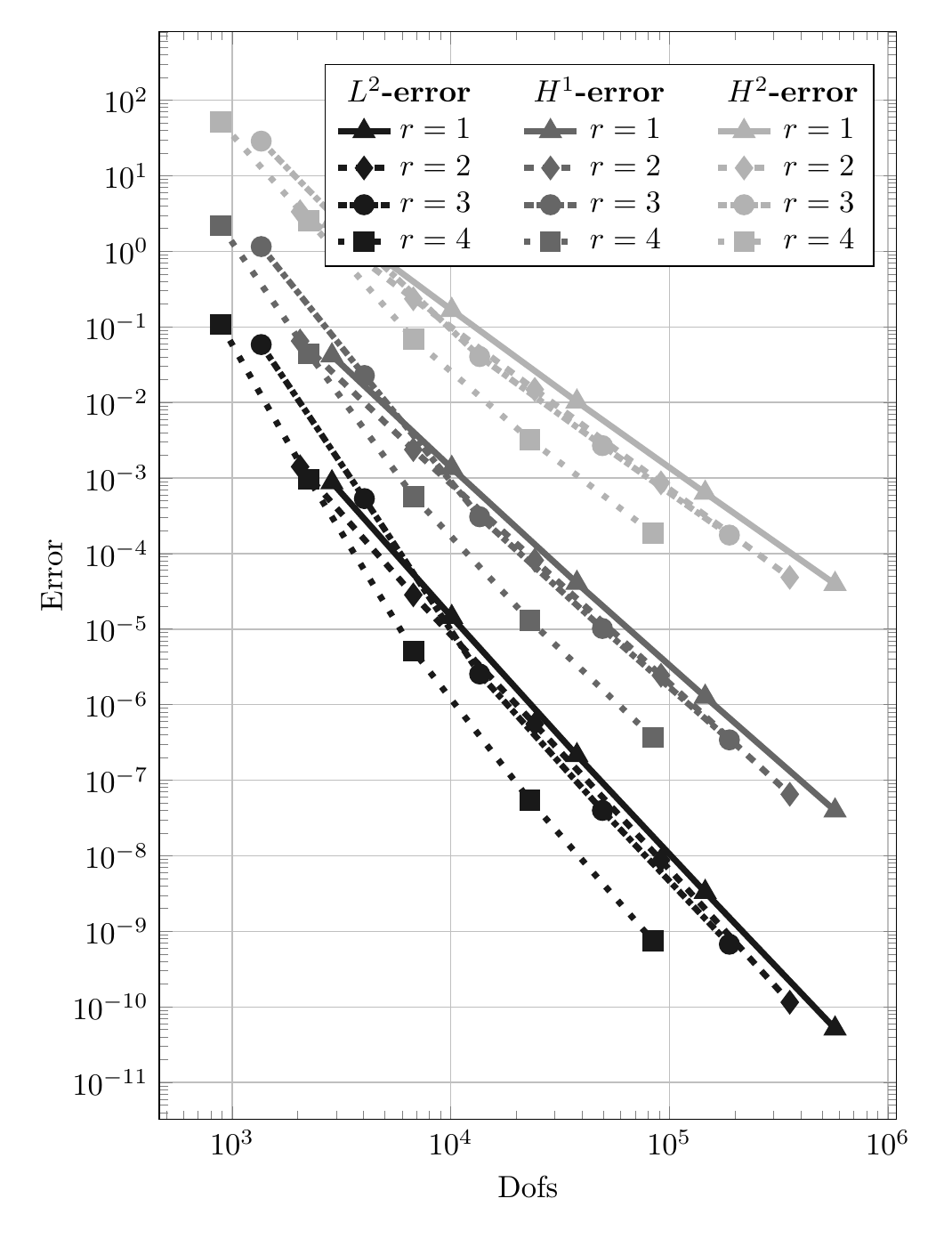}
  \subcaption{Ex.\ I: $p=5$, $(\widetilde{p},\widetilde{r}) = (3,2)$.} \label{fig::plot_regularity_error3_AS}
  \end{subfigure}
  \begin{subfigure}[b]{0.24\textwidth}
  \centering
    \includegraphics[width=\textwidth]{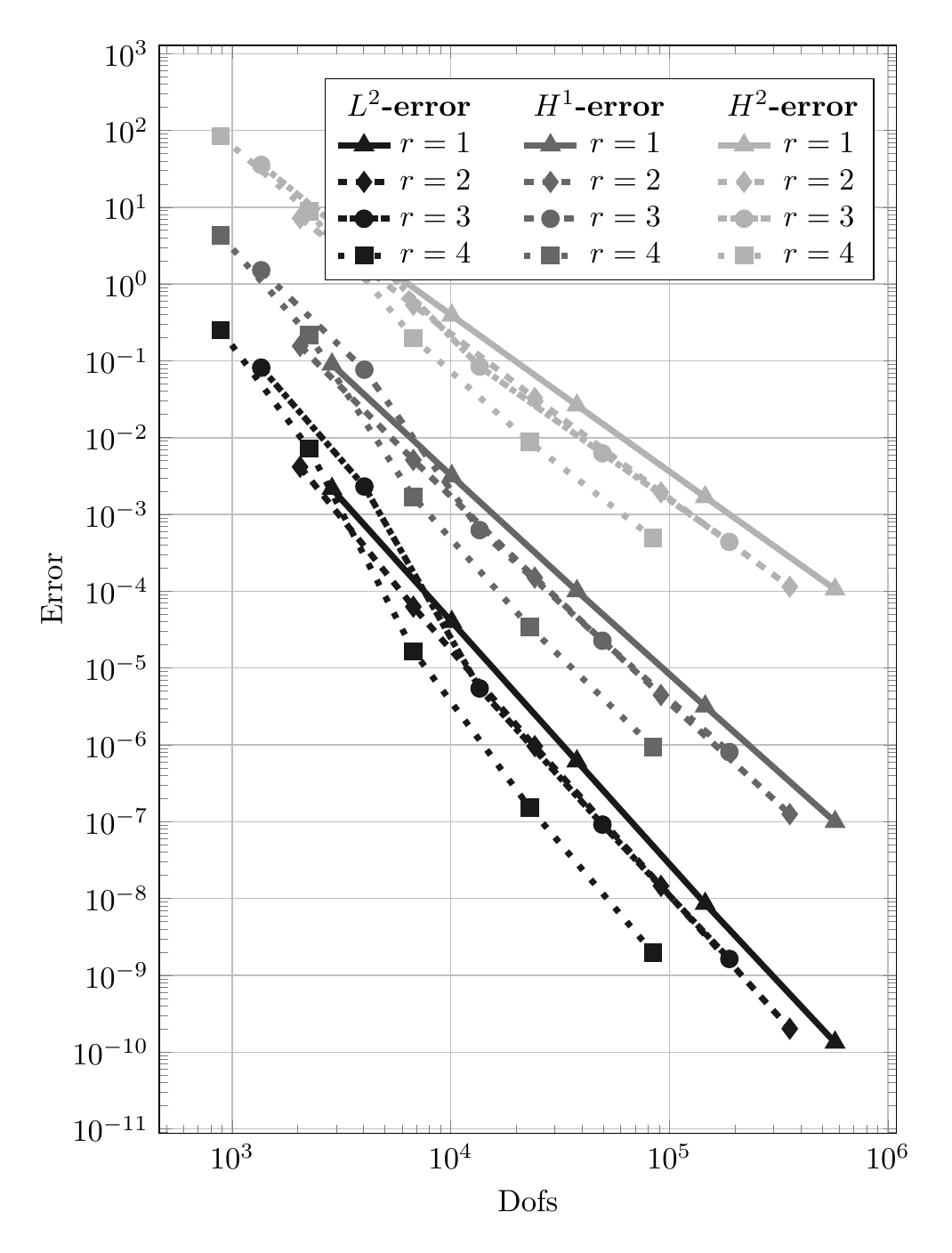}
  \subcaption{Ex.\ II: $p=5$, $(\widetilde{p},\widetilde{r}) = (3,2)$.} \label{fig::plot_regularity_error3}
  \end{subfigure}
      \begin{subfigure}[b]{0.24\textwidth}
  \centering
    \includegraphics[width=\textwidth]{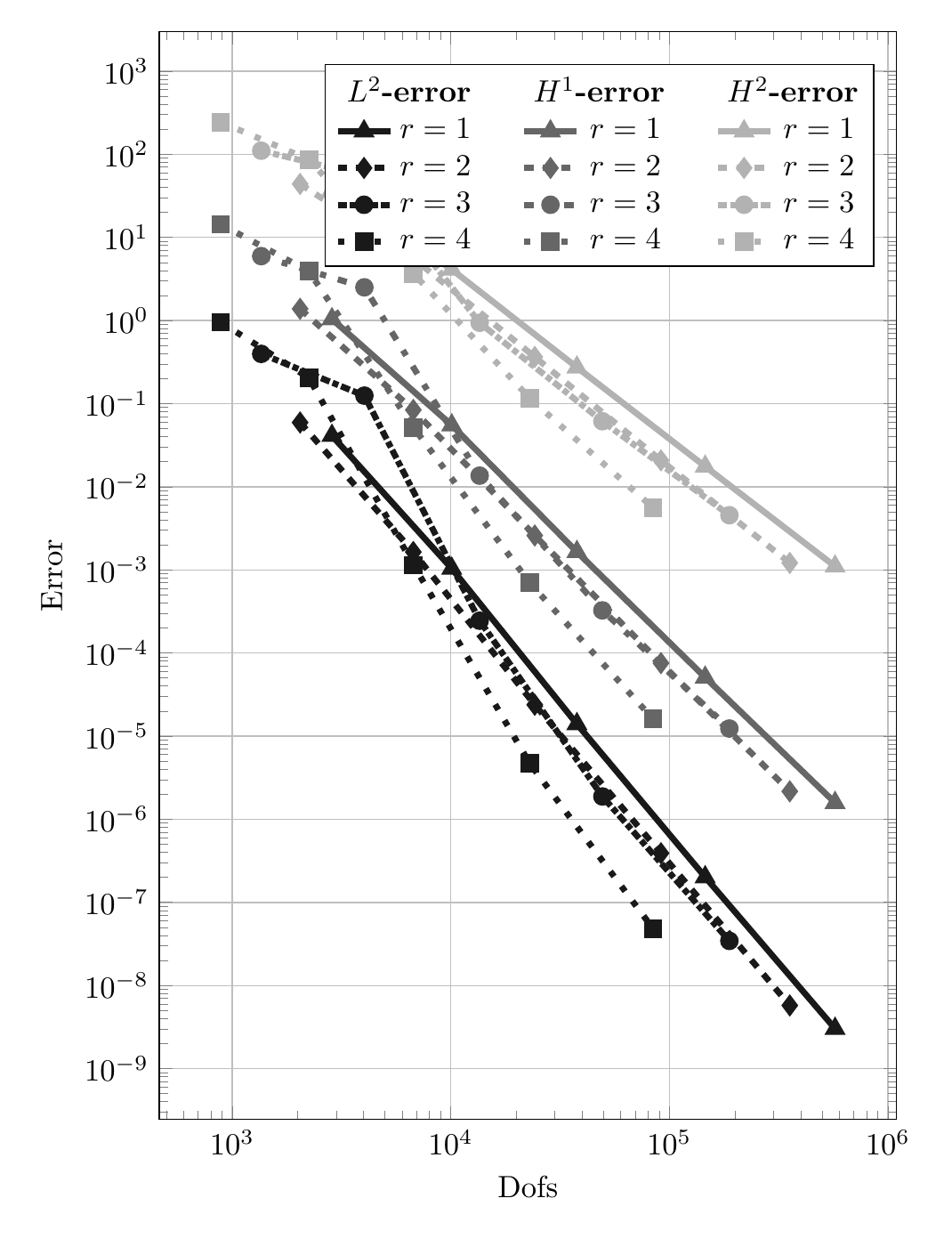}
  \subcaption{Ex.\ III: $p=5$, $(\widetilde{p},\widetilde{r}) = (3,2)$.} \label{fig::plot_regularity_error3_geo8}
  \end{subfigure}
    \begin{subfigure}[b]{0.24\textwidth}
  \centering
    \includegraphics[width=\textwidth]{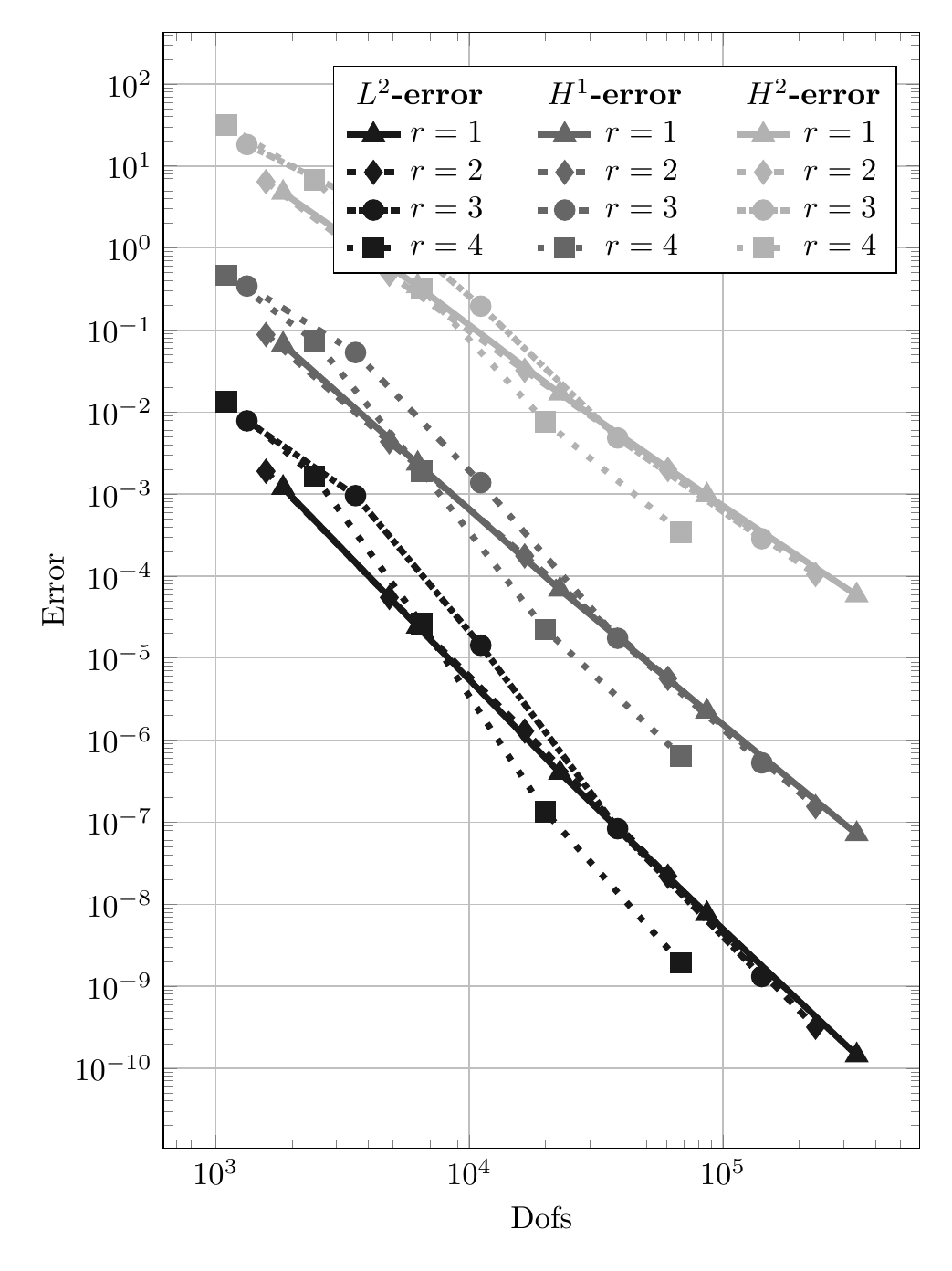}
  \subcaption{Ex.\ IV: $p=5$, $(\widetilde{p},\widetilde{r}) = (3,2)$.} \label{fig::plot_regularity_error3_geo41}
  \end{subfigure}
   
  \caption{Convergence rates for different regularities: showing error vs. number of degrees of freedom. Note that only plots where $r \leq p - 1$ are shown.} \label{fig::plot_regularity}
\end{figure}

\section{Conclusion and future work}

In this paper, we constructed and studied approximately $C^1$-smooth spaces $\widetilde{\mathcal{V}}^1_h$ over general two-patch domains which can be used for solving fourth order problems. Following the approach in~\cite{kapl2017dimension}, the basis construction for the space is simple and combines standard patch-wise basis functions with specific interface functions. The construction in~\cite{kapl2017dimension}, which is based on~\cite{KaViJu15,collin2016analysis}, describes a basis for analysis-suitable $G^1$ parametrizations. In that case, the parametrizations need to satisfy the AS-$G^1$ condition which requires the gluing data to be linear functions. This is a severe restriction, as most (generic) spline parametrizations are not AS-$G^1$. Thus, in general, a reparametrization as developed in~\cite{kapl2017isogeometric} is necessary.

Our approach relaxes this AS-$G^1$ condition on the geometry and is applicable on most two-patch domains. The only requirement is, that the gluing data is $C^1$, which is always satisfied if the patch parametrizations are at least $C^2$. Generic spline parametrizations yield gluing data which is either piecewise polynomial of high degree or rational. Instead of reparametrizing the domain to obtain linear gluing data, we base our construction on an approximation of the given gluing data. Therefore, we only obtain approximate $C^1$-smoothness at the interface. In other words, we get a jump of the normal derivative at the interface. The space $\widetilde{\mathcal{V}}^1_h$ is given as the direct sum of the subspaces $\mathcal{A}_{\circ}^{(L)}$, $\mathcal{A}_{\circ}^{(R)}$, which are the patch-interior spaces, and $\widetilde{\mathcal{A}}_{\Gamma}$, which is the interface space. The patch interior spaces are standard isogeometric spaces which have vanishing function value and vanishing gradient at the interface. The interface space is composed of functions that span traces as well as functions that span (approximate) normal derivatives at the interface.

Since the construction of the interface space is based on approximated, nonlinear gluing data, the space $\widetilde{\mathcal{A}}_{\Gamma}$ is of higher polynomial degree and lower regularity locally near the interface, as stated in~\eqref{eq::spaceforbasisfunctions}. As a consequence the approximately $C^1$-smooth spaces $\{\widetilde{\mathcal{V}}^1_{h}\}_h$ are not nested, even though the underlying family of $C^0$-smooth spaces $\{{\mathcal{V}}^0_{h}\}_h$ is refined by knot insertion and therefore nested. The advantage with using the approximated gluing data compared to~\cite{kapl2017dimension} is that we allow geometries that are not necessarily AS-$G^1$ geometries. Furthermore, we show that by allowing non-nested spaces, splines of maximum regularity can be used away from the interface. In contrast, the standard construction over AS-$G^1$ parametrizations requires $r \leq p-2$.

In the future, we want to extend the construction of the approximately $C^1$-smooth spaces to multi-patch domains. A possible approach is to introduce vertex spaces by interpolation, as in~\cite{kapl2019argyris}, thus enforcing $C^2$ super-smoothness at all vertices. Another challenge is to obtain a construction that results in nested spaces. This may be done by avoiding spline spaces of locally higher degree and reduced regularity. Moreover, a construction with uniform degree $p$ everywhere allows the use of a standard Gaussian quadrature rule in all elements, whereas the construction proposed in this paper requires a quadrature rule of higher order in all elements neighboring the interface. Furthermore, due to the non-standard structure of the space near the interface, a complete numerical analysis of the proposed approach is beyond the scope of this paper. An extension of the construction to surfaces is also of practical relevance, since for many applications, a given surface geometry is only smooth up to some prescribed tolerance. Hence, the $C^1$-smoothness of functions defined on such surfaces may also be imposed only approximately.

\section*{Acknowledgments}

Both authors are supported by the Austrian Science Fund (FWF) and the government of Upper Austria through the project P~30926-NBL entitled ``Weak and approximate $C^1$-smoothness in isogeometric analysis''. Moreover, Thomas Takacs is partially supported by the Linz Institute of Technology (LIT) and the government of Upper Austria through the project LIT-2019-8-SEE-116 entitled ``PARTITION – PDE-aware isogeometric discretization based on neural networks''. All support is gratefully acknowledged.


\begin{thebibliography}{10}

\bibitem{adams2003sobolev}
R.~A. Adams and J.~J. Fournier.
\newblock {\em Sobolev Spaces}.
\newblock Elsevier, 2003.

\bibitem{apostolatos2014nitsche}
A.~Apostolatos, R.~Schmidt, R.~W{\"u}chner, and K.-U. Bletzinger.
\newblock A {Nitsche}-type formulation and comparison of the most common domain
  decomposition methods in isogeometric analysis.
\newblock {\em International Journal for Numerical Methods in Engineering},
  97(7):473--504, 2014.

\bibitem{bazilevs2006isogeometric}
Y.~Bazilevs, L.~Beirao~da Veiga, J.~A. Cottrell, T.~J. Hughes, and G.~Sangalli.
\newblock Isogeometric analysis: approximation, stability and error estimates
  for $h$-refined meshes.
\newblock {\em Mathematical Models and Methods in Applied Sciences},
  16(07):1031--1090, 2006.

\bibitem{beiraodaveiga2014mathematical}
L.~Beirao~da Veiga, A.~Buffa, G.~Sangalli, and R.~V{\'a}zquez.
\newblock Mathematical analysis of variational isogeometric methods.
\newblock {\em Acta Numerica}, 23:157--287, 2014.

\bibitem{BeiraodaVeiga2012anisotropic}
L.~Beirao~da Veiga, D.~Cho, and G.~Sangalli.
\newblock Anisotropic {NURBS} approximation in isogeometric analysis.
\newblock {\em Computer Methods in Applied Mechanics and Engineering},
  209:1--11, 2012.

\bibitem{brivadis2015isogeometric}
E.~Brivadis, A.~Buffa, B.~Wohlmuth, and L.~Wunderlich.
\newblock Isogeometric mortar methods.
\newblock {\em Computer Methods in Applied Mechanics and Engineering},
  284:292--319, 2015.

\bibitem{buchegger2016adaptively}
F.~Buchegger, B.~J{\"u}ttler, and A.~Mantzaflaris.
\newblock Adaptively refined multi-patch {B}-splines with enhanced smoothness.
\newblock {\em Applied Mathematics and Computation}, 272:159--172, 2016.

\bibitem{chan2018isogeometric}
C.~L. Chan, C.~Anitescu, and T.~Rabczuk.
\newblock Isogeometric analysis with strong multipatch {$C^1$}-coupling.
\newblock {\em Computer Aided Geometric Design}, 62:294--310, 2018.

\bibitem{chan2019strong}
C.~L. Chan, C.~Anitescu, and T.~Rabczuk.
\newblock Strong multipatch {$C^1$}-coupling for isogeometric analysis on {2D}
  and {3D} domains.
\newblock {\em Computer Methods in Applied Mechanics and Engineering},
  357:112599, 2019.

\bibitem{collin2016analysis}
A.~Collin, G.~Sangalli, and T.~Takacs.
\newblock Analysis-suitable {$G^1$} multi-patch parametrizations for {$C^1$}
  isogeometric spaces.
\newblock {\em Computer Aided Geometric Design}, 47:93--113, 2016.

\bibitem{CottrellBook}
J.~A. Cottrell, T.~J.~R. Hughes, and Y.~Bazilevs.
\newblock {\em Isogeometric Analysis: Toward Integration of {CAD} and {FEA}}.
\newblock John Wiley \& Sons, Chichester, England, 2009.

\bibitem{GrPe15}
D.~Groisser and J.~Peters.
\newblock Matched {$G^1$}-constructions always yield {$C^k$}-continuous
  isogeometric elements.
\newblock {\em Computer Aided Geometric Design}, 34:67 -- 72, 2015.

\bibitem{guo2015nitsche}
Y.~Guo and M.~Ruess.
\newblock Nitsche's method for a coupling of isogeometric thin shells and
  blended shell structures.
\newblock {\em Computer Methods in Applied Mechanics and Engineering},
  284:881--905, 2015.

\bibitem{horger2019hybrid}
T.~Horger, A.~Reali, B.~Wohlmuth, and L.~Wunderlich.
\newblock A hybrid isogeometric approach on multi-patches with applications to
  {Kirchhoff} plates and eigenvalue problems.
\newblock {\em Computer Methods in Applied Mechanics and Engineering},
  348:396--408, 2019.

\bibitem{hna2021}
T.~J. Hughes, G.~Sangalli, T.~Takacs, and D.~Toshniwal.
\newblock Chapter 8 - {Smooth multi-patch discretizations in Isogeometric
  Analysis}.
\newblock In A.~Bonito and R.~H. Nochetto, editors, {\em Geometric Partial
  Differential Equations - Part II}, volume~22 of {\em Handbook of Numerical
  Analysis}, pages 467--543. Elsevier, 2021.

\bibitem{hughes2005isogeometric}
T.~J.~R. Hughes, J.~A. Cottrell, and Y.~Bazilevs.
\newblock Isogeometric analysis: {CAD}, finite elements, {NURBS}, exact
  geometry and mesh refinement.
\newblock {\em Computer methods in applied mechanics and engineering},
  194(39-41):4135--4195, 2005.

\bibitem{kapl2017isogeometric}
M.~Kapl, F.~Buchegger, M.~Bercovier, and B.~J{\"u}ttler.
\newblock Isogeometric analysis with geometrically continuous functions on
  planar multi-patch geometries.
\newblock {\em Computer Methods in Applied Mechanics and Engineering},
  316:209--234, 2017.

\bibitem{kapl2017dimension}
M.~Kapl, G.~Sangalli, and T.~Takacs.
\newblock Dimension and basis construction for analysis-suitable {$G^1$}
  two-patch parameterizations.
\newblock {\em Computer Aided Geometric Design}, 52:75--89, 2017.

\bibitem{kapl2019isogeometric}
M.~Kapl, G.~Sangalli, and T.~Takacs.
\newblock Isogeometric analysis with {$C^1$} functions on planar, unstructured
  quadrilateral meshes.
\newblock {\em The SMAI Journal of Computational Mathematics}, S5:67--86, 2019.

\bibitem{kapl2019argyris}
M.~Kapl, G.~Sangalli, and T.~Takacs.
\newblock An isogeometric {$C^1$} subspace on unstructured multi-patch planar
  domains.
\newblock {\em Computer Aided Geometric Design}, 69:55--75, 2019.

\bibitem{kapl2020family}
M.~Kapl, G.~Sangalli, and T.~Takacs.
\newblock A family of {$C^1$} quadrilateral finite elements.
\newblock {\em arXiv preprint arXiv:2005.04251}, 2020.

\bibitem{KaViJu15}
M.~Kapl, V.~Vitrih, B.~J\"uttler, and K.~Birner.
\newblock Isogeometric analysis with geometrically continuous functions on
  two-patch geometries.
\newblock {\em Computers and Mathematics with Applications}, 70(7):1518 --
  1538, 2015.

\bibitem{karvciauskas2016generalizing}
K.~Kar{\v{c}}iauskas, T.~Nguyen, and J.~Peters.
\newblock Generalizing bicubic splines for modeling and {IGA} with irregular
  layout.
\newblock {\em Computer-Aided Design}, 70:23--35, 2016.

\bibitem{kiendl2010bending}
J.~Kiendl, Y.~Bazilevs, M.-C. Hsu, R.~W{\"u}chner, and K.-U. Bletzinger.
\newblock The bending strip method for isogeometric analysis of
  {Kirchhoff--Love} shell structures comprised of multiple patches.
\newblock {\em Computer Methods in Applied Mechanics and Engineering},
  199(37-40):2403--2416, 2010.

\bibitem{kiendl2009isogeometric}
J.~Kiendl, K.-U. Bletzinger, J.~Linhard, and R.~W{\"u}chner.
\newblock Isogeometric shell analysis with {Kirchhoff--Love} elements.
\newblock {\em Computer Methods in Applied Mechanics and Engineering},
  198(49-52):3902--3914, 2009.

\bibitem{gismoweb}
A.~Mantzaflaris, P.~Weinm{\"u}ller, and others~(see website).
\newblock G+{S}mo ({G}eometry plus {S}imulation modules) v0.8.1.
\newblock http://github.com/gismo, 2018.

\bibitem{marussig2018review}
B.~Marussig and T.~J.~R. Hughes.
\newblock A review of trimming in isogeometric analysis: challenges, data
  exchange and simulation aspects.
\newblock {\em Archives of computational methods in engineering},
  25(4):1059--1127, 2018.

\bibitem{mourrain_dimension_2016}
B.~Mourrain, R.~Vidunas, and N.~Villamizar.
\newblock Dimension and bases for geometrically continuous splines on surfaces
  of arbitrary topology.
\newblock 45:108--133, 2016.

\bibitem{nguyen2014comparative}
T.~Nguyen, K.~Kar{\v{c}}iauskas, and J.~Peters.
\newblock A comparative study of several classical, discrete differential and
  isogeometric methods for solving {Poisson's} equation on the disk.
\newblock {\em Axioms}, 3(2):280--299, 2014.

\bibitem{Pe02}
J.~Peters.
\newblock Geometric continuity.
\newblock In {\em Handbook of computer aided geometric design}, pages 193--227.
  North-Holland, Amsterdam, 2002.

\bibitem{prautzsch2002}
H.~Prautzsch, W.~B\"ohm, and M.~Paluszny.
\newblock {\em {B}\'ezier and {B}-Spline {T}echniques}.
\newblock Springer-Verlag, 2002.

\bibitem{ruess2014weak}
M.~Ruess, D.~Schillinger, A.~I. Oezcan, and E.~Rank.
\newblock Weak coupling for isogeometric analysis of non-matching and trimmed
  multi-patch geometries.
\newblock {\em Computer Methods in Applied Mechanics and Engineering},
  269:46--71, 2014.

\bibitem{sangalli2016unstructured}
G.~Sangalli, T.~Takacs, and R.~V{\'a}zquez.
\newblock Unstructured spline spaces for isogeometric analysis based on spline
  manifolds.
\newblock {\em Computer Aided Geometric Design}, 47:61--82, 2016.

\bibitem{schumaker_2007}
L.~Schumaker.
\newblock {\em Spline Functions: Basic Theory}.
\newblock Cambridge Mathematical Library. Cambridge University Press, 3
  edition, 2007.

\bibitem{scott2014isogeometric}
M.~A. Scott, D.~C. Thomas, and E.~J. Evans.
\newblock Isogeometric spline forests.
\newblock {\em Computer Methods in Applied Mechanics and Engineering},
  269:222--264, 2014.

\end{thebibliography}
\end{document}